\newtheorem{thm}{Theorem}[section] 
\newtheorem{lemma}[thm]{Lemma}
\newtheorem{prop}[thm]{Proposition}
\newtheorem{proposition}[thm]{Proposition}
\newtheorem{definition}[thm]{Definition}
\newtheorem{question}[thm]{Question}
\newtheorem{corollary}[thm]{Corollary}
\theoremstyle{definition}
\newtheorem{remark}[thm]{Remark}
 \newtheorem{notation}[thm]{Notation}
  \newtheorem{definition-remark}[thm]{Definition-Remark}
  \newtheorem{example}[thm]{Example}
\def\min{\operatorname{min}}
\def\max{\operatorname{max}}
\def\c1{\operatorname{c_1}}
\def\c2{\operatorname{c_2}}
\def\Sym{\operatorname{Sym}}
\def\ZZ{{\mathbb Z}}
\def\PP{{\mathbb P}}
\def\A{{\mathcal A}}
\def\SS{{\mathcal S}}
\def\G{{\mathcal G}}
\def\M{{\mathcal M}}
\def\O{{\mathcal O}}
\def\I{{\mathcal J}}
\def\E{{\mathcal E}}
\def\T{{\mathcal T}}
\def\F{{\mathcal F}}
\def\C{{\mathcal C}} 
\def\Q{{\mathcal Q}}
\def\c{\mathfrak{c}}
\def\x{\times}                   
\def\cong{\simeq}
\def\+{\oplus}               
\def\*{\otimes}                  
\def\mod{\operatorname{mod}}
\def\Pic{\operatorname{Pic}}
\def\Num{\operatorname{Num}}
\renewcommand{\leq}{\leqslant}
\renewcommand{\geq}{\geqslant}
\begin{document}

\title[Moduli spaces of polarized Enriques surfaces]
{Irreducible unirational and uniruled components of moduli spaces of polarized Enriques surfaces}

\author[C.~Ciliberto]{Ciro Ciliberto}
\address{Ciro Ciliberto, Dipartimento di Matematica, Universit\`a di Roma Tor Vergata, Via della Ricerca Scientifica, 00173 Roma, Italy}
\email{cilibert@mat.uniroma2.it}

\author[T.~Dedieu]{Thomas Dedieu}
\address{Thomas Dedieu,
Institut de Math\'ematiques de Toulouse~; UMR5219.
Universit\'e de Toulouse~; CNRS.
UPS IMT, F-31062 Toulouse Cedex 9, France} 
\email{thomas.dedieu@math.univ-toulouse.fr} 

\author[C.~Galati]{Concettina Galati}
\address{Concettina Galati, Dipartimento di Matematica e Informatica, Universit\`a della \linebreak Calabria, via P. Bucci, cubo 31B, 87036 Arcavacata di Rende (CS), Italy}
\email{galati@mat.unical.it}

\author[A.~L.~Knutsen]{Andreas Leopold Knutsen}
\address{Andreas Leopold Knutsen, Department of Mathematics, University of Bergen, Postboks 7800,
5020 Bergen, Norway}
\email{andreas.knutsen@math.uib.no}



\begin{abstract}
  We \color{black} prove that infinitely many irreducible components of the moduli space of
polarized Enriques  surfaces are
unirational (resp.\ uniruled), characterizing them in terms of decompositions of the
polarization as  an effective  sum of isotropic classes. \color{black}
In particular,  this applies to components of  
arbitrarily large genus $g$ and $\phi$-invariant of the polarization.
\end{abstract}

\maketitle


\section{Introduction} \label{sec:ointro}

Let $\E$ denote the  smooth  $10$-dimensional   moduli space
parametrizing smooth Enriques surfaces over  $\mathbb C$.  
A  polarized (resp.\ numerically polarized) Enriques surface
is a pair made of an Enriques surface together with an ample linear
(resp.\ numerical) equivalence class on it.
For integers $g>1$ and $\phi>0$, let 
$\E_{g,\phi}$ (resp., $\widehat{\E}_{g,\phi}$) denote the moduli
space of polarized (resp. numerically polarized) 
Enriques surfaces $(S,H)$ (resp. $(S,[H])$) 
such that 
$H^2=2g-2$ and $\phi(H)=\phi$, where
\begin{equation} \label{eq:defphi}
\phi(H):=\min \left\{E \cdot H \; | \; E^2=0, E > 0\right\}.
\end{equation}
Thus $g$ is the arithmetic genus of all curves in the linear system
$|H|$. There is an étale double cover $\rho: \E_{g,\phi} \to
\widehat{\E}_{g,\phi}$ mapping 
the two pairs
$(S,H)$ and $(S,H+K_S)$ to $(S,[H])$.   We refer to \S \ref{sec:gen00} for more details.  

The space $\E$ is irreducible and rational, as has been shown
by Kond{\=o} \cite{Ko},
and the forgetful maps $\E_{g,\phi} \to \E$ are étale.
Nevertheless
the spaces  $\E_{g,\phi}$ and $\widehat{\E}_{g,\phi}$ are in
general reducible,
and it is an open problem to \color{black} determine their Kodaira dimensions
\color{black} (cf.~\cite{dol}).

It is known that
$\E_{3,2}$ is irreducible and rational (cf.\ \cite{Ca}), that
$\E_{4,2}$ is irreducible and rational (this is the classical case of
\emph{Enriques sextics}, cf.\ \cite[\S 3]{dol}) and that $\E_{6,3}$
is irreducible and unirational (cf.\ \cite{Ve}),
and it  has  been
conjectured that the moduli spaces of polarized Enriques surfaces are
all unirational (or at least, of negative Kodaira dimension), 
see \cite[\S 4] {dol}.
\color{black}This was disproved by Gritsenko and Hulek in the recent paper  \cite{GrHu}, where the existence of \color{black} infinitely many irreducible components of general type
of the moduli 
space of  {\it numerically}  polarized Enriques surfaces \color{black} is established\color{black}. On the
other hand, they show that  all components of  
$\widehat{\E}_{g,\phi}$  have  negative Kodaira dimension  for  $g
\leqslant 17$.

In the present paper we  improve these  results.  Our interest lies in the moduli spaces of {\it polarized}  Enriques  surfaces $\E_{g,\phi}$:  
We give  a description  of  their  irreducible components  in  terms of decompositions of the
polarization as  an effective  sum of isotropic classes and prove
their unirationality (resp.\  uniruledness)
in infinitely many cases (for arbitrarily large $g$
and $\phi$).

To  explain our results,  we introduce  some notions.  Any effective line bundle $H$
with $H^ 2\geqslant 0$ on an Enriques surface
may be written as  (cf. Corollary \ref{cor:kl++} below) 
\begin {equation}\label{eq:ssid}  H \equiv a_1E_1+\cdots+a_nE_{n}\end{equation}
(where '$\equiv$' denotes numerical equivalence), such that  all $E_i$ are effective, non--zero, \emph{isotropic} (i.e., $E_i^2=0$) and \emph{primitive} (i.e., indivisible in $\Num(S)$), all $a_i$ are positive integers, $n \leqslant 10$ and  
\begin{equation}
\begin{cases}
 \mbox{either  $n \neq 9$,  $E_i \cdot E_j=1$ for all $i \neq j$,} \\
\label{eq:int2-int} 
\mbox{or  $n \neq 10$,  $E_1 \cdot E_2=2$ and $E_i \cdot E_j=1$ for all other indices
  $i \neq j$,} \\
 \mbox{or  $E_1 \cdot E_2=E_1 \cdot E_3=2$ and $E_i \cdot E_j=1$ for all
  other indices $i \neq j$,} 
\end{cases}
\end{equation}
up to reordering indices.
We call this a {\it simple isotropic decomposition},  cf. Definition \ref{def:tutte1}.  
An expression $H \sim a_1E_1+\cdots+a_nE_{n}+\varepsilon K_S$ 
(where '$\sim$' denotes linear equivalence) with $\varepsilon \in \{0,1\}$ satisfying the same conditions is also called a  simple isotropic decomposition.

We say that two polarized (respectively, numerically polarized) Enriques surfaces $(S,H)$ and $(S',H')$ in $\E_{g,\phi}$ (resp., $(S,[H])$ and $(S, [H'])$ in $\widehat{\E}_{g,\phi}$) {\em  admit the same  simple decomposition type}  (cf. Definition \ref{def:tutte2})  if one 
 has simple isotropic decompositions 
\begin{equation}\label{eq:sdt}  H \sim a_1 E_1+\cdots +a_nE_n+\varepsilon K_S
\; \; \mbox{and} \; \; H' \sim a_1 E'_1+\cdots +a_nE'_n+\varepsilon K_{S'}, \; \;  \mbox{with} \; \; \varepsilon   \in \{0,1\} \end{equation}
\[  \mbox {(resp.}\;\; H \equiv a_1 E_1+\cdots +a_nE_n
\; \; \mbox{and} \; \; H' \equiv a_1 E'_1+\cdots +a_nE'_n)\]
such that $E_i \cdot E_j=E'_i \cdot E'_j$ for all $i \neq j$.
We call $n$ the {\em length} of the decomposition  (type). 
  
If, possibly after reordering indices, there exists $r \leq n$ such
that $a_1=\cdots=a_r$ and $E_i \cdot E_j=1$ for 
 all $1 \leq i \leq r$ and $1 \leq j \leq n$, $i\neq j$,
 then we say that $(S,H)$ and $(S',H')$
{\em admit the same simple $r$-symmetric decomposition type}.

We note that in \eqref {eq:sdt} the case $\varepsilon=1$ is only needed when
all $a_i$'s are even, otherwise one may substitute any $E_i$ having odd
coefficient with $E_i+K_S$.  Also note that a given line
bundle may admit decompositions  of different types, cf. Remark \ref{rem:notunique},  but nevertheless the property of admitting the same decomposition type is an equivalence relation on $\E_{g,\phi}$ (and $\widehat{\E}_{g,\phi})$, cf. Proposition \ref{prop:trans}.

The main result of this paper is the identification of many
unirational (resp.\ uniruled) irreducible components of $\E_{g,\phi}$.
These are components of $\E_{g,\phi}$
parametrizing precisely those pairs $(S,H)$ with $H$  admitting the
same simple decomposition type.

\begin{thm} \label{thm:dom}
  The locus of pairs $(S,H) \in \E_{g,\phi}$  admitting the same  simple decomposition type of length $n\leqslant 4$ is an irreducible, unirational component of $\E_{g,\phi}$.

 The locus of pairs $(S,H) \in \E_{g,\phi}$  admitting the same  simple decomposition type of length $5$ is an irreducible component of $\E_{g,\phi}$, which is unirational if all $E_i \cdot E_j=1$ for all $i \neq j$, and uniruled otherwise. 
\end{thm}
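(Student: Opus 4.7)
The plan is to associate to each simple decomposition type $\tau$ (of length $n \leq 5$ in our case) a parameter variety $\P_\tau$ of tuples $(S, E_1, \ldots, E_n)$ consisting of an Enriques surface together with an ordered $n$-tuple of primitive isotropic effective classes with the intersection pattern prescribed by $\tau$, equipped with the forgetful morphism
\[
\Phi_\tau: \P_\tau \longrightarrow \E_{g,\phi}, \qquad (S, E_1, \ldots, E_n) \longmapsto (S,\, a_1 E_1 + \cdots + a_n E_n + \varepsilon K_S).
\]
The image of $\Phi_\tau$ is the locus $\M_\tau$ appearing in the statement. The strategy is to show that $\P_\tau$ is irreducible and rational (resp.\ uniruled), and that $\Phi_\tau$ is generically finite with $10$-dimensional image; this will identify $\M_\tau$ as an irreducible, unirational (resp.\ uniruled) component of $\E_{g,\phi}$.

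To build $\P_\tau$ explicitly, I would use the maps from $S$ dictated by partial sums of the $E_i$, each $E_i$ being an elliptic half-pencil. For $n \leq 4$ and for the length-$5$ case where all $E_i \cdot E_j = 1$, the linear system $|E_1 + \cdots + E_n|$---whose dimension is readily computed by Riemann--Roch---realizes $S$ birationally as a surface in an explicit rational ambient (a product of $\PP^1$'s, a Segre variety, or a small rational scroll), and the parameter space of such embedded surfaces is visibly rational. The Enriques surface is then recovered as a suitable cover or residual component of the image data, and the remaining moduli vary rationally over this rational base, yielding rationality of $\P_\tau$. In the length-$5$ cases with some $E_i\cdot E_j = 2$, the analogous projective construction produces a parameter space fibered by positive-dimensional rational fibers over a rational base, so that $\P_\tau$ is uniruled but not obviously rational.

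Generic finiteness of $\Phi_\tau$ onto its image is the dimension-counting step: on a fixed Enriques surface, the set of simple isotropic decompositions of a given $H$ of type $\tau$ is a finite union of Weyl-group orbits (cf.\ the material recalled in the earlier sections and Proposition~\ref{prop:trans}), hence $\dim \M_\tau = \dim \P_\tau$. Together with the trivial bound $\dim \M_\tau \leq \dim \E_{g,\phi} = 10$, this forces $\dim \M_\tau = 10$, so that $\M_\tau$ is a component. Irreducibility of $\M_\tau$ then follows from irreducibility of $\P_\tau$ combined with Proposition~\ref{prop:trans}, which guarantees that two tuples realizing the same polarized pair fit into a single irreducible family.

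The main obstacle is the explicit case-by-case construction of $\P_\tau$, particularly in the length-$5$ non-symmetric cases (where $E_i\cdot E_j = 2$ for one or two pairs): there the projective model of $S$ is less rigid, and one needs to identify a rational base together with a rational fibration parametrizing the Enriques surfaces, so as to produce a genuine uniruling of $\M_\tau$. The length $\leq 4$ cases and the symmetric length-$5$ case instead should rely on reasonably classical projective-geometric constructions attached to configurations of elliptic pencils on Enriques surfaces, set up uniformly for arbitrary coefficient vectors $(a_i)$ to obtain the desired rationality of $\P_\tau$.
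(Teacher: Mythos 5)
Your overall strategy --- parametrize pairs $(S,H)$ of the given type by an irreducible rational (resp.\ uniruled) incidence variety of surfaces with marked isotropic classes, then show the forgetful map is generically finite and dominant onto the locus --- is indeed the skeleton of the paper's argument. But as written the proposal has two genuine gaps, and the route you suggest for the first one would not work uniformly. First, the choice of projective model: the system $|E_1+\cdots+E_n|$ is useless for $n\leqslant 2$ (for $n=1$ it is a half-pencil and for $n=2$ a genus-$2$ polarization, neither of which gives a birational model in a rational ambient), and for $n=3,4$ with some $E_i\cdot E_j=2$ it varies with the type. The paper instead fixes once and for all the classical sextic model: it \emph{pads} every decomposition so as to contain an isotropic $3$-sequence $\{E_1,E_2,E_3\}$ with mutual intersections $1$ (allowing zero coefficients), maps $S$ to an Enriques sextic in $\PP^3$ singular along a tetrahedron via $|E_1+E_2+E_3|$, and realizes the remaining isotropic summands as elliptic cubics, quartics or quintics meeting the edges with prescribed multiplicities. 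The possibility of this padding for an \emph{arbitrary} simple isotropic set is precisely Proposition~\ref{prop:kl+} and Corollary~\ref{cor:kl+} (extension to a maximal simple isotropic set), a nontrivial lattice-theoretic statement proved via Lemmas~\ref{lemma:ext1}--\ref{lemma:ext2}. Without it you cannot conclude that your map hits \emph{every} $(S,H)$ of the given type, only those whose marked classes happen to contain a suitable $3$-sequence; this is exactly the surjectivity step in Propositions~\ref{prop:dom} and~\ref{prop:dom2}, and your appeal to Proposition~\ref{prop:trans} does not substitute for it.

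Second, the irreducibility and rationality (resp.\ uniruledness) of the parameter space is the actual mathematical content of the theorem, and you flag it yourself as ``the main obstacle'' rather than proving it. In the paper this is carried by Lemmas~\ref{lemma:irr+solouna}--\ref{lemma:irr+solouna3}: one must show, e.g., that the family of quartic elliptic curves meeting the six edges of the tetrahedron with multiplicities $(2,2,1,1,1,1)$ is irreducible and rational (via the uniqueness of the quartic elliptic curve through the corresponding $8$ points), compute the exact dimension of the linear system of Enriques sextics containing a given such curve or pair of curves (so that the incidence variety has dimension $13=\dim\mathcal S$ and the projection to the $\PP^{13}$ of sextics is dominant and generically finite), and in the length-$5$ cases with an intersection number equal to $2$ run a monodromy argument to get irreducibility of a family that is only fibered in linear systems, whence uniruledness rather than rationality. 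None of these steps is routine, and the dichotomy unirational/uniruled in the length-$5$ statement is invisible in your sketch until these computations are done.
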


\begin{thm} \label{thm:dom2}
  The locus of pairs $(S,H) \in \E_{g,\phi}$  admitting the same  simple $7$-symmetric (respectively, $6$-symmetric) decomposition type is an irreducible, unirational 
(resp., uniruled) component of $\E_{g,\phi}$.
\end{thm}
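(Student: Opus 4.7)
My plan has two movements: first, to show that the locus is an irreducible component, and second, to exhibit a suitable rational (resp.\ ruled) parametrisation. For the component structure, I would follow the general strategy developed earlier in the paper, appealing to Proposition~\ref{prop:trans} to conclude that the locus of pairs sharing a fixed simple decomposition type is closed and irreducible in $\E_{g,\phi}$. Non-emptiness comes from the existence of non-degenerate isotropic $10$-sequences on every Enriques surface (the Cossec--Dolgachev theory), from which the required $r$-symmetric sub-configurations with $r\in\{6,7\}$ are extracted. Since the forgetful map $\E_{g,\phi}\to\E$ is étale, the locus has dimension $10=\dim\E$, and being closed, irreducible and of full dimension it is an entire irreducible component.

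For the birational analysis I would construct a dominant rational map from a rational (resp.\ ruled) variety onto the locus. The natural candidate is the incidence variety
\[
\mathcal{I}_r = \bigl\{(S,E_1,\ldots,E_n)\colon (S,H)\in \text{locus and }(E_i)\text{ realises the decomposition}\bigr\}
\]
quotiented by the symmetric group permuting the $r$ symmetric classes. Since the decomposition is essentially unique up to this symmetry (cf.\ Remark~\ref{rem:notunique}), the projection $\mathcal{I}_r\to\E_{g,\phi}$ is generically finite, and it is enough to produce rational (resp.\ ruled) parametrisations of~$\mathcal{I}_r$. Two geometric models come to mind: (a) single out one~$E_i$ to obtain a genus-one fibration on~$S$, with the other~$E_j$ appearing as bisections via $E_i\cdot E_j=1$, and then use a Weierstrass-style parametrisation of elliptic Enriques surfaces together with marked bisection classes; or (b) exploit $|H|$ directly to realise $(S,H)$ as an explicit projective model, in the spirit of Verra-type double covers or determinantal constructions, whose moduli is more transparent.

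The principal obstacle is to pin down the precise birational type of~$\mathcal{I}_r$: rational for $r=7$ versus only ruled for $r=6$. The explanation should be a concrete dimensional mechanism in the parameter space, where one extra free parameter in the $r=6$ case yields a $\PP^1$-ruling but no rational section. Carrying this through will require (i) verifying dominance and matching dimensions to~$10$, (ii) producing an explicit rational (resp.\ ruled) description of the chosen geometric model, and (iii) certifying that the birational type drops from rational to ruled precisely at the transition $r=7\to r=6$, rather than at some other value; part (iii) is where I expect the bulk of the technical work to lie, as it is the point most sensitive to the fine combinatorics of the decomposition.
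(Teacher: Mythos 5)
Your proposal has two genuine gaps. First, the irreducibility of the locus cannot be extracted from Proposition~\ref{prop:trans}: that proposition only shows that ``admitting the same simple decomposition type'' is an equivalence relation, and says nothing about the equivalence classes being closed or irreducible in $\E_{g,\phi}$. Whether these loci are irreducible components is precisely the open Question~\ref{q:desccomp} for general decomposition types (Proposition~\ref{prop:desccomp} settles it only for \emph{numerical} polarizations, via the Gritsenko--Hulek description). The paper obtains irreducibility as a \emph{consequence} of the parametrization: the locus is the image of the irreducible incidence variety $\G_2$ (resp.\ $\G_{02}$) under the dominant map $h_{\alpha_0,\ldots,\alpha_4;\varepsilon}$ (resp.\ $h^0_{\alpha_0,\ldots,\alpha_5;\varepsilon}$) of Proposition~\ref{prop:dom2}, so the ``component'' part and the ``unirational/uniruled'' part cannot be separated into two independent movements. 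Also, your appeal to Remark~\ref{rem:notunique} is backwards: that remark shows decompositions are \emph{not} unique, which is an obstacle your finiteness claim for $\mathcal{I}_r\to\E_{g,\phi}$ would have to confront rather than a fact it can rest on.

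Second, your incidence variety marking all $n$ isotropic classes individually misses the key mechanism that makes the $6$- and $7$-symmetric cases tractable. The paper does not parametrize the symmetric block at all: by Lemma~\ref{lemma:ceraprima}, once $\{E_1,E_2,E_3\}$ and a class $F$ with $F\cdot E_1=F\cdot E_2=2$, $F\cdot E_3=1$ are fixed, the divisor $E_1+E_2+F$ has square $10$ and $\phi$-invariant $3$, so $3(E_1+E_2+F)\sim E_1+\cdots+E_{10}$ and the sum $E_4+\cdots+E_{10}\sim 2E_1+2E_2+3F-E_3$ is a well-defined line bundle; hence a $7$-symmetric polarization is determined by an Enriques sextic together with a single quintic elliptic curve $F\in\F_2$, and $\G_2$ is rational (the $6$-symmetric case likewise needs only a pair $(F,F')\in\F_{02}$). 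This also explains the rational-versus-ruled dichotomy, which is not your conjectured ``extra free parameter'': $\F_2$ is rational, whereas $\F_{02}$ fibres over $\F_1\times(\PP^3)^{\vee}$ with fibre a union of six pencils of plane cubics permuted by monodromy, which yields irreducibility and uniruledness but not rationality (Lemmas~\ref{lemma:irr+solouna} and \ref{lemma:irr+solouna3}). Finally, to put an arbitrary $H$ of the given type into this normal form one needs Corollary~\ref{cor:kl+} (extension of simple isotropic sets to maximal ones), which your outline does not supply.
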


We  stress  that there are line bundles satisfying the assumptions of
these statements with arbitrarily large  $g$ and $\phi$. 
Moreover, there are decomposition types of all possible lengths $1\leq
n \leq 10$ to which these results apply.
For small values of 
$g$ or $\phi$ they actually provide all irreducible components of
$\E_{g,\phi}$, as stated in the following corollaries:

\begin{corollary} \label{cor:main1}
  When $\phi \leqslant 4$ the different  irreducible  components of $\E_{g,\phi}$ are precisely the loci  parametrizing pairs $(S,H)$ admitting the same  simple decomposition type   and they are all unirational. 
\end{corollary}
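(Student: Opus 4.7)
The plan is to deduce the corollary from Theorem \ref{thm:dom} by a numerical analysis that, under $\phi \leq 4$, constrains the possible simple isotropic decompositions to those that Theorem \ref{thm:dom} treats as unirational. By Corollary \ref{cor:kl++}, every $(S,H) \in \E_{g,\phi}$ admits a simple isotropic decomposition $H \equiv a_1 E_1 + \cdots + a_n E_n$, and by Proposition \ref{prop:trans} the decomposition-type loci partition $\E_{g,\phi}$. Theorem \ref{thm:dom} already identifies as irreducible unirational components the loci of types of length $n \leq 4$ and of length $n=5$ with all $E_i \cdot E_j = 1$; thus the substantive content of the corollary is that, for $\phi \leq 4$, no other type can occur.

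For the numerical step I would arrange the decomposition so that some index $i_0$ satisfies $E_{i_0} \cdot H = \phi$; this is possible by the construction underlying Corollary \ref{cor:kl++}. Since all $a_j \geq 1$, the identity $E_{i_0} \cdot H = \sum_j a_j (E_{i_0} \cdot E_j)$ yields lower bounds for $E_{i_0} \cdot H$ in each of the three patterns of \eqref{eq:int2-int}. In pattern (A), $E_{i_0}\cdot H = \sum_{j \neq i_0} a_j \geq n-1$, giving $n \leq \phi+1 \leq 5$, with equality $n=5$ forcing $a_j = 1$ for all $j \neq i_0$. In pattern (B) with $i_0 \in \{1,2\}$, $E_{i_0}\cdot H \geq 2 + (n-2) = n$, hence $n \leq \phi \leq 4$. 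In pattern (C) with $i_0 = 1$, $E_1\cdot H \geq 4+(n-3) = n+1$, hence $n \leq 3$. In each of these situations Theorem \ref{thm:dom} yields the desired conclusion.

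The main obstacle is the residual subcase of pattern (B) in which the $\phi$-minimizer cannot be placed in the distinguished pair $\{E_1,E_2\}$ with $E_1 \cdot E_2 = 2$: then only the weaker bound $E_{i_0}\cdot H \geq n-1$ applies, so $n = 5$ with all $a_i = 1$ remains possible a priori, and Theorem \ref{thm:dom} would give only uniruledness. To close this gap I would show by an exchange argument --- exploiting the non-uniqueness of simple decompositions (Remark \ref{rem:notunique}) and the lattice structure of isotropic configurations available on the Enriques surface $S$ --- that any such $(S,H)$ also admits a simple decomposition of pattern (A) of length $\leq 5$, so that the irreducible component containing it is in fact described by a pattern-(A) type and is therefore unirational by Theorem \ref{thm:dom}. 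This replacement of pattern-(B) length-$5$ configurations by pattern-(A) ones is the main technical content of the argument.
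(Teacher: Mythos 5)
Your overall strategy --- bounding the length $n$ of a simple isotropic decomposition by intersecting $H$ with a summand computing $\phi(H)$ and then invoking Theorem \ref{thm:dom} --- is genuinely different from the paper's proof, which simply reads off from the complete classification in Lemma \ref{lemma:decomp} that every class with $\phi\leqslant 4$ admits a decomposition of length $\leqslant 4$, with the single exception of the pattern-(A) length-$5$ type $\frac{g-7}{4}E_1+E_2+E_3+E_4+E_5$. Your numerical bounds are correct in the cases you treat, but the argument has a genuine gap exactly where you place ``the main technical content'', and the fix you propose there cannot work.

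The residual case is a length-$5$ decomposition of pattern (B) (or, which you do not discuss, of pattern (C) with the minimizer at an index $i_0\geqslant 4$) in which the summand computing $\phi$ lies outside the distinguished pair; you propose to show that such an $H$ also admits a pattern-(A) decomposition. This is false. Take, in Notation \ref{not:int}, $H\equiv E_{1,2}+E_1+E_3+E_4+2E_5$: this is a simple isotropic decomposition of length $5$ of pattern (B), with $g=16$ and $\phi(H)=E_5\cdot H=4$. Using $E_{1,2}+E_1\equiv E_5+E_{2,5}$ (Lemma \ref{lemma:ceraprima}) one finds $H\equiv 3E_5+E_3+E_4+E_{2,5}$, i.e.\ $H$ has the type $3E_1+E_2+E_3+E_{1,4}$ of the component $\widehat{\E}_{16,4}^{(II)}$, which by Lemma \ref{lemma:decomp} is the \emph{only} type $H$ admits; in particular $H$ lies in a component distinct from the pattern-(A) component $\widehat{\E}_{16,4}^{(I)}$ (type $3E_1+3E_2+E_3$) and admits no pattern-(A) decomposition at all. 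What is true, and what rescues the argument, is that every such residual configuration can be \emph{shortened} to a decomposition of length $\leqslant 4$ (of whatever pattern), which is then covered by the unirational part of Theorem \ref{thm:dom}; this is precisely what the classification in Lemma \ref{lemma:decomp} accomplishes, using the relations of Lemma \ref{lemma:ceraprima} rather than an exchange within pattern (A). A smaller point: the assertion that the $\phi$-minimizer can always be taken among the summands of the decomposition also needs justification --- by Remark \ref{rem:calcphi} it may a priori be a member of the ambient maximal simple isotropic set occurring with coefficient zero --- although one checks this only happens when $n\leqslant 3$, so that step is repairable.
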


\begin{corollary} \label{cor:main2}
  When $g \leqslant 20$ the different  irreducible  components of $\E_{g,\phi}$ are precisely the loci  parametrizing pairs $(S,H)$  admitting the same simple decomposition 
type.  Moreover, they are all unirational, except possibly $\E_{16,5}$
and $\E_{17,5}$, which are 
in any event
irreducible and uniruled. 
\end{corollary}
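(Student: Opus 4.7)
The strategy is to exploit the bound $H^2 = 2g-2 \leq 38$ to reduce Corollary \ref{cor:main2} to a finite case analysis on the length $n$ of a simple isotropic decomposition of $H$, invoking Theorems \ref{thm:dom} and \ref{thm:dom2} at each step. Every $(S, H) \in \E_{g, \phi}$ admits such a decomposition $H \sim a_1 E_1 + \cdots + a_n E_n + \varepsilon K_S$ by Corollary \ref{cor:kl++}, and the inequalities $a_i \geq 1$ and $E_i \cdot E_j \geq 1$ for $i \neq j$ give $H^2 = \sum_{i \neq j} a_i a_j (E_i \cdot E_j) \geq n(n-1)$. Since $38 < 42 = 7 \cdot 6$, this forces $n \leq 6$, so every irreducible component of $\E_{g,\phi}$ for $g \leq 20$ arises from a decomposition type of length at most $6$.

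For $n \leq 4$, Theorem \ref{thm:dom} immediately yields an irreducible unirational component. For $n = 5$, Theorem \ref{thm:dom} yields an irreducible component, unirational unless some $E_i \cdot E_j = 2$; in the latter borderline case I would enumerate the finitely many tuples $(a_1, \ldots, a_5)$ with $H^2 \leq 38$ and exhibit for each an alternative simple isotropic decomposition of length $\leq 4$ realising the same pair $(S, H)$, so that the corresponding stratum is already contained in one of the unirational components produced by the $n \leq 4$ case. For $n = 6$, the constraint $H^2 \leq 38$ forces $a_i = 1$ for all $i$, and the three intersection patterns of \eqref{eq:int2-int} yield precisely the loci $\E_{16, 5}$, $\E_{17, 5}$, $\E_{18, 5}$; the first of these is $6$-symmetric, hence irreducible and uniruled by Theorem \ref{thm:dom2}, and for the third I would exhibit the alternative length-$4$ decomposition $H \sim 2E_1 + 2E_2 + E_3 + E_4$ with $E_1 \cdot E_2 = 2$, placing $\E_{18, 5}$ in a unirational component via the $n \leq 4$ case.

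The main obstacle is $\E_{17, 5}$, whose only simple isotropic decomposition is the length-$6$ type (ii) (with $E_1 \cdot E_2 = 2$ and remaining cross-products equal to $1$), which is at best $4$-symmetric and so falls outside the scope of Theorem \ref{thm:dom2}. Irreducibility reduces to a finite check that no simple isotropic decomposition of length $\leq 5$ realises $(g, \phi) = (17, 5)$: this amounts to verifying, for each of the three intersection patterns in \eqref{eq:int2-int} and each length $n \leq 5$, that the system $H^2 = 32$, $\phi(H) = 5$ admits no admissible solution in the integers $(a_1, \ldots, a_n)$. For uniruledness I would mimic the construction from the proof of Theorem \ref{thm:dom2}: the partial $4$-symmetry still provides a positive-dimensional rational family of deformations of the four equal isotropic classes $E_3, \ldots, E_6$, with the distinguished pair $(E_1, E_2)$ (of intersection $2$) held fixed, producing a rational curve through the general point of the stratum. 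The analogous (and easier) check handles the irreducibility of $\E_{16, 5}$, completing the proof.
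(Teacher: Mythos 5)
Your overall strategy (bound the length of a simple isotropic decomposition using $H^2\leq 38$, then run through Theorems \ref{thm:dom} and \ref{thm:dom2}) is sound and close in spirit to the paper's proof, which instead organizes the case analysis by $\phi\leq 6$ (via \eqref{eq:bound} and Proposition \ref{prop:bound}) and then reads off all occurring types from Lemma \ref{lemma:decomp}. The bound $n\leq 6$ and the treatment of $n\leq 4$, of the all-intersections-one length-$5$ types, and of $\E_{16,5}$ (6-symmetric, hence Theorem \ref{thm:dom2}) are all correct.

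The genuine gap is in your handling of $\E_{17,5}$, and it surfaces twice. First, in the $n=5$ step you claim that every length-$5$ type with some $E_i\cdot E_j=2$ and $H^2\leq 38$ admits an alternative decomposition of length $\leq 4$; this is false precisely in the one case where it matters, namely the type $2E_1+E_2+E_3+E_4+E_{1,5}$ with $(g,\phi)=(17,5)$ (Lemma \ref{lemma:decomp}(v), $g\equiv 2 \bmod 5$), which is the unique type for those invariants and does not reduce --- if it did, $\E_{17,5}$ would be unirational and would not need to be excepted in the statement. Second, having missed that this length-$5$ decomposition exists (you assert the ``only'' decomposition of $\E_{17,5}$ is a length-$6$ type (ii) one --- in fact it admits both, cf.\ Remark \ref{rem:notunique}), you propose to prove irreducibility and uniruledness by an ad hoc modification of the construction behind Theorem \ref{thm:dom2}, exploiting a partial $4$-symmetry. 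That sketch is not a proof: producing a rational curve through a general point of a stratum does not give irreducibility of the stratum, and the theorems as stated do not cover a merely $4$-symmetric length-$6$ type. The correct and much shorter route, which is the one the paper takes, is to apply the second part of Theorem \ref{thm:dom} directly to the length-$5$ decomposition $2E_1+E_2+E_3+E_4+E_{1,5}$: length-$5$ types always give an irreducible component, uniruled when some intersection equals $2$. (A smaller inaccuracy: your proposed length-$4$ substitute for the length-$6$ type (iii) at $(g,\phi)=(18,5)$ has the wrong intersection pattern; the correct reduction, via $E_1+E_2+E_{1,2}\sim E_4+E_5+E_{4,5}$, lands on the type $2E_1+2E_2+E_{1,2}+E_3$ of Lemma \ref{lemma:decomp}, but any length-$\leq 4$ type suffices for your purposes there.)
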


As a further example, our results can also be used to describe the
 irreducible components of $\E_{g,\phi}$ for the highest values of
$\phi$ with respect to $g$, cf. Corollary \ref{cor:main3}.

  We note that the  proofs of our results 
 do not rely on the  construction of $\widehat{\E}_{g,\phi}$ in   \cite{GrHu}.

  By the above results  the (equivalence class of)  simple decomposition type seems to be the correct invariant to  distinguish  all the irreducible components of the moduli space of polarized Enriques
 surfaces. 
This is indeed true for numerical polarizations: we prove in 
Proposition \ref{prop:desccomp} that
the various  irreducible  components of $\widehat{\E}_{g,\phi}$ are
precisely the loci of pairs  admitting the same  simple decomposition
type.
 We do not know if the same holds for {\it linear} polarizations in
 full generality, cf.\ Question \ref{q:desccomp}\footnote
 {This question has subsequently been answered positively in  \cite[Thm.~1.1]{kn-JMPA}. }.

As another  application we   answer
\cite[Question 4.2]{GrHu} about
the irreducibility of the  preimage by
$\rho: \E_{g,\phi} \to \widehat{\E}_{g,\phi}$ of a component of
$\widehat{\E}_{g,\phi}$
under the assumptions of Theorems \ref{thm:dom}
and \ref{thm:dom2}:

\begin{corollary} \label{cor:conn}
Let $\C \subset \E_{g,\phi}$ be an irreducible component parametrizing
classes admitting the same simple decomposition type of length
$\leqslant 5$ or being $6$-symmetric.  Then $\rho^{-1}(\rho(\C))$ is
 reducible  if and only if $\C$ parametrizes  pairs $(S,H)$ such that $H$ is
$2$-divisible in $\Num(S)$. 
\end{corollary}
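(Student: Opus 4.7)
The plan is to exploit that the covering involution of $\rho$ is the automorphism $\tau\colon(S,H)\mapsto(S,H+K_S)$, so $\rho^{-1}(\rho(\C))=\C\cup\tau(\C)$, which is reducible if and only if $\tau(\C)\neq\C$. Under the hypotheses of the corollary, Theorems~\ref{thm:dom} and~\ref{thm:dom2} identify $\C$ with the locus of pairs $(S,H)$ admitting a prescribed simple decomposition type
\[
T\colon\quad H\sim a_1E_1+\cdots+a_nE_n+\varepsilon K_S,
\]
so the question reduces to whether $(S,H+K_S)$ admits the same type~$T$ for generic $(S,H)\in\C$.

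The analysis splits on the parities of the coefficients. If some $a_{i_0}$ is odd, then using $2K_S\sim 0$ one has $a_{i_0}K_S\sim K_S$, whence
\[
H+K_S\sim a_{i_0}(E_{i_0}+K_S)+\sum_{i\neq i_0}a_iE_i+\varepsilon K_S.
\]
Since $E_{i_0}+K_S$ is effective (by Riemann--Roch), primitive isotropic, and has the same intersection numbers with the other $E_j$ as~$E_{i_0}$, this exhibits $(S,H+K_S)$ as admitting the type~$T$, so $\tau(\C)=\C$. If instead every $a_i$ is even, I would argue by contradiction: assuming $(S,H+K_S)\in\C$ yields $H+K_S\sim\sum a_iE'_i+\varepsilon K_S$ with $E'_i\cdot E'_j=E_i\cdot E_j$, and subtracting the two decompositions gives
\[
K_S\sim\sum_i a_i(E'_i-E_i),
\]
whose right-hand side is $2$-divisible in $\Pic(S)$, whereas $K_S$ is nonzero $2$-torsion in $\Pic(S)$ and therefore not $2$-divisible; the contradiction yields $\tau(\C)\neq\C$.

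It then remains to match this parity dichotomy on~$T$ with the condition stated in the corollary. The implication ``all $a_i$ even $\Rightarrow$ $H$ is $2$-divisible in $\Num(S)$'' is immediate from $H\equiv 2\sum(a_i/2)E_i$. For the converse, I would write $H\equiv 2L$ and lift~$L$ to some $\tilde L\in\Pic(S)$, with $\tilde L^2=(g-1)/2\geq 0$; Riemann--Roch on~$S$ makes one of $\tilde L$, $K_S-\tilde L$ effective, and intersection with the ample class $H$ (so $\tilde L\cdot H=g-1>0$) rules out $K_S-\tilde L$, so $\tilde L$ itself is effective. Applying Corollary~\ref{cor:kl++} to $\tilde L$ produces a simple decomposition $\tilde L\sim\sum c_iF_i+\delta K_S$, and doubling yields $H\sim\sum(2c_i)F_i+\varepsilon'K_S$, a simple decomposition of~$H$ with every coefficient even, defining a decomposition type~$T'$. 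The main obstacle I anticipate is the final reconciliation of $T'$ with the original type~$T$: the locus of pairs admitting~$T'$ is an irreducible component $\C''$ that contains the generic point of~$\C$, so by maximality $\C=\C''$; but then the even-coefficients case applied to~$T'$ forces $\tau(\C)=\tau(\C'')\neq\C''=\C$, which would contradict the first case were $T$ to contain an odd coefficient. This rules out odd coefficients in~$T$ and closes the equivalence, whence $\rho^{-1}(\rho(\C))$ is reducible exactly when $H$ is $2$-divisible in~$\Num(S)$.
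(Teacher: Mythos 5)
Your overall strategy coincides with the paper's: one writes $\rho^{-1}(\rho(\C))=\C\cup\tau(\C)$ for the covering involution $\tau\colon(S,H)\mapsto(S,H+K_S)$, shows $\tau(\C)=\C$ when some coefficient of the type is odd by replacing $E_{i_0}$ with $E_{i_0}+K_S$, and shows $\tau(\C)\neq\C$ when all coefficients are even; your argument for the latter, subtracting the two decompositions to exhibit $K_S$ as a $2$-divisible class in $\Pic(S)$, is a correct inline version of the last assertion of Lemma \ref{lemma:H2div}, which is what the paper cites at this point. All of this is sound.

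The gap is in your final reconciliation step. To finish you must know that numerical $2$-divisibility of $H$ forces \emph{the given type} $T$ to have all coefficients even. Your route constructs an auxiliary all-even type $T'$ by decomposing a square root $\tilde L$ of $H$ and doubling, and then asserts that ``the locus of pairs admitting $T'$ is an irreducible component $\C''$,'' so that $\C=\C''$ and the even-coefficient argument applies to $T'$. But Theorems \ref{thm:dom} and \ref{thm:dom2} identify such a locus with a component only for types of length $\leqslant 5$ or $6$-/$7$-symmetric, and the decomposition of $\tilde L$ furnished by Corollary \ref{cor:kl++} can have length up to $10$ with no symmetry control, so $T'$ need not satisfy these hypotheses. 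Without this, one cannot conclude that $(S,H+K_S)\in\C$ forces $H+K_S$ to admit $T'$ — whether admitting a given type always cuts out a full component of $\E_{g,\phi}$ is exactly the open Question \ref{q:desccomp} — and your contradiction does not go through. The missing ingredient is the first assertion of Lemma \ref{lemma:H2div}: $H$ is $2$-divisible in $\Num(S)$ if and only if \emph{every} simple isotropic decomposition of $H$ has all coefficients even, which the paper proves by direct intersection computations (pairing $H$ with classes such as $E_{i,j}-E_i$ in a decomposition put in the canonical form of Corollary \ref{cor:kl+}), not by a moduli-theoretic argument. Replacing your $T'$ construction by a citation of that lemma repairs the proof and reduces it to the paper's.
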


 Note that a class is $2$-divisible in $\Num(S)$ if and only if all coefficients in any simple isotropic decomposition are even, cf. Lemma \ref{lemma:H2div}.

 It is an interesting question whether this last corollary holds in general, that is, without any assumption on the decomposition types\footnote{This has subsequently been proved in  \cite[Thm.~4.2]{kn-JMPA},
   i.e.,
   Corollary~\ref{cor:conn} is valid in full generality, without any assumptions on
the decomposition types. \color{black} The proof of \cite[Thm.~4.2]{kn-JMPA} is different from our proof of Corollary~\ref{cor:conn}, but still relies on our results on simple isotropic decompositions in \S \ref{sec:bas1}.\color{black}}.

An interesting feature of our approach via simple isotropic
decompositions is that it enables one to 
write down  efficiently  the complete list
of all possible decompositions within a given numerical range.   (Note that  the datum of such a decomposition
prescribes of course the genus, but also the $\phi$-invariant,
cf.\ Remark  \ref{rem:calcphi}).  As an
illustration of our  methods we  catalogue all the irreducible
components of all  the  moduli spaces
$\widehat\E _{g,\phi}$ with $g \leq 30$ in an appendix;
for almost all of them we are able to determine the number of
corresponding irreducible components of $\E _{g,\phi}$, as well as
unirationality or uniruledness.
We do not make use of this list in
the present paper, but this information is needed in \color{black} our
paper \color{black} \cite{cdgk-mod} on moduli of curves on Enriques
surfaces. \color{black} Moreover, the approach to moduli spaces of polarized Enriques surfaces via
simple isotropic decompositions \color{black} also plays \color{black} a central role in our \color{black} subsequent \color{black} work  \cite{cdgk-sev} on Severi varieties on Enriques surfaces.

Our proofs of Theorems~\ref{thm:dom} and \ref{thm:dom2} are based on the
fact that a general Enriques surface has a model in $\PP^3$ as
an Enriques sextic, i.e.,  a
sextic surface singular along the six edges of a tetrahedron; such a
model corresponds to the datum of an isotropic sequence
$(E_1,E_2,E_3)$ with $E_i \cdot E_j=1$ for $i\neq j$, the $E_i$'s
corresponding to three edges of some face of the tetrahedron.
The idea is then to exhibit various irreducible and
rational (resp.\ uniruled) families $\mathcal{F}$ of elliptic curves in
$\PP^3$ with prescribed intersection numbers with the edges of some
fixed tetrahedron, such that a general Enriques sextic singular along
this particular tetrahedron contains a member of $\mathcal {F}$.
One thus gets incidence varieties that are irreducible and
rational (resp.\ uniruled) and dominate the corresponding components
of the moduli space  of polarized  Enriques  surfaces. This whole construction, which is very geometric in nature,  is done in \S \ref{sec:bas0}, where the proofs of our theorems and corollaries stated in this introduction are given; in  particular, Theorems \ref{thm:dom} and \ref{thm:dom2} are consequences of Propositions \ref{prop:dom} and
\ref{prop:dom2}.  Before this, in \S \ref{sec:bas1}, we prove the existence of simple isotropic decompositions  together with related technical results needed in \S \ref{sec:bas0}.


\vspace{0.3cm} 
\noindent
{\it Acknowledgements.} We thank Alessandro Verra for useful conversations on the  subject\color{black},  \color{black} Klaus Hulek for interesting correspondence about \cite{GrHu} and  \color{black} for \color{black} answering our questions
\color{black} and the referee for valuable comments. \color{black} We also acknowledge funding
from MIUR Excellence Department Project CUP E83C180 00100006 (CC),
project FOSICAV within the  EU  Horizon
2020 research and innovation programme under the Marie
Sk{\l}odowska-Curie grant agreement n.  652782 (CC, ThD),
 GNSAGA of INDAM (CC,CG), Trond Mohn
Foundation ``Pure mathematics in Norway'' (ThD, ALK) and grant n. 261756 of the Research Council of Norway (ALK).

\section{ Background results on moduli spaces} \label{sec:gen00}

 Let $\E$, $\E_{g,\phi}$ and  $\widehat{\E}_{g,\phi}$ be as in the introduction. The moduli space $\E$ is an open subset of a $10$-dimensional orthogonal modular variety, cf. \cite[{VIII \S 19-21}]{BHPV}. The moduli spaces $\E_{g,\phi}$ of polarized Enriques surfaces exist as quasi-projective varieties by \cite[Thm. 1.13]{Vie}. 

We  have   the  forgetful  map 
\begin{equation} \label{eq:forget}
\E_{g,\phi} \longrightarrow \E, 
\end{equation}
whose differential at a point $(S,H)$ is the linear map 
\begin{equation} \label{eq:forgetdiff}
 H^1(S,\E_H) \longrightarrow H^1(S,\T_S) 
\end{equation}
coming from the {\it Atiyah extension of $H$}
\[ 0 \longrightarrow \O_S \longrightarrow \E_H  \longrightarrow \T_S \longrightarrow 0, \]
by \cite[Prop. 3.3.12]{Ser}. Since $h^1(\O_S)=h^2(\O_S)=0$, the map \eqref{eq:forgetdiff} is an isomorphism, hence $\E_{g,\phi}$ is smooth and the map \eqref{eq:forget} is an \'etale cover.

The moduli spaces $\widehat{\E}_{g,\phi}$ exist by \cite{GrHu}. More precisely,
fixing an orbit  $\mathfrak{h}$ of the
action of the orthogonal group in the {\it Enriques lattice} $U \+ E_8(-1)$,
 in \cite{GrHu} the authors construct (irreducible) moduli spaces $\M^a_{\tiny{\mbox{En}},\mathfrak{h}}$ parametrizing isomorphism classes of numerically polarized Enriques surfaces $(S,[H])$ with $[H]$ in the orbit $ \mathfrak{h} \subset U \+ E_8(-1) \cong \Num(S)$ (see  \cite[Lemma VIII.15.1]{BHPV}). The spaces $\M^a_{\tiny{\mbox{En}},\mathfrak{h}}$ are open subsets of suitable orthogonal modular varieties.
Then our space $\widehat{\E}_{g,\phi}$ is the union of all $\M^a_{\tiny{\mbox{En}},\mathfrak{h}}$ where $\mathfrak{h}$ varies over all orbits 
with $\mathfrak{h}^2=2g-2$ and $\phi(\mathfrak{h})=\phi$, cf. \eqref{eq:defphi}. 
 It follows by \cite[Prop. 4.1]{GrHu} that there is an étale double cover $\rho: \E_{g,\phi} \to
\widehat{\E}_{g,\phi}$ mapping 
$(S,H)$ and $(S,H+K_S)$ to $(S,[H])$.

\section{ Background results  on line bundles on Enriques surfaces} \label{sec:gen0}

Any irreducible curve $C$ on an Enriques surface $S$ satisfies $C^2 \geqslant -2$, with equality if and only if $C$ is smooth and rational.  An Enriques surface containing such a curve is called {\it nodal}, otherwise it is called {\it unnodal}. On an unnodal Enriques surface, all divisors are nef and all divisors with positive self-intersection are ample.  It is well-known that the general Enriques surface is  unnodal, cf. references in \cite[p.~577]{cos2}.  

Recall that a divisor $E$ is said to be {\it isotropic} if $E^2=0$ and $E \not \equiv 0$. 
By Riemann-Roch, either $E$ or $-E$ is effective. It is said to be  {\it primitive}  if it is non-divisible in $\Num(S)$.  On an unnodal surface, any effective primitive isotropic divisor $E$ is represented by an irreducible curve of arithmetic genus one. 

 Let $H$ be an effective
line bundle with $H^2 >0$ and $\phi(H)$  as in \eqref {eq:defphi}. One has 
\begin{equation}
  \label{eq:bound}
  \phi(H)^2 \leqslant H^2, 
\end{equation}
by \cite[Cor. 2.7.1]{cd}, and there  are no cases satisfying $\phi(H)^2 < H^2 < \phi(H)^2 +\phi(H)-2$ by \cite[Prop. 1.4]{KL1}. Moreover \cite[Prop. 1.4]{KL1} also classifies the borderline cases as follows:

\begin{proposition} \label{prop:bound}
  Let $H$ be an effective line bundle on an Enriques surface satisfying $\phi(H)^2 \leqslant H^2 \leqslant \phi(H)^2 +\phi(H)-2$. Then one of the following occurs,  where $E_1,E_2,F$ are primitive, effective isotropic divisors satisfying $E_1 \cdot E_2=1$ and $E_1 \cdot F=E_2 \cdot F=2$:  
\begin{itemize}
\item[(i)] $H^2=\phi(H)^2$, in which case $H \equiv \frac{\phi(H)}{2}\left(E_1+F\right)$,
\item[(ii)] $H^2 =\phi(H)^2 +\phi(H)-2$, in which case,\\
\begin{inparaenum}
\item [$\bullet$] $H \sim \frac{\phi(H)-1}{2}(E_1+F)+E_2$ if $\phi(H)$ is odd, and \\
\item [$\bullet$] 
  $H \sim \frac{\phi(H)-2}{2}E_1+\frac{\phi(H)}{2}F+E_2$,
  or $H \equiv 2(E_1+E_2+F)$  \color {black}(and $\phi(H)=6$),
  \color{black} if $\phi(H)$ is even. 
\end{inparaenum}
\end{itemize}
\end{proposition}

 We recall the following from \cite[p.~122]{cd}:

\begin{definition} \label{def:rseq}
  An {\em isotropic $r$-sequence} on an Enriques surface $S$  is a sequence of isotropic effective divisors $\{E_1, \ldots, E_r\}$  such that $E_i \cdot E_j=1$ for $i \neq j$. 
\end{definition}

It is  well-known that any Enriques surface contains such sequences for every $r \leqslant 10$ \color{black} and that there are no such sequences with $r >10$ (cf. \cite[p.~175]{cd})\color{black}; moreover, by \cite[Cor. 2.5.6]{cd},  we have

\begin{proposition} \label{prop:cd}
Any isotropic $r$-sequence with $r \neq 9$ can be extended to a $10$-sequence.
\end{proposition}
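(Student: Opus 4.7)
The plan is to split the claim into two regimes, $r \leq 7$ and $r = 8$, with $r = 10$ being vacuous. For $r \leq 7$ the strategy is to prove a one-step extension lemma: given an isotropic $r$-sequence $(E_1, \ldots, E_r)$, there exists an effective isotropic divisor $E_{r+1}$ with $E_{r+1} \cdot E_i = 1$ for all $i \leq r$. Iterating gives extension to an $8$-sequence. The case $r = 8$ must be treated separately, producing the two missing classes $E_9, E_{10}$ simultaneously; the obstruction to a naive iterative argument lives precisely at $r = 9$, which is exactly why that value is excluded from the statement.

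For the one-step extension when $r \leq 7$, I would set $D := E_1 + \cdots + E_r$ (so that $D^2 = r(r-1)$ and $D \cdot E_i = r-1$) and search for the new class $E_{r+1}$ among classes of the form $aD - \sum b_i E_i$ with small rational coefficients, imposing $E_{r+1}^2 = 0$ and $E_{r+1} \cdot E_j = 1$. A direct linear-algebra computation produces a numerical candidate. To upgrade this candidate to an effective isotropic divisor I would apply Riemann--Roch to an auxiliary divisor such as $D + E_i$ (which falls into one of the borderline ranges of Proposition~\ref{prop:bound}, so that the desired class appears explicitly in the decomposition provided by that proposition), and correct the representative, if necessary, by reflections in $(-2)$-classes, checking that the reflections preserve the intersection numbers with each $E_j$ since the $E_j$'s are nef modulo such reflections.

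For the step $r = 8 \to r = 10$, I would work inside $\Num(S) \cong U \oplus E_8(-1)$. The Gram matrix of an isotropic $r$-sequence is $J_r - I_r$, with determinant $(r-1)(-1)^{r-1}$; for $r = 8$ this is $-7$ and for $r = 10$ it is $-9$, so by unimodularity of $\Num(S)$ the sum $F_1 + \cdots + F_{10}$ of any isotropic $10$-sequence is divisible by $3$, yielding a class $\Delta$ with $\Delta^2 = 10$ and $\Delta \cdot F_i = 3$. Given $(E_1, \ldots, E_8)$, the rank-$2$ orthogonal complement of $\langle E_1, \ldots, E_8 \rangle$ can be identified explicitly using unimodularity, and there I would exhibit a numerical class $\Delta$ together with a decomposition $E_9 + E_{10} \equiv 3\Delta - (E_1 + \cdots + E_8)$ satisfying the required intersection pattern. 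Finally I would upgrade $E_9$ and $E_{10}$ to effective divisors via Riemann--Roch and, if $S$ is nodal, use reflections in the nodal Weyl group to arrange $E_9 \cdot E_i = E_{10} \cdot E_i = E_9 \cdot E_{10} = 1$.

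The main obstacle is the $r=8$ step: unlike the one-step inductive argument for $r \leq 7$, one is forced to produce two new isotropic classes in a coupled manner, and the delicate point is to ensure simultaneous effectiveness and the exact intersection pattern when the Enriques surface is nodal, since individual $(-2)$-curves might a priori obstruct one of the two representatives.
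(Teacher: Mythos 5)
The paper does not actually prove this statement: it is quoted directly from \cite[Cor.~2.5.6]{cd}, so there is no internal argument to compare yours with, and I have assessed your proposal on its own terms. Your global architecture is reasonable --- iterate a one-step extension up to $r=8$, then pass from $8$ to $10$ in a single coupled step, consistently with the exclusion of $r=9$ --- but both mechanisms you propose for actually producing the new isotropic classes break down. First, every class of the form $aD-\sum b_iE_i$ with $D=E_1+\cdots+E_r$ lies in the $\QQ$-span of $E_1,\ldots,E_r$. The Gram matrix $J_r-I_r$ of an isotropic $r$-sequence is nondegenerate for $r\geq 2$, so the unique rational class in that span meeting each $E_i$ in one point is $\frac{1}{r-1}(E_1+\cdots+E_r)$, whose self-intersection is $\frac{r}{r-1}\neq 0$: no candidate of your proposed form exists, and $E_{r+1}$ necessarily has a nonzero component outside the span of the given sequence. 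Second, the auxiliary divisor $D+E_1$ is not in the borderline range of Proposition~\ref{prop:bound}: one has $(D+E_1)^2=(r-1)(r+2)$ while $\phi(D+E_1)\leq E_1\cdot(D+E_1)=r-1$, and $(r-1)(r+2)>(r-1)^2+(r-1)-2$ for every $r\geq 2$, so that proposition provides no decomposition of $D+E_1$ and cannot supply the missing class.

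The $8\to 10$ step is also not workable as written. The sublattice $\langle E_1,\ldots,E_8\rangle$ has signature $(1,7)$, so its orthogonal complement in $\Num(S)\cong U\+ E_8(-1)$ is negative definite of rank $2$; it contains no isotropic classes, and it does not contain $\Delta$ either (since $\Delta\cdot E_i=3$), so one cannot literally ``exhibit'' $\Delta$ or the decomposition $E_9+E_{10}\equiv 3\Delta-(E_1+\cdots+E_8)$ inside that complement. Finally, reflecting $E_9$ in a $(-2)$-class changes its intersection numbers with the fixed divisors $E_1,\ldots,E_8$, while applying the reflection to the whole lattice changes the sequence you started from, so the proposed correction on nodal surfaces does not preserve the required intersection pattern. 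In fact effectiveness is the easy part: an isotropic numerical class $F$ has $\chi(F)=1$, so $F$ or its negative (up to $K_S$) is effective, and positivity against an ample class decides which. The genuine content of the statement is the purely lattice-theoretic existence of the extending classes in $U\+E_8(-1)$, resting on the transitivity of the orthogonal group on isotropic sequences (\cite[Lemma~2.5.2]{cd}); that is precisely the part your sketch leaves unproved.
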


 We will  also   make use of the following result:

\begin{lemma} \label{lemma:ceraprima}
(a) Let $\{E_1,\ldots,E_{10}\}$ be an isotropic $10$-sequence. Then there exists a divisor $D$ on $S$ such that $D^2=10$, $\phi(D)=3$ and
$3D \sim E_1+\cdots+E_{10}$. Furthermore, for any $i \neq j$, we have
\begin{equation} \label{eq:10-3}
  D \sim E_i+E_j+E_{i,j},
  \; \; \mbox{with}\; \; E_{i,j}^ 2=0
  \; \; \mbox{and}\; \; 
  E_{i,j}> 0.
\end{equation} 
\color{black} Hence, in particular,
$E_i \cdot E_{i,j}=E_j \cdot E_{i,j}=2$, and
\begin{equation}
  \label{eq:nuova}
  E_{i,j} \sim \frac{1}{3}\left(E_1+\cdots+E_{10}\right)-E_i-E_j,
\end{equation}\color{black}
 $E_k \cdot E_{i,j}=1 \; \; \mbox{for} \; \; k \neq i,j,$
\color{black} and \color{black}
$E_{i,j} \cdot E_{k,l}= \begin{cases} 1, \; \mbox{if} \; \{i,j\} \cap \{k,l\} \neq \emptyset, \\
2, \; \mbox{if} \; \{i,j\} \cap \{k,l\} = \emptyset. \end{cases}$ 

(b) Any divisor $D$ on $S$ such that $D^2=10$ and $\phi(D)=3$ satisfies $3D \sim E_1+\cdots+E_{10}$, for an isotropic $10$-sequence $\{E_1,\ldots,E_{10}\}$ consisting precisely of all isotropic divisors computing $\phi(D)$ up to numerical equivalence.  Moreover, if $F$ is a divisor satisfying $F^2=0$ and $F \cdot D=4$, then $F \equiv E_{i,j}$ for some $i \neq j$, where $E_{i,j}$ is defined by \color{black} \eqref{eq:nuova}\color{black}. 
\end{lemma}

\begin{proof} (a) The existence of $D$ is \cite[Lemma 1.6.2(i)]{cos2} or \cite[Cor. 2.5.5]{cd}. Its properties are easily checked and $E_{i,j}:=D-E_i-E_j$, cf. also \cite[Lemma 1.6.2(ii)]{cos2}.

(b) The first statement follows from \cite[Cor.~2.5.5]{cd} and its
proof.  For the last one, note that $F \cdot E_i >0$ 
for $i=1,\ldots, 10$ by
\cite[Lemma~2.1]{klvan}, whence, after permuting indices if necessary,
one gets
$F \cdot (E_1+ E_2)=4$ 
and $F \cdot E_i=1$ for $i=3,\ldots,10$.
Then $F \cdot E_{1,2}=0$ and $E_3 \cdot F=E_3
\cdot E_{1,2}=1$, so that $F \equiv E_{1,2}$ by \cite[Lemma
2.1]{klvan} again.
\end{proof}

\section{ Simple,  isotropic decompositions} \label{sec:bas1}

 One of the aims of this section  is to prove the existence of simple isotropic decompositions stated in the introduction (see Corollary \ref{cor:kl++}) and prove  that the isotropic divisors
occurring in such decompositions can always be extended
to an isotropic $10$-sequence plus one of the divisors $E_{i,j}$
occurring in Lemma \ref{lemma:ceraprima}  (see Corollary \ref{cor:kl+}).  The latter  will be needed in the
proof of our main results, see the comment right after
 Proposition  \ref{prop:dom2}.  We will also deduce several results on simple isotropic decompositions, like for instance the fact that $2$-divisibility can be read off any isotropic decomposition (see Lemma \ref{lemma:H2div})  and the fact that the property of admitting the same decomposition type as defined in the introduction is an equivalence relation on $\E_{g,\phi}$ and $\widehat{\E}_{g,\phi}$ (see Proposition \ref{prop:trans}).

 We start by recalling the  following from the introduction:

\begin{definition} \label{def:tutte1}
 Let $H$ be  an effective line bundle $H$
with $H^ 2\geqslant 0$ on an Enriques surface $S$. 

$\bullet$ An expression
$H \equiv a_1E_1+\cdots+a_nE_{n}$, where all  $a_i$ are positive integers, $n \leq 10$ and all
 $E_i$ are primitive, effective, isotropic divisors is called  a {\em simple isotropic decomposition} if \eqref{eq:int2-int} is satisfied, up to reordering indices.

$\bullet$ An expression  $H \sim a_1E_1+\cdots+a_nE_{n} + \varepsilon K_S$, with $\varepsilon \in \{0,1\}$, satisfying the same conditions will also be called a simple isotropic decomposition. 

$\bullet$ The number $n$ is the {\em length of the decomposition}.

$\bullet$ The decomposition is {\em $r$-symmetric} if, possibly after reordering indices, there exists $r \leq n$ such
that $a_1=\cdots=a_r$ and $E_i \cdot E_j=1$ for all $1 \leq i \leq r$ and $1 \leq j \leq n$, $i\neq j$  (equivalently, there is a set of $r$ isotropic divisors occurring in the decomposition with the same coefficient and each having intersection $1$ with the remaining isotropic divisors in the decomposition). 
\end{definition}

\begin{example} \label{ex:symm}
  Consider, in the notation of Lemma \ref{lemma:ceraprima}, the simple isotropic decomposition $H \equiv E_{1,2}+E_1+2E_2+E_3+E_4$. This is $2$--symmetric but not $3$-symmetric. Indeed, the set $\{E_3, E_4\}$ has the property that each member occurs in the decomposition with coefficient $1$ and intersects the remaining isotropic divisors in the decomposition in one point. There is no larger such set, since $E_1 \cdot E_{1,2}=2$ and $E_2$ occurs with coefficient $2$.
\end{example}

We recall \cite[Lemma 2.12]{KL1}:

\begin{lemma} \label{lemma:KL}
  Any effective line bundle $H$
with $H^ 2\geqslant 0$ on an Enriques surface can be written as
$H \equiv a_1E_1+\cdots+a_nE_{n}$,
where all  $a_i$ are positive integers, $1 \leq n \leq 10$, and all
$E_i$ are primitive, effective, isotropic divisors satisfying one of the  following three conditions:
\begin{itemize}
\item[(i)] $E_i \cdot E_j=1$ for all $i \neq j$,
\item[(ii)] $E_1 \cdot E_2=2$ and $E_i \cdot E_j=1$ for all other indices
  $i \neq j$, or
\item[(iii)] $E_1 \cdot E_2=E_1 \cdot E_3=2$ and $E_i \cdot E_j=1$ for all
  other indices $i \neq j$.
\end{itemize}
\end{lemma}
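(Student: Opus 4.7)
The plan is to proceed by induction on $H^2 \geq 0$. The base case $H^2 = 0$ is immediate: since $H$ is effective and its numerical class lies on the isotropic cone of $\Num(S)$, we have $H \equiv a_1 E_1$ with $E_1$ primitive effective isotropic and $a_1 > 0$, so (i) holds trivially with $n = 1$.

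For the inductive step, assume $H^2 > 0$ and choose a primitive effective isotropic divisor $E_1$ computing $E_1 \cdot H = \phi(H)$. Set $H' := H - E_1$, so that $(H')^2 = H^2 - 2\phi(H)$; the bound $\phi(H)^2 \leq H^2$ of \cite[Cor.~2.7.1]{cd}, combined with the fact that $H^2$ is even, gives $(H')^2 \geq 0$. Since $-H' \cdot H = -H^2 + \phi(H) < 0$, Riemann--Roch on Enriques surfaces implies that $H'$ (or $H' + K_S$, which has the same numerical class) is effective. Applying the inductive hypothesis to $H'$ yields $H' \equiv \sum_{i=2}^{n} a_i E_i$ with $\{E_i\}_{i \geq 2}$ satisfying one of (i)--(iii); hence $H \equiv E_1 + \sum_{i=2}^{n} a_i E_i$.

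It remains to verify that the enlarged collection $\{E_1, \ldots, E_n\}$ still satisfies one of (i)--(iii) and that $n \leq 10$. Each $E_1 \cdot E_i$ is a positive integer by effectivity and pairwise non-proportionality of the primitive isotropic classes involved. The crucial constraint is the minimality of $\phi(H) = E_1 \cdot H$, namely $E_i \cdot H \geq E_1 \cdot H$ for every $i \geq 2$; expanding this gives the linear system
\[
\sum_{j \neq i} a_j \bigl(E_1 \cdot E_j - E_i \cdot E_j\bigr) \leq 0, \qquad i = 2, \ldots, n.
\]
Combined with the inductive knowledge that the $E_j$ for $j \geq 2$ already fit one of the shapes (i)--(iii), this forces $E_1 \cdot E_i \in \{1, 2\}$ for all $i \geq 2$ and sharply limits how many indices $i$ can satisfy $E_1 \cdot E_i = 2$. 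The bound $n \leq 10$ then follows by extending the configuration to an isotropic $10$-sequence via Proposition \ref{prop:cd} and Lemma \ref{lemma:ceraprima}.

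The main obstacle I anticipate is precisely this last combinatorial step. When the decomposition of $H'$ is itself of type (ii) or (iii), adjoining $E_1$ can create further intersection-$2$ pairs, potentially violating the structural constraint of (iii) that the two bad pairs share a common vertex. Handling this will require either replacing some $E_i$ by a numerically equivalent primitive effective representative, or regrouping indices and invoking reflection automorphisms of the Enriques lattice $U \+ E_8(-1)$ that permute the relevant isotropic classes. A case analysis stratified by the value of $\phi(H)$ and by the type of the inductive decomposition should complete the verification.
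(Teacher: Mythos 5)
First, a point of reference: the paper does not prove Lemma~\ref{lemma:KL} at all --- it is quoted from \cite[Lemma 2.12]{KL1} --- so there is no internal proof to compare yours against. The closest analogue inside the paper is the proof of Lemma~\ref{lemma:decomp}, which follows the strategy of \cite{KL1}: induct on $\phi(H)$, subtract a suitable multiple $kE$ of an isotropic divisor computing $\phi(H)$ so that either $\phi(H-kE)<\phi(H)$ or $H-kE$ falls into the classified borderline cases of Proposition~\ref{prop:bound}, and repair the resulting intersection patterns by explicit substitutions such as $F:=F_1+F_{1,2}-E$.

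Your attempt has a genuine gap, and it sits exactly where you say it does. The reduction itself is fine: $(H')^2=H^2-2\phi(H)\geq 0$ follows from \eqref{eq:bound} and the evenness of $H^2$, and $H'$ is effective (to rule out $K_S-H'$ effective, note an effective $D\equiv E_1-H$ would satisfy $D^2=(H')^2\geq 0$ and $D\cdot E_1=-\phi(H)<0$, impossible for two effective classes of non-negative square). But the entire content of the lemma is the final claim that $\{E_1,E_2,\ldots,E_n\}$ again realizes one of the patterns (i)--(iii) with $n\leq 10$, and this you do not prove --- you only ``anticipate'' it. Worse, the induction as set up cannot deliver it: the inductive hypothesis hands you \emph{some} decomposition of $H'$, over whose isotropic classes you have no control relative to $E_1$; if that decomposition is already of type (ii) or (iii) and $E_1$ meets one of its members with multiplicity $2$ (or meets some member with multiplicity $\geq 3$), you land in a forbidden configuration with no mechanism to re-choose the decomposition of $H'$. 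The minimality inequalities you invoke do not by themselves exclude this, and as written they are also incorrect: expanding $E_i\cdot H\geq E_1\cdot H$ produces the term $(1-a_i)E_1\cdot E_i$, whereas your sum over $j\neq i$ drops the contribution $-a_iE_1\cdot E_i$, which is precisely the dominant negative term that makes such inequalities useful. Finally, invoking Proposition~\ref{prop:cd} and Lemma~\ref{lemma:ceraprima} to get $n\leq 10$ is close to circular in the architecture of this paper, since the extension results (Proposition~\ref{prop:kl+}) are established downstream of Lemma~\ref{lemma:KL}. The missing case analysis is not a formality; it is the lemma.
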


This lemma guarantees the existence of an effective decomposition satisfying almost all the conditions of a simple isotropic decomposition; indeed, what is missing, cf. \eqref{eq:int2-int},  is the additional requirement that $n \neq 9$ in case (i) and that
$n \neq 10$ in case (ii).

\begin{definition} \label{def:simple}
  A set $\{E_1,\ldots,E_n\}$ of primitive isotropic divisors on an Enriques
surface is called a {\em simple isotropic set} if it satisfies
 one of the conditions (i)-(iii) in Lemma \ref{lemma:KL}, possibly after permuting indices.

It is called a {\em  maximal simple isotropic set} if it is of the form 
$\{E_1,\ldots, E_{10}, E_{i,j}\}$, where $\{E_1,\ldots,E_{10}\}$ is an isotropic $10$-sequence and $E_{i,j}$ is defined up to numerical equivalence as in \color{black} \eqref{eq:nuova} \color{black} for some $i \neq j$.
\end{definition}

Note that since any simple isotropic set of $n$ elements contains
members of an isotropic $(n-1)$--sequence, any simple isotropic set
contains at most $11$ elements (cf. \cite[p.~179]{cd}).
\color{black} If it contains $11$ elements, then they necessarily satisfy (iii) in Lemma \ref{lemma:KL}, possibly after permuting indices. It will follow from Proposition \ref{prop:kl+} right below (cf. the footnote) that simple isotropic
sets of $11$ elements are precisely the maximal simple isotropic sets. 
\color{black} 
Also note that by \cite[Rem. p.~584]{cos2} any maximal simple isotropic set  generates  $\Num(S)$. 

 The following is a key  result, which generalizes Proposition \ref{prop:cd}. 

\begin{proposition} \label{prop:kl+}
 Any simple isotropic set  $\I$  can be extended to a maximal  simple isotropic set.\footnote{ This includes the case of simple isotropic sets of $11$ elements, which means that such are automatically maximal.}  Furthermore, if $\I =\{F_1,\ldots,F_n\}$ with $n \leq 9$, $F_1 \cdot F_2=2$ and $F_i \cdot F_j=1$ for $\{i,j\} \neq \{1,2\}$, then $\I$ can be extended to maximal simple isotropic sets such that either of $F_1$ or $F_2$ equals $E_{i,j}$. 
\end{proposition}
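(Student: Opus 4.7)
The plan is to prove the proposition by case analysis, according to which of the conditions (i), (ii), (iii) of Lemma \ref{lemma:KL} the simple isotropic set $\I = \{F_1, \ldots, F_n\}$ satisfies. The main tools throughout will be Proposition \ref{prop:cd} (to extend isotropic sequences to $10$-sequences when possible) and Lemma \ref{lemma:ceraprima}, particularly part (b), which identifies the classes $E_{i,j}$ via intersection numbers with the canonical divisor $D$.

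In Case (i), $\I$ is itself an isotropic $n$-sequence. For $n \neq 9$, Proposition \ref{prop:cd} extends $\I$ to an isotropic $10$-sequence $\{E_1, \ldots, E_{10}\}$ (with $E_i = F_i$ for $i \leq n$), and Lemma \ref{lemma:ceraprima}(a) then shows that $\{E_1, \ldots, E_{10}, E_{1,2}\}$ is a maximal simple isotropic set. The delicate subcase is $n = 9$, since a $9$-sequence need not extend to a $10$-sequence. Here the plan is to extend the $8$-subsequence $\{F_1, \ldots, F_8\}$ to a $10$-sequence $\{F_1, \ldots, F_8, G_9, G_{10}\}$ via Proposition \ref{prop:cd}, and then to analyze the position of $F_9$ inside $\Num(S)$. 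The key computational input is that the Gram matrix of an isotropic $10$-sequence has determinant $\pm 9$, so that the sublattice $L \subset \Num(S)$ it generates has index $3$ (using that $\Num(S) \cong U \oplus E_8(-1)$ is unimodular). Therefore $F_9 \in \tfrac{1}{3} L$, and writing $F_9 = \sum_{i=1}^8 a_i F_i + b_9 G_9 + b_{10} G_{10}$ with $a_i,b_k \in \tfrac{1}{3}\ZZ$, the conditions $F_9^2 = 0$, $F_9 \cdot F_i = 1$ for $i \leq 8$, and the positivity $F_9 \cdot G_k \geq 1$ (from [klvan, Lemma~2.1], as in the proof of Lemma \ref{lemma:ceraprima}(b)) reduce to a Diophantine system whose only solutions are $F_9 \in \{G_9, G_{10}\}$ (so $\I$ extends to a $10$-sequence, reducing to the previous case) or $F_9 \equiv E_{9,10}$ of the auxiliary $10$-sequence. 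In the latter case, $\{F_1, \ldots, F_8, G_9, G_{10}, F_9\}$ is the desired maximal simple isotropic set.

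In Cases (ii) and (iii), the plan is to remove the ``bad'' element $F_1$ (or, in Case (ii), $F_2$ as well), to obtain an isotropic $(n-1)$-sequence, which is then extended to a $10$-sequence $\{E_1, \ldots, E_{10}\}$ using either Proposition \ref{prop:cd} or the $n=9$ analysis of the previous paragraph. The removed element $F$ is a primitive isotropic class with $F \cdot E_i = 2$ for $i$ in a distinguished set of size one (Case (ii)) or two (Case (iii)), and $F \cdot E_i = 1$ for the other known $E_i$. The same type of Diophantine analysis inside $\tfrac{1}{3} L$, combined with Lemma \ref{lemma:ceraprima}(b), forces $F \cdot D = 4$ and pins down $F \equiv E_{i,j}$ with $\{i,j\}$ equal to the distinguished set of indices; thus $\{E_1,\ldots,E_{10}, F\}$ is the desired maximal simple isotropic set. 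The second claim of the proposition in Case (ii) is then immediate from the symmetric roles of $F_1$ and $F_2$: performing the construction twice, once removing $F_1$ and once removing $F_2$, yields maximal simple isotropic sets in which respectively $F_1$ or $F_2$ plays the role of $E_{i,j}$. The principal technical obstacle throughout is the treatment of the $n=9$ subcase (and the analogous situation $n-1=9$ in Cases (ii), (iii)), since this is where the lack of a direct extension to a $10$-sequence forces us to carry out the lattice-theoretic computation rather than simply invoke Proposition \ref{prop:cd}.
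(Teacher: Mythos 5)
Your handling of Case (i) is essentially correct, and for the subcase $n=9$ it is a genuinely different route from the paper: you replace the paper's Lemma \ref{lemma:ext0} (which uses the $3$-divisibility of $E_1+\cdots+E_{10}$ together with a geometric bound $F\cdot E_i\leqslant 2$ obtained via Remark \ref{rem:decomp}) by a direct lattice computation in the index-$3$ sublattice spanned by a $10$-sequence. I checked that computation: writing $t_i=F_9\cdot E_i$, the condition $F_9^2=0$ becomes $(\sum t_i)^2=9\sum t_i^2$, and with $t_i=1$ for $i\leqslant 8$ the only admissible profiles are indeed $F_9\equiv G_9$, $F_9\equiv G_{10}$, or $F_9\cdot G_9=F_9\cdot G_{10}=2$, whence $F_9\equiv E_{9,10}$ by Lemma \ref{lemma:ceraprima}(b). (Minor point: you should allow $F_9\cdot G_k=0$, which is the case $F_9\equiv G_k$, rather than asserting $F_9\cdot G_k\geqslant 1$ outright.)

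The gap is in Cases (ii) and (iii) of Lemma \ref{lemma:KL}. After removing the bad element $F$ you extend the remaining sequence to a $10$-sequence by Proposition \ref{prop:cd}, but that proposition gives no control over the intersections of $F$ with the new members, and the lattice conditions do \emph{not} pin down $F\equiv E_{i,j}$: the equation $(\sum t_i)^2=9\sum t_i^2$ with all $t_i\geqslant 1$ admits, besides the $E_{i,j}$-profile $(2,2,1,\ldots,1)$, also $(2,2,2,2,2,1,1,1,1,1)$ and $(3,2,2,2,2,2,2,1,1,1)$, and these are realized by genuine effective primitive isotropic classes. Concretely, with $3D\sim E_1+\cdots+E_{10}$, the class $F:=2D-E_1-\cdots-E_5$ is effective, primitive, isotropic, has $F\cdot E_i=2$ for $i\leqslant 5$, $F\cdot E_i=1$ for $i\geqslant 6$, and $F\cdot D=5$, so $F\not\equiv E_{i,j}$ for this sequence; yet $\{F,E_1,E_2\}$ is a simple isotropic set of type (iii), and your recipe applied with the extension $\{E_1,\ldots,E_{10}\}$ of $\{E_1,E_2\}$ fails for it. What is needed is an \emph{adapted} $10$-sequence: in Case (iii) the one canonically attached via Lemma \ref{lemma:ceraprima}(b) to $F_1+F_2+F_3$, which has square $10$ and $\phi=3$ computed by all the $F_i$, $i\geqslant 2$ --- this is the paper's Lemma \ref{lemma:ext1}; in Case (ii) no such divisor is available ($(F_1+F_2+F_3)^2=8$), and one must first construct an auxiliary isotropic class $A$ with $A\cdot F_1=A\cdot F_j=1$ and $A\cdot F_2\leqslant 2$, which is the content of the paper's Lemma \ref{lemma:ext2} and requires a substantial minimality argument with several subcases. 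Relatedly, the ``furthermore'' statement is not immediate from symmetry: the Case (ii) construction only places \emph{one} of $F_1,F_2$ inside the $10$-sequence, and the paper needs the extra observation that $\{E_{1,2},E_1,E_3,\ldots,E_9,E_{1,10},E_{2,10}\}$ is again a maximal simple isotropic set in order to realize both options.
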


 We postpone the proof until the very end of the section to 
 discuss some consequences.  The first one yields the existence of simple, isotropic decompositions:

\begin{corollary}  \label{cor:kl++}
  Any effective line bundle $H$
with $H^ 2\geqslant 0$ on an Enriques surface has a simple isotropic decomposition.
\end{corollary}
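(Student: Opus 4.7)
The plan is to start from the decomposition produced by Lemma~\ref{lemma:KL}. If it already satisfies the extra restriction~\eqref{eq:int2-int} we are done; otherwise we are in one of the two exceptional situations: case~(i) with $n=9$ or case~(ii) with $n=10$. In both, the goal is to rewrite $H$ as a decomposition of type~(iii) of length at most $10$, using Proposition~\ref{prop:kl+} to enlarge the ambient isotropic set and then inserting the numerical relations of Lemma~\ref{lemma:ceraprima} as substitutions in the original decomposition.

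For case~(i) with $n=9$, Proposition~\ref{prop:kl+} extends $\{E_1,\ldots,E_9\}$ to a maximal simple isotropic set, providing in particular a $10$-sequence $\{E_1,\ldots,E_{10}\}$ together with classes $E_{10,l}$ for every $l\in\{1,\ldots,9\}$, isotropic and satisfying $E_{10}\cdot E_{10,l}=E_l\cdot E_{10,l}=2$, $E_k\cdot E_{10,l}=1$ for $k\neq 10,l$. Combining $3D\sim E_1+\cdots+E_{10}$ and $D\sim E_{10}+E_l+E_{10,l}$ from Lemma~\ref{lemma:ceraprima} yields the key substitution
\[ E_1+\cdots+E_9 \equiv 2E_{10}+3E_l+3E_{10,l}. \]
Setting $a:=\min_i a_i$ and inserting this into $H\equiv a(E_1+\cdots+E_9)+\sum_{i=1}^{9}(a_i-a)E_i$, I would choose $l$ so that at least one $E_i$ with $i\neq l$ satisfies $a_i=a$ (take $l$ to be an index \emph{not} attaining the minimum if the latter is unique, and any minimum index otherwise). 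The result is a decomposition of type~(iii), in which $E_{10,l}$ plays the role of the distinguished class, and whose length is bounded by $10$ because at least one of the original $E_i$ has acquired coefficient zero and drops out.

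For case~(ii) with $n=10$, Proposition~\ref{prop:kl+} extends the set to some $\{F_1,\ldots,F_{10},F_{i_0,j_0}\}$. As the original set has exactly one pair intersecting in $2$, necessarily $E_1=F_{i_0,j_0}$ and, up to relabeling, $E_2=F_{j_0}$, while $F_{i_0}$ is the newly added class. The analogous computation based on Lemma~\ref{lemma:ceraprima} now gives
\[ E_3+\cdots+E_{10} \equiv 3E_1+2E_2+2F_{i_0}, \]
and substituting with $a':=\min(a_3,\ldots,a_{10})$ produces a decomposition of type~(iii) in which $E_1$ intersects $E_2$ and $F_{i_0}$ each in $2$. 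At least one of $E_3,\ldots,E_{10}$ attains the minimum $a'$ and thus drops out, so the total length stays $\leq 10$.

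The main technical issue is the bookkeeping that guarantees the resulting decomposition has at most $10$ terms. In case~(ii) this is automatic from the definition of $a'$, but in case~(i) it forces the adaptive choice of the index $l$ described above; without that choice one could end up with a decomposition involving $11$ distinct isotropic classes, which the definition of a simple isotropic decomposition forbids.
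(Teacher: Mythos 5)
Your overall strategy is the same as the paper's (start from Lemma~\ref{lemma:KL}, repair the two bad cases via Proposition~\ref{prop:kl+} and the $3$-divisibility relation of Lemma~\ref{lemma:ceraprima}), and your treatment of case~(ii) with $n=10$ matches the paper's argument. But there is a genuine gap in case~(i) with $n=9$: you assert that the maximal simple isotropic set produced by Proposition~\ref{prop:kl+} ``provides in particular a $10$-sequence $\{E_1,\ldots,E_{10}\}$'' containing the original nine classes. This is exactly what can fail. Proposition~\ref{prop:cd} guarantees extendability of an isotropic $r$-sequence to a $10$-sequence only for $r\neq 9$, and there genuinely exist isotropic $9$-sequences that do not extend: for instance $\{E_1,\ldots,E_8,E_{9,10}\}$, where all pairwise intersections are $1$ but no tenth member can be added. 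In that situation the maximal simple isotropic set $\{F_1,\ldots,F_{10},F_{i,j}\}$ still contains your nine classes, but one of them is forced to be the special class $F_{i,j}$ rather than a member of the $10$-sequence, so the classes $E_{10}$ and $E_{10,l}$ you want to substitute in do not exist relative to your nine, and the relation $E_1+\cdots+E_9\equiv 2E_{10}+3E_l+3E_{10,l}$ is unavailable.

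The paper handles this by splitting case~(i) into two subcases. When the $9$-sequence does extend, it argues as you do (with the relation $3E_{9,10}+2E_9+2E_{10}\equiv E_1+\cdots+E_8$ and the minimum taken over the first eight coefficients, which makes your ``adaptive choice of $l$'' unnecessary). When it does not extend, it reorders so that $\{E_2,\ldots,E_9,E_{10},E_{11}\}$ is the $10$-sequence of the maximal set and $E_1\equiv\frac{1}{3}(E_2+\cdots+E_{11})-E_{10}-E_{11}$, and then uses the relation $3E_1+2E_{10}+2E_{11}\equiv E_2+\cdots+E_9$ with $m=\min_{2\leq i\leq 9}a_i$ to land in type~(iii). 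You need to add this second subcase (or prove separately that every isotropic $9$-sequence arising in case~(i) of Lemma~\ref{lemma:KL} extends to a $10$-sequence, which is false) for the proof to be complete.
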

 
\begin{proof}
 By Lemma \ref{lemma:KL}, we are done unless possibly if we end up in
case (i) with $n=9$ or 
in case (ii) with $n=10$. We treat these two cases separately and prove that in both cases we will find a different isotropic decomposition of $H$ satisfying condition (iii) of Lemma \ref{lemma:KL}, thus being a simple isotropic decomposition as desired. 

Assume first that $H \equiv a_1E_1+\cdots+a_{9}E_{9}$ with all $E_i \cdot E_j=1$ for $i \neq j$. Recalling Proposition \ref{prop:cd}, we divide the treatment into the two cases:
\begin{itemize}
\item[(a)] $\{E_1,\ldots,E_9\}$ can be extended to an isotropic $10$-sequence $\{E_1,\ldots,E_{10}\}$;
\item[(b)] $\{E_1,\ldots,E_9\}$ cannot be extended to an isotropic $10$-sequence $\{E_1,\ldots,E_{10}\}$.
\end{itemize}

In case (a), there is by Lemma \ref{lemma:ceraprima}(a) a primitive effective
isotropic $E_{9,10}$ such that 
$3E_{9,10}+2E_{9}+2E_{10} \equiv E_1+\cdots+E_8$.
Let $m:=\min_{1 \leq i \leq 8}\{a_i\}$. Then 
\[ H \equiv 3mE_{9,10}+(2m+a_9)E_{9}+2mE_{10} + (a_1-m)E_1+\cdots+(a_{8}-m)E_{8},\] where all $a_i-m \geq 0$ for $1 \leq i \leq 8$, at  least one being zero. Thus, the latter decomposition satisfies condition (iii) of Lemma \ref{lemma:KL}.

In case (b), then, by Proposition \ref{prop:kl+}, the set $\{E_1,\ldots,E_9\}$ can be extended to a maximal simple isotropic set $\{E_1,\ldots,E_9,E_{10},E_{11}\}$. 
This set contains an isotropic $10$-sequence by definition, which cannot contain
$\{E_1,\ldots,E_9\}$ by assumption. Possibly after reordering indices, we may thus assume that $\{E_2,\ldots,E_9,E_{10},E_{11}\}$ is an isotropic $10$-sequence, $E_1 \cdot E_{10}=E_1 \cdot E_{11}=2$ and $E_1 \equiv \frac{1}{3}(E_2+\cdots+E_{11})-E_{10}-E_{11}$, equivalently $3E_1+2E_{10}+2E_{11} \equiv E_2+\cdots+E_9$.
Let $m:=\min_{2 \leq i \leq 9}\{a_i\}$. Then 
\[ H \equiv (a_1+3m)E_1+2mE_{10}+2mE_{11} + (a_2-m)E_2+\cdots+(a_{9}-m)E_{9},\] where all $a_i-m \geq 0$ for $2 \leq i \leq 9$, at  least one being zero. Thus, the latter decomposition satisfies condition (iii) of Lemma \ref{lemma:KL}.

Assume next that $H \equiv a_1E_1+\cdots+a_{10}E_{10}$ with $E_1 \cdot E_2=2$ and $E_i \cdot E_j=1$ for all other indices
  $i \neq j$. By Proposition \ref{prop:kl+}, the set $\{E_1,\ldots,E_{10}\}$ can be extended to a maximal simple isotropic set $\{E_1,\ldots,E_{10},E_{11}\}$.
Possibly after interchanging $E_1$ and $E_2$, we may assume that $E_1 \cdot E_{11}=2$, $E_i \cdot E_{11}=1$ for $i \geq 2$ and 
$E_1 \equiv \frac{1}{3}(E_2+\cdots+E_{11})-E_2-E_{11}$, equivalently
  $3E_1+2E_2+2E_{11} \equiv E_3+\cdots+E_{10}$.
Let $m:=\min_{3 \leq i \leq 10}\{a_i\}$. Then 
\[ H \equiv (a_1+3m)E_1+(a_2+2m)E_2+2mE_{11} + (a_3-m)E_3+\cdots+(a_{10}-m)E_{10},\] where all $a_i-m \geq 0$ for $i \geq 3$, at  least one being zero. Thus, the latter decomposition satisfies condition (iii) of Lemma \ref{lemma:KL}.
\end{proof}

The next result yields a ``canonical'' way of writing any simple isotropic decomposition in $\Pic(S)$, which will be central in our proofs.

\begin{corollary} \label{cor:kl+}
  Let $H$ be any effective divisor on an Enriques surface such that $H^2 >0$. Then there is an isotropic $10$-sequence $\{E_1,\ldots,E_{10}\}$ (depending on $H$) such that
\begin{equation} \label{eq:kl+}
 H  \sim  a_0E_{1,2}+a_1E_1 + \cdots + a_{10}E_{10}+\varepsilon K_S, 
\end{equation}
where $E_{1,2}  \sim  \frac{1}{3}\left(E_1+\cdots+E_{10}\right)-E_1-E_2$ (cf. \color{black} \eqref{eq:nuova}\color{black}) and $a_0,a_1, \ldots, a_{10}$
are nonnegative integers  with

\begin{equation}
\begin{cases}
 \mbox{either $a_0=0$ and $\#\{i \; | \; i \in \{1,\ldots,10\}, a_i>0\} \neq 9$,} \\
\label{eq:conda010} 
\mbox{or $a_{10}=0$,} 
\end{cases}
\end{equation}
 and

\begin{equation} \label{eq:eps}
\varepsilon= \begin{cases} 0, & \mbox{if some $a_i$ is odd} \\
0 \; \mbox{or} \; 1, & \mbox{if all $a_i$ are even.} \
\end{cases}
\end{equation} 
 In particular, \eqref{eq:kl+} is a simple isotropic decomposition.

More precisely,  given any simple isotropic decomposition 
$H \equiv b_1F_1+\cdots+b_nF_{n}$,
we may find an expression \eqref{eq:kl+} such that each $F_i$ occurs in it (up to numerical equivalence) with coefficient $b_i$ (and the remaining coefficients in \eqref{eq:kl+} are zero).  Moreover, if $F_i \cdot F_j=2$ for only one pair of indices $i,j$, then we may find isotropic $10$-sequences satisfying either of the conditions $F_i \equiv E_{1,2}$ and $F_j \equiv E_{1,2}$. 
\end{corollary}

\begin{proof}
  Let $H \equiv b_1F_1+\cdots+b_nF_{n}$ be a simple isotropic decomposition, $n \leq 10$.  By Proposition \ref{prop:kl+}  there is  a maximal
simple isotropic set  $\{E_1,\ldots,E_{10},E_{1,2}\}$ containing the set $\{F_1,\ldots,F_{n}\}$.  Moreover, if $F_i \cdot F_j=2$ for only one pair of indices $i,j$, then $n \leq 9$ by definition of a simple isotropic decomposition,
so that we can make sure, still by Proposition \ref{prop:kl+}, that either of $F_i$ or $F_j$ equals $E_{1,2}$. 
 Thus, we may write
 $H \equiv a_0E_{1,2}+a_1E_1 + \cdots + a_{10}E_{10}$, where each $F_i$ occurs  (up to numerical equivalence) with coefficient $b_i$, the other coefficients are zero and
where $E_{1,2} \equiv
\frac{1}{3}\left(E_1+\cdots+E_{10}\right)-E_1-E_2$.
\color{black}
This gives an expression of $H$ as in \eqref{eq:kl+} with $\varepsilon \in \{0,1\}$.
Note that
\color{black}
$a_i=0$ for at least one $i$.
We may furthermore by symmetry assume that
\[
a_1 \geq a_2 \; \; \mbox{and} \; \; a_3 \geq \cdots \geq a_{10}.
\]

We claim that either $a_0=0$ or $a_{10}=0$. Indeed, if
$a_0 >0$ and $a_{10}>0$, then we must have $a_2=0$. If $a_1=0$, then the length of the decomposition is $9$ and this contradicts the first line in \eqref{eq:int2-int}. If $a_1>0$, then the length of the decomposition is $10$ and this contradicts the second line in \eqref{eq:int2-int}. This proves our claim.  Moreover, if $a_0=0$,  the first line in \eqref{eq:int2-int} implies that $\#\{i \; | \; i \in \{1,\ldots,10\}, a_i>0\} \neq 9$. \color{black} Thus,  \eqref{eq:conda010} is satisfied. \color{black}

  If 
not all $a_i$ are even, we can, possibly after replacing one $E_i$ having odd coefficient $a_i$ by $E_i+K_S$, assume $\varepsilon=0$. \color{black} We can thus make sure that \eqref{eq:eps} is satisfied. \color{black} If $E_{1,2} \sim \frac{1}{3}\left(E_1+\cdots+E_{10}\right)-E_1-E_2$, we are done. If not,  we have
$3E_{1,2}+2E_1+2E_2 \sim E_3+\cdots+E_{10} +K_S$. If $a_0=0$ (respectively, $a_{10}=0$), we may replace $E_{1,2}$ by $E_{1,2}+K_S$ (resp., $E_{10}$ by $E_{10}+K_S$) without altering \eqref{eq:kl+}, and obtain the desired relation $3E_{1,2}+2E_1+2E_2 \sim E_3+\cdots+E_{10}$, that is, $E_{1,2} \sim \frac{1}{3}\left(E_1+\cdots+E_{10}\right)-E_1-E_2$.

One readily checks that \eqref{eq:kl+} under condition \eqref{eq:conda010} is a simple isotropic decomposition. 
\end{proof}

 The condition in \eqref{eq:eps} concerning the parity of the coefficients $a_i$ is related to divisibility properties of $H$, by the following:

\begin{lemma} \label{lemma:H2div}
  A line bundle $H$ on an Enriques surface  $S$  is numerically $2$-divisible  (that is, its class in $\Num(S)$ is $2$-divisible)  if and only if  all coefficients in any simple isotropic decomposition of $H$ in $\Num(S)$ are even. Furthermore, in this case, 
$(S,H)$ and $(S,H+K_S)$ belong to different irreducible components of the moduli space of polarized Enriques surfaces. 
\end{lemma}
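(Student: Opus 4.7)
For the ``if'' direction, I would simply observe that a decomposition $H\equiv a_1E_1+\cdots+a_nE_n$ with all $a_i$ even exhibits $H$ as twice a class in $\Num(S)$, so the conclusion is immediate.  For the ``only if'' direction, starting from any simple isotropic decomposition of $H$, I would apply Corollary~\ref{cor:kl+} to rewrite it in the canonical form $H\equiv a_0E_{1,2}+a_1E_1+\cdots+a_{10}E_{10}$ in $\Num(S)$, satisfying \eqref{eq:conda010} and arranged so that the original coefficients occur among the $a_j$; since $\{E_1,\ldots,E_{10},E_{1,2}\}$ generates the unimodular lattice $\Num(S)$, this reduces the problem to showing that if all of $E_i\cdot H$ ($i=1,\ldots,10$) and $E_{1,2}\cdot H$ are even, then all $a_j$ are even.

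Using the intersection numbers from Lemma~\ref{lemma:ceraprima} and setting $T:=a_1+\cdots+a_{10}$, these quantities reduce modulo~$2$ to $T+a_j$ for $j=1,2$, to $a_0+T+a_i$ for $i\geq 3$, and to $T+a_1+a_2$.  Requiring all of them to vanish first forces $a_1\equiv a_2\equiv T\pmod 2$; substituting into the last then yields $T\equiv 0\pmod 2$, so $a_1,a_2$ are even and $a_i\equiv a_0\pmod 2$ for $i\geq 3$.  The two subcases in \eqref{eq:conda010} each force $a_0$ to be even (trivially if $a_0=0$, and via the congruence at $i=10$ if $a_{10}=0$), which completes the argument: the coefficients of the original decomposition form a subset of the $a_j$.

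For the last assertion, I would write $H\equiv 2L$ with $L\in\Pic(S)$ and use that the kernel of $\Pic(S)\to\Num(S)$ is $\{0,K_S\}$, while $K_S$ is not itself $2$-divisible in $\Pic(S)$ (otherwise $\Pic(S)\cong\ZZ^{10}\+\ZZ/2$ would contain $4$-torsion).  This gives $H\sim 2L$ or $H\sim 2L+K_S$, and shows that exactly one of $H$ and $H+K_S$ is $2$-divisible in $\Pic(S)$.  The final step is to argue that this property is locally constant on $\E_{g,\phi}$: for a smooth family $\pi:\mathcal{S}\to B$ carrying a line bundle $\mathcal{H}$, the vanishing $h^1(\O_{S_b})=0$ makes the relative Picard scheme $\Pic(\mathcal{S}/B)$ étale over $B$, so any square root of $\mathcal{H}|_{S_0}$ extends uniquely to the family and, by uniqueness of extensions, its square must coincide with $\mathcal{H}$ globally.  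Hence $(S,H)$ and $(S,H+K_S)$ lie in distinct irreducible components of $\E_{g,\phi}$.  The main obstacle will be the mod-$2$ bookkeeping after the reduction to canonical form; the deformation-invariance in the final claim is routine once the étaleness of the relative Picard is invoked.
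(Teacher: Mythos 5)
Your proposal is correct and follows essentially the same route as the paper: both reduce to the canonical form of Corollary~\ref{cor:kl+}, exploit condition \eqref{eq:conda010} in a mod-$2$ bookkeeping argument (you intersect with $E_1,\dots,E_{10},E_{1,2}$ directly, the paper with differences such as $E_{i,j}-E_i$, but the content is the same), and settle the last assertion by noting that exactly one of $H$, $H+K_S$ is $2$-divisible in $\Pic(S)$ while $2$-divisibility is constant along an irreducible component. Your explicit justification of that last invariance via the \'etaleness of the relative Picard scheme fills in a step the paper leaves implicit.
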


\begin{proof}
  The if part  of the first assertion  is clear. To prove the converse,
assume that $H$ is numerically $2$-divisible and  let  
\[  H \equiv a_0E_{1,2}+a_1E_1+a_2E_2+a_3E_3+\cdots+a_{10}E_{10},\]
 be a simple isotropic decomposition in the form of  Corollary \ref{cor:kl+}  (modulo numerical equivalence); in particular, all $a_i \geq 0$  and $a_0=0$ or $a_{10}=0$. We consider these two cases separately and  let $E_{i,j}$ be defined as in 
\eqref{eq:10-3}.

Assume $a_0=0$. Since $(E_{i,j}-E_i) \cdot H=2a_i+a_j$, for $i \neq j$, and $H$ is numerically $2$-divisible, we must have all $a_j$ even, as desired. 

Assume $a_{10}=0$. For $i=1,2$ we have $(E_{i,10}-E_{10}) \cdot H=a_i$, hence $a_1$ and $a_2$ are even. For $i \geqslant 3$ we have $(E_i-E_{10}) \cdot H = -a_i$, hence also $a_i$ for $i \geqslant 3$ is even. Moreover $E_3\cdot H=a_0+a_1+a_2+a_4  +\cdots +a_{9}$,  and since $a_1,\ldots, a_{9}$  are all even, also $a_0$ is even.

 To prove the last assertion, assume, to get a contradiction, that
$(S,H)$ and $(S,H+K_S)$ belong to the same irreducible component of the moduli space of polarized Enriques surfaces. Then $H$ and $H+K_S$ are either both $2$-divisible in $\Pic(S)$ or not. However, we know that in the present case  only one of them is $2$-divisible, a contradiction.
\end{proof}

\begin{notation} \label{not:int}
  When writing a simple isotropic decomposition \eqref {eq:ssid} verifying \eqref {eq:int2-int} (up to  permutation of indices), we will usually adopt the convention that $E_i$, $E_j$, $E_{i,j}$ are primitive isotropic satisfying
$E_i \cdot E_j=1$ for $i \neq j$, $E_{i,j} \cdot E_i= E_{i,j} \cdot E_j=2$ and
$E_{i,j} \cdot E_k=1$ for $k \neq i,j$. This notation has already been used in Lemma \ref{lemma:ceraprima}  and Example \ref{ex:symm}.   (By Corollary \ref{cor:kl+}, there  is no ambiguity in this notation.) 
\end{notation}

\begin{remark} \label{ex:2divspec}
  The requirement that a simple isotropic decomposition satisfies 
$n \neq 9$ in case (i) and 
$n \neq 10$ in case (ii) of Lemma \ref{lemma:KL}, which is   equivalent to
 condition \eqref{eq:conda010},  is crucial in the last proof. Indeed, take 
$ H \equiv a_0E_{1,2}+a_1E_1+a_3E_3+\cdots+a_{10}E_{10}$, 
 where $a_1$ is an even nonnegative integer, and $a_0,a_3,\ldots,a_{10}$ are odd positive integers.  If $a_1=0$, then this decomposition is as in case (i) of Lemma \ref{lemma:KL} with $n=9$, and if $a_1>0$, then it is as in case (ii) with $n=10$. Hence, this is {\it not} a simple isotropic decoposition according to our definition. 
On the other hand $H$ is numerically $2$-divisible. Indeed, the claim is equivalent to $B:=E_{1,2}+E_3+\cdots+E_{10}$ being numerically $2$-divisible.
As
 \begin{eqnarray*}
   B & \equiv & 3(E_{1,2}+E_1+E_2) +(E_1+E_2+E_3+\cdots+E_{10})-2E_{1,2}-4E_1-4E_2 \\
     & \equiv &2 (E_1+\cdots+E_{10})-2E_{1,2}-4E_1-4E_2,
 \end{eqnarray*}
 using Lemma \ref{lemma:ceraprima}(a), the claim follows. 

 Note that by  Lemma \ref{lemma:H2div} we have that $(S,H)$ and $(S,H+K_S)$ belong to different irreducible components of the moduli space of polarized Enriques surfaces. 
\end{remark}

\begin{remark} \label{rem:eps}
  By Lemma \ref{lemma:H2div} we get that \eqref{eq:eps} is equivalent to
\begin{equation} \label{eq:eps2}
\varepsilon= \begin{cases} 0, & \mbox{if $H+K_S$ is not $2$-divisible in $\Pic(S)$,} \\
1, & \mbox{if $H+K_S$ is $2$-divisible in $\Pic(S)$.} 
\end{cases}
\end{equation} 
This means that the '$\varepsilon$' in expression \eqref{eq:eps} only depends on $H$ and not on the simple isotropic decomposition.
\end{remark}

\begin{remark} \label{rem:calcphi}
 Writing a simple isotropic decomposition of $H$ as in \eqref{eq:kl+} has the advantage that $\phi(H)$ is calculated by one among $E_{1,2},E_1,\ldots,E_{10}$. More precisely, setting $a:=\sum_{i=0}^{10}a_i$, one has
 \begin{equation}
   \label{eq:calcphi}
   \phi(H)= a - \max\{a_1-a_0,a_2-a_0,a_3,\ldots,a_{10},a_0-a_1-a_2\}.
 \end{equation}
Indeed, for any nontrivial isotropic effective $E \not \equiv
E_{1,2},E_1,\ldots,E_{10}$,
\color{black}
the divisor $E$ intersects  $E_{1,2},E_1,\ldots,E_{10}$ positively by \cite[Lemma~2.1]{klvan}, hence
\color{black}
$E \cdot H \geqslant a \geqslant a-a_i=E_i \cdot H$, for any $i \geqslant 3$. Then \eqref{eq:calcphi} follows since $E_i \cdot H=a+a_0-a_i$ for $i=1,2$
and $E_{1,2} \cdot H=a+a_1+a_2-a_0$. By symmetry, arguing as in the proof of Corollary \ref{cor:kl+},  one can furthermore make sure that 
\begin{equation}
   \label{eq:imp} a_1 >0, \; a_1 \geqslant a_2, \; a_3 \geqslant \cdots \geqslant a_{10} \; \; \mbox{and either} \; \; a_0 >0 \; \mbox{or} \; a_2 \geqslant a_3,
\end{equation}
in which case 
\begin{equation}
   \label{eq:calcphi2}
   \phi(H)= \min\{E_1 \cdot H, E_3 \cdot H, E_{1,2} \cdot H\}=a - \max\{a_1-a_0,a_
3,a_0-a_1-a_2\}.
 \end{equation}
\end{remark}

 We next recall the following from the introduction:

\begin{definition} \label{def:tutte2}
 Two polarized (respectively, numerically polarized) Enriques surfaces $(S,H)$ and $(S',H')$ in $\E_{g,\phi}$ (resp., $(S,[H])$ and $(S, [H'])$ in $\widehat{\E}_{g,\phi}$) {\em  admit the same  simple decomposition type} if there are simple isotropic decompositions
\[H \sim a_1 E_1+\cdots +a_nE_n+\varepsilon K_S
\; \; \mbox{and} \; \; H' \sim a_1 E'_1+\cdots +a_nE'_n+\varepsilon K_{S'}, \; \;  \mbox{with} \; \; \varepsilon  \in \{0,1\}\]
\[ \mbox {(resp.}\;\; H \equiv a_1 E_1+\cdots +a_nE_n
\; \; \mbox{and} \; \; H' \equiv a_1 E'_1+\cdots +a_nE'_n)\]
such that $E_i \cdot E_j=E'_i \cdot E'_j$ for all $i \neq j$.
\end{definition}

\begin{remark} \label{rem:notunique}
  A decomposition type is not necessarily unique within the same linear or numerical equivalence class,   even imposing the conditions \eqref{eq:imp}  on the coefficients. Moreover, also  properties  such as the length or being
$r$-symmetric may  vary with the different ways of writing the
decompositions.  Consider for instance the decomposition type $H
\equiv 2E_1+E_2+E_3+E_4+E_5+E_6+E_{1,7}$ (with $g=30$ and
$\phi(H)=7$),  written in the form of Corollary \ref{cor:kl+}, that is,  $\{E_1,\ldots,E_6\}$  may be extended  to an isotropic $10$-sequence   $\{E_1,\ldots,E_{10}\}$  so that $E_{1,7}$ is defined as in 
\eqref{eq:10-3}. This has length $7$ and is $5$-symmetric, but not
$6$-symmetric.  It therefore does not satisfy the conditions of Theorems \ref{thm:dom} and \ref{thm:dom2}. 
 Let also $E_{7,8}$ be as defined by
\eqref{eq:10-3}. It follows that $E_1+E_{1,7} \sim E_8+E_{7,8}$. Thus,
we may also write $H \equiv E_1+E_2+E_3+E_4+E_5+E_6+E_8+E_{7,8}$,
which has length $8$ and is $6$-symmetric.  This decomposition satisfies the conditions of Theorem \ref{thm:dom2}. 
\end{remark}

\begin{proposition} \label{prop:trans}
The property of admitting the same simple decomposition type defines equivalence relations on $\E_{g,\phi}$ and $\widehat{\E}_{g,\phi}$, respectively.
\end{proposition}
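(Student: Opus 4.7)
Reflexivity will follow from the existence of a simple isotropic decomposition (Corollary \ref{cor:kl++}), and symmetry is immediate from the definition. The content of the statement is transitivity, which I plan to address by a lattice-theoretic transfer argument.

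Suppose $(S,H) \sim (S',H')$ via a common decomposition type $T_1$, and $(S',H') \sim (S'',H'')$ via a type $T_2$. My plan is to produce a type-$T_2$ decomposition of $H$ on $S$; paired with the given type-$T_2$ decomposition of $H''$ on $S''$, it will witness $(S,H) \sim (S'',H'')$ directly. Concretely, given $H \equiv \sum a_i E_i$ and $H' \equiv \sum a_i E'_i$ (both of type $T_1$) together with $H' \equiv \sum b_j F'_j$ (type $T_2$), I will extend the sets $\{E_i\}$ and $\{E'_i\}$ to maximal simple isotropic sets $\mathcal{E}$ on $S$ and $\mathcal{E}'$ on $S'$ via Proposition \ref{prop:kl+}, choosing the remaining labels to match abstractly. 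The label-preserving bijection $\mathcal{E}' \to \mathcal{E}$ respects intersection numbers by construction, so by the remark after Definition \ref{def:simple} that maximal simple isotropic sets generate $\Num$, it extends uniquely to a lattice isometry $\iota: \Num(S') \to \Num(S)$. Applying $\iota$ to $H' \equiv \sum b_j F'_j$ yields $H \equiv \sum b_j F_j$ in $\Num(S)$, with $F_j := \iota(F'_j)$ primitive isotropic classes having exactly the intersection numbers prescribed by $T_2$.

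The main obstacle I anticipate is effectiveness of each $F_j$, which a priori is only a numerical class. To handle this I will use ampleness of $H$: one has $F_j \cdot H = \sum_{k \neq j} b_k (F_j \cdot F_k) > 0$, since the simple isotropic set has length $n \geq 2$ (length $n=1$ would force $H^2 = 0$) and all intersections $F_j \cdot F_k$ for $k \neq j$ are at least $1$. By Riemann--Roch, $F_j$ or $-F_j$ is effective, and the latter is ruled out by $-F_j \cdot H < 0$ combined with nefness of $H$. Hence each $F_j$ is effective and $\{F_j\}$ is a genuine type-$T_2$ simple isotropic decomposition of $H$.

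For the linear-polarization version of the relation, the same argument applies verbatim on the numerical side, and the torsion adjustment $\varepsilon K_S$ in \eqref{eq:sdt} is automatically consistent along the chain: by Remark \ref{rem:eps}, $\varepsilon$ depends only on whether $H + K_S$ is $2$-divisible in $\Pic(S)$, which is a property of $H$ alone and thus propagates intact. This completes the plan.
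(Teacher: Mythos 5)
Your proposal is correct and follows essentially the same route as the paper's proof: both transfer the second decomposition type through a lattice isometry $\Num(S')\to\Num(S)$ that matches the first type, verify effectiveness of the transported isotropic classes by intersecting with the ample $H$ and applying Riemann--Roch, and settle the $\varepsilon K_S$ ambiguity via Remark \ref{rem:eps}. The only (immaterial) difference is that the paper obtains the isometry from the transitivity of the orthogonal group of $U \+ E_8(-1)$ on isotropic $10$-sequences, whereas you build it directly from matched maximal simple isotropic sets --- which amounts to the same thing, since such a set generates $\Num(S)$ subject to the single relation $3E_{1,2}+2E_1+2E_2 \equiv E_3+\cdots+E_{10}$, which your label-matching preserves.
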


\begin{proof}
The property is clearly reflexive and symmetric, so we have left to prove transitivity. Assume therefore that $(S,H)$ and $(S',H')$ admit the same simple decomposition type and
$(S',H')$ and $(S'',H'')$ admit the same simple decomposition type. We will prove that so do
$(S,H)$ and $(S'',H'')$. The same proof will  work  for numerical polarizations.

 By assumption, using the notation of Corollary \ref{cor:kl+}, we have 
  \begin{eqnarray}
 \label{eq:pippo1}   H  & \sim &  a_0E_{1,2}+a_1E_1+\cdots+a_{10}E_{10} + \varepsilon K_S,  \\
 \label{eq:pippo2}   H'  & \sim &  a_0E'_{1,2}+a_1E'_1+\cdots+a_{10}E'_{10} + \varepsilon K_{S'},  \\
\label{eq:pippo3}  H' & \sim &  
           b_0F'_{1,2}+b_1F'_1+\cdots+b_{10}F'_{10} + \varepsilon' K_{S'}, \\
\label{eq:pippo4}      H''  & \sim &   b_0F''_{1,2}+b_1F''_1+\cdots+b_{10}F''_{10} + \varepsilon' K_{S''}.
  \end{eqnarray}
Here all $a_i$ and $b_i$ are nonnegative integers, $\{E_1,\ldots,E_{10}\}$, $\{E'_1,\ldots,E'_{10}\}$,
 $\{F'_1,\ldots,F'_{10}\}$ and $\{F''_1,\ldots,F''_{10}\}$  are isotropic $10$-sequences, $E_{1,2} \sim \frac{1}{3}(E_1+ \cdots +E_{10})-E_1-E_2$, 
and similarly for $E'_{1,2}$, $F'_{1,2}$ and $F''_{1,2}$. Moreover, by 
Remark \ref{rem:eps} and \eqref{eq:pippo2}-\eqref{eq:pippo3} we see that $\varepsilon=\varepsilon'$; more precisely, combining with \eqref{eq:pippo1} and \eqref{eq:pippo4} we have 
\begin{equation} \label{eq:eps22}
\varepsilon=\varepsilon'= \begin{cases} 0, & \hspace{-0.2cm} \mbox{\small{if $H+K_S$, $H'+K_{S'}$, $H''+K_{S''}$ are not $2$-divisible in the Picard group,}} \\
1, & \hspace{-0.2cm} \mbox{if \small{$H+K_S$, $H'+K_{S'}$, $H''+K_{S''}$ are  $2$-divisible in the Picard group.}} 
\end{cases}
\end{equation}

Now denote by '$[\;]$' the numerical equivalence classes of all divisors above.
Choose isomorphisms $\psi: \Num(S) \stackrel{\cong}{\to} U \+ E_8(-1)$
and $\varphi: \Num(S') \stackrel{\cong}{\to} U \+ E_8(-1)$ (cf. \cite[Lemma VIII.15.1]{BHPV}).
The orthogonal group on $U \+ E_8(-1)$ acts transitively on the set of isotropic $10$-sequences by \cite[Lemma 2.5.2]{cd}, whence we may find an element $\sigma$ of this group such that $\sigma\varphi([E'_i])=\psi([E_i])$ for $1 \leq i \leq 10$. As $[E_{1,2}] = \frac{1}{3}([E_1]+ \cdots +[E_{10}])-[E_1]-[E_2]$ and $[E'_{1,2}] = \frac{1}{3}([E'_1]+ \cdots +[E'_{10}])-[E'_1]-[E'_2]$, we also have $\sigma\varphi([E'_{1,2}])=\psi([E_{1,2}])$. It follows from \eqref{eq:pippo1}-\eqref{eq:pippo2} that $\psi^{-1}\sigma\varphi([H'])=[H]$. By \eqref{eq:pippo3} we also have
\[ \psi^{-1}\sigma\varphi([H'])= 
           b_0\psi^{-1}\sigma\varphi([F'_{1,2}])+b_1\psi^{-1}\sigma\varphi([F'_1])+\cdots+b_{10}\psi^{-1}\sigma\varphi([F'_{10}]).\]
Setting $[E''_{1,2}]:=\psi^{-1}\sigma\varphi([F'_{1,2}])$ and $[E''_i]:=\psi^{-1}\sigma\varphi([F'_i])$, we thus have
\begin{equation} \label{eq:SID}
 [H] = b_0[E''_{1,2}]+b_1[E''_1]+\cdots+b_{10}[E''_{10}],
\end{equation}
where $[E''_{1,2}]  =  \frac{1}{3}\left([E''_1]+ \cdots +[E''_{10}]\right)-\color{black} [E''_1]-[E''_2]\color{black}$ and $\{[E''_1],\ldots,[E''_{10}]\}$ is an isotropic $10$-sequence in $\Num(S)$. We have $[E''_{1,2}] \cdot [H]=2b_1+2b_2+b_3+\cdots+b_{10}=[F'_{1,2}] \cdot [H'] >0$ (as $F'_{1,2}$ is effective) and likewise $[E''_i] \cdot [H] >0$ for $1 \leq i \leq 10$. Hence, by Riemann-Roch and Serre duality, $[E''_{1,2}]$ and $[E''_i]$, $1 \leq i \leq 10$, can be represented by effective classes in $\Pic(S)$.  It follows that \eqref{eq:SID} is a simple isotropic decomposition in $\Num(S)$. By Corollary \ref{cor:kl+} 
and Remark \ref{rem:eps} 
we may find representatives $E''_{1,2}$ and $E''_i$, respectively, so that $E''_{1,2} \sim  \frac{1}{3}(E''_1+ \cdots +E''_{10})-\color{black} E''_1-E''_2 \color{black}$, and
\[
 H \sim b_0E''_{1,2}+b_1E''_1+\cdots+b_{10}E''_{10}+ \varepsilon K_S.
\]
Thus, comparing with \eqref{eq:pippo4}, recalling \eqref{eq:eps22}, we see 
that  $(S,H)$ and $(S'',H'')$ admit the same simple decomposition type. 
\end{proof}

\begin{proposition} \label{prop:desccomp}
  Two numerically polarized Enriques surfaces $(S,[H])$ and  $(S',[H'])$  lie in the same  irreducible  component of $\widehat{\E}_{g,\phi}$ if and only if  they admit the same  simple decomposition type.
\end{proposition}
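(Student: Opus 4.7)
The plan is to exploit the construction of $\widehat{\E}_{g,\phi}$ recalled in \S\ref{sec:gen00}: it is the union of the irreducible orthogonal modular varieties $\M^a_{\tiny{\mbox{En}},\mathfrak{h}}$, one per $O(U\oplus E_8(-1))$-orbit $\mathfrak{h}$ of classes with square $2g-2$ and $\phi$-invariant equal to $\phi$. Since these $\M^a_{\tiny{\mbox{En}},\mathfrak{h}}$ are irreducible and indexed by distinct orbits, they are precisely the irreducible components of $\widehat{\E}_{g,\phi}$. The statement therefore reduces to the purely lattice-theoretic assertion that, after fixing isometries $\Num(S),\Num(S')\simeq U\oplus E_8(-1)$, the classes $[H]$ and $[H']$ lie in the same $O(U\oplus E_8(-1))$-orbit if and only if $(S,[H])$ and $(S',[H'])$ admit the same simple decomposition type.

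The ``if'' direction is essentially already carried out inside the proof of Proposition~\ref{prop:trans}: writing both decompositions in the canonical form of Corollary~\ref{cor:kl+} and invoking the transitivity of $O(U\oplus E_8(-1))$ on isotropic $10$-sequences via \cite[Lemma~2.5.2]{cd}, one produces a lattice isometry sending the $10$-sequence associated with $[H]$ to that associated with $[H']$; this isometry necessarily sends $[H]$ to $[H']$ and places both classes in the same orbit.

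For the converse, I would fix a lattice isometry $\tau:\Num(S)\to\Num(S')$ with $\tau([H])=[H']$ and start from any simple isotropic decomposition $H\equiv a_0 E_{1,2}+a_1E_1+\cdots+a_{10}E_{10}$ in the canonical form of Corollary~\ref{cor:kl+}. Push it through $\tau$ by setting $[\widetilde E_i]:=\tau([E_i])$ and $[\widetilde E_{1,2}]:=\tau([E_{1,2}])$ in $\Num(S')$. Since $\tau$ preserves the intersection form and carries primitive classes to primitive classes, the transported classes are primitive isotropic with exactly the same mutual intersection numbers as the $E_i$, so the combinatorial conditions \eqref{eq:int2-int}--\eqref{eq:conda010} defining a simple isotropic decomposition pass automatically. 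The only substantive point is effectivity: since $[H']$ is ample and $[\widetilde E_i]\cdot[H']=[E_i]\cdot[H]>0$, Riemann--Roch together with Serre duality forces each $[\widetilde E_i]$ (and $[\widetilde E_{1,2}]$) to contain an effective representative, the opposite class being excluded by negative pairing with the ample $[H']$. This exhibits a simple isotropic decomposition of $H'$ of exactly the same type as the chosen decomposition of $H$. The main, but mild, obstacle is this effectivity verification; once it is in place, the coefficients and intersection pattern are transported along $\tau$ for free, and the proposition follows.
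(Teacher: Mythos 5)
Your proposal is correct and follows essentially the same route as the paper: both directions rest on the identification (from \cite{GrHu}) of the irreducible components of $\widehat{\E}_{g,\phi}$ with orbits of the orthogonal group, the canonical form of Corollary~\ref{cor:kl+}, and the transitivity of the orthogonal group on isotropic $10$-sequences from \cite[Lemma~2.5.2]{cd}. The only difference is that the paper dismisses the ``only if'' direction as immediate, whereas you spell out the transport of a decomposition along an isometry together with the effectivity check via Riemann--Roch and ampleness of $[H']$ --- which is exactly the argument already carried out inside the proof of Proposition~\ref{prop:trans}.
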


\begin{proof}
   Since $\Num(S) \cong U \+ E_8(-1)$  is constant among all $S \in \E$, the only if part is immediate.

Conversely, it is proved in \cite{GrHu} that the  irreducible  components of $\widehat{\E}_{g,\phi}$ correspond precisely to the different orbits of the action of the orthogonal group on $U \+ E_8(-1)$. Since this group acts transitively on the set of isotropic $10$-sequences by \cite[Lemma 2.5.2]{cd}, and  $E_{1,2} \equiv \frac{1}{3}(E_1+\cdots+E_{10})-E_1-E_2$,  we see that any two numerical polarizations  admitting the same  simple decomposition type lie in the same  irreducible  component of $\widehat{\E}_{g,\phi}$, as claimed.
\end{proof}

\begin{question}\footnote{This question has subsequently been answered positively in  \cite[Thm.~1.1]{kn-JMPA}.}
 \label{q:desccomp}
  Does Proposition~\ref{prop:desccomp} also hold for polarized
Enriques surfaces? In other words, is it true that $(S,H)$ and $(S,H')$ lie
in the same irreducible component of $\E_{g,\phi}$ if and only if $H$
and $H'$ admit the same simple decomposition type?  (The ``only if''
part follows as in the first lines of the proof of
\ref{prop:desccomp}, as $\Pic(S) \cong U \+ E_8(-1)\+ \ZZ/2\ZZ$ is
also constant among all $S \in \E$.)

Theorems \ref{thm:dom} and \ref{thm:dom2} give a positive answer in the case of simple decomposition types \color{black} that are \color{black} of length $\leqslant 5$ or  $6$--symmetric.  
\end{question}

The following lemma classifies all possible   equivalence classes of  simple  decomposition types with $\phi \leqslant 5$. Note that all decomposition types do exist on any Enriques surface, by Lemma \ref{lemma:ceraprima}(a) and the existence of isotropic $10$-sequences.

\begin{lemma} \label{lemma:decomp}
  Assume $H$ is an effective line bundle on an Enriques surface $S$ such that $H^2=2(g-1) >0$. If $1 \leqslant \phi(H) \leqslant 5$, the line bundle $H$ has one and only one of the following simple isotropic decompositions:

(i) If $\phi(H)=1$, then $H \sim (g-1)E_1+E_2$. 

(ii) If $\phi(H)=2$, then 
\begin{itemize}
\item $H \sim \frac{g-2}{2}E_1 +E_2 +E_3$ if $g$ is even,
\item $H \sim \frac{g-1}{2}E_1 +E_{1,2}$ or $H \equiv \frac{g-1}{2}E_1 +2E_2$ (with $g \geqslant 5$), if $g$ is odd. 
\end{itemize}

(iii) If $\phi(H)=3$, then 
\begin{itemize}
 \item $H \sim \frac{g-3}{3}E_1+E_2+E_{1,2}$ or $H \sim \frac{g-3}{3}E_1+2E_2+E_3$
 (with $g \geqslant 9$) if $g \equiv 0 \; \mbox{mod} \; 3$,
\item $H \sim \frac{g-4}{3}E_1+E_2+E_3+E_4$ or $H \sim \frac{g-1}{3}E_1+3E_2$
 (with $g \geqslant 10$) if $g \equiv 1 \; \mbox{mod} \; 3$,
\item $H \sim \frac{g-2}{3}E_1+E_2+E_{1,3}$ if $g \equiv 2 \; \mbox{mod} \; 3$.
\end{itemize}

(iv) If $\phi(H)=4$, then 
\begin{itemize}
\item 
  \begin{itemize}
  \item [] $H \sim \frac{g-4}{4}E_1+3E_2+E_3$, $ g \geqslant 16$, or 
  \item [] $H \sim \frac{g-4}{4}E_1+E_2+E_3+E_{1,4}$, $ g \geqslant 12$,
  \end{itemize}
if $g \equiv 0 \; \mbox{mod} \; 4$,
\item  \begin{itemize}
\item [] $H \equiv \frac{g-1}{4}E_1+4E_2$, $g \geqslant 17$, or
\item [] $H \equiv \frac{g-1}{4}E_1+2E_{1,2}$, $g \geqslant 9$, or
\item []  $H \equiv \frac{g-5}{4}E_1+2E_{2}+2E_3$, $g \geqslant 13$, or
\item []  $H \sim \frac{g-5}{4}E_1+2E_{2}+E_{1,2}$, $g \geqslant 13$, 
\end{itemize}
if $g \equiv 1 \; \mbox{mod} \; 4$,
\item\begin{itemize}
\item []$H \sim \frac{g-6}{4}E_1+2E_{2}+E_3+E_4$, $g \geqslant 14$, or
\item []$H \sim  \frac{g-2}{4}  E_{1,2}+E_{1}+E_2$, $g \geqslant  10$,
\end{itemize}
if $g \equiv 2 \; \mbox{mod} \; 4$,
\item \begin{itemize}
\item  [] $H \sim \frac{g-3}{4}E_1+2E_2+E_{1,3}$,  $g \geqslant 15$   or
\item [] $H \sim \frac{g-7}{4}E_1+E_{2}+E_{3}+E_4+E_5$, $g \geqslant  11$, 
\end{itemize}
if $g \equiv 3 \; \mbox{mod} \; 4$.
\end{itemize}

(v) If $\phi(H)=5$, then
\begin{itemize}

\item
  \begin{itemize}
  \item []$H \sim \frac{g-5}{5}E_1+E_2+2E_{1,2}$, $ g \geqslant 15$, or 
\item []$H \sim \frac{g-10}{5}E_1+2E_2+E_3+E_4+E_5$, $ g \geqslant 20$, or 
\item []$H \sim \frac{g-5}{5}E_1+4E_2+E_3$, $ g \geqslant 25$
  \end{itemize}
if $g \equiv 0 \; \mbox{mod} \; 5$,

\item
  \begin{itemize}
  \item []$H \sim \frac{g-11}{5}E_1+E_2+E_3+E_4+E_5+E_6$, $ g \geqslant 16$, or 
  \item []$H \sim \frac{g-6}{5}E_1+2E_2+E_3+E_{1,4}$, $ g \geqslant 21$,
 or
\item []$H \sim \frac{g-1}{5}E_1+5E_2$, $ g \geqslant 26$
  \end{itemize}
if $g \equiv 1 \; \mbox{mod} \; 5$,

\item
  \begin{itemize}
  \item []$H \sim \frac{g-7}{5}E_1+E_2+E_3+E_4+E_{1,5}$, $ g \geqslant 17$, or 
  \item []$H \sim \frac{g-7}{5}E_1+3E_2+E_{1,2}$, $ g \geqslant 22$, or
\item []$H \sim \frac{g-7}{5}E_1+3E_2+2E_3$, $ g \geqslant 22$
  \end{itemize}
if $g \equiv 2 \; \mbox{mod} \; 5$,

\item
  \begin{itemize}
  \item []$H \sim \frac{g-3}{5}E_{1}+2E_{1,3}+E_2$, $ g \geqslant 18$, or 
  \item []$H \sim \frac{g-8}{5}E_1+2E_2+E_3+E_{1,2}$, $ g \geqslant 18$, or
\item []$H \sim \frac{g-8}{5}E_1+3E_2+E_3+E_4$, $ g \geqslant 23$
  \end{itemize}
if $g \equiv 3 \; \mbox{mod} \; 5$,

\item
  \begin{itemize}
  \item []$H \sim \frac{g-9}{5}E_{1}+2E_2+2E_3+E_4$, $ g \geqslant 19$, or 
  \item []$H \sim \frac{g-4}{5}E_{1,2}+E_1+E_2+E_3$, $ g \geqslant 19$,  or
\item []$H \sim \frac{g-4}{5}E_1+3E_2+E_{1,3}$, $ g \geqslant 24$
  \end{itemize}
if $g \equiv 4 \; \mbox{mod} \; 5$,
\end{itemize}
\end{lemma}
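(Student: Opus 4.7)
The plan is to invoke Corollary \ref{cor:kl+} and then carry out a finite, purely combinatorial enumeration guided by the formula \eqref{eq:calcphi2} for $\phi(H)$. After writing $H$ in the canonical form
\[
H \sim a_0 E_{1,2} + a_1 E_1 + \cdots + a_{10} E_{10} + \varepsilon K_S
\]
under the normalization \eqref{eq:imp}, and setting $a := \sum_{i=0}^{10} a_i$, one has
\[
\phi(H) = \min\{E_1 \cdot H,\; E_3 \cdot H,\; E_{1,2} \cdot H\} = \min\{a - a_1 + a_0,\; a - a_3,\; a + a_1 + a_2 - a_0\},
\]
and each of the three quantities is simultaneously $\geq \phi(H)$.

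For each $\phi \in \{1,\ldots,5\}$, the enumeration will split into three cases according to which of the three quantities above realizes the minimum $\phi$. The constraints together with \eqref{eq:imp} bound $a$ and the number of positive coefficients among $a_3,\ldots,a_{10}$ (by $\phi$), so only finitely many tuples are admissible. Decompositions in the lemma's statement involving $E_{1,j}$ for $j \neq 2$ correspond to relabeling the isotropic $10$-sequence before applying Corollary \ref{cor:kl+}, and hence do not produce essentially new cases.

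For each admissible tuple, the genus is read off from
\[
\tfrac{1}{2} H^2 = a_0\,(2a_1 + 2a_2 + a_3 + \cdots + a_{10}) + \sum_{1 \leq i < j \leq 10} a_i a_j = g - 1,
\]
which comes from the intersection form on a maximal simple isotropic set, and the results are sorted by the residue of $g$ modulo $\phi$. By Remark \ref{rem:eps}, the decomposition determines $H$ up to linear equivalence (with $\varepsilon = 0$) when some $a_i$ is odd, and only up to numerical equivalence otherwise; this dictates the use of $\sim$ or $\equiv$ in each entry of the statement. Existence of every listed decomposition on an arbitrary Enriques surface follows from Lemma \ref{lemma:ceraprima}(a) together with the existence of isotropic $10$-sequences on every Enriques surface.

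The main obstacle will be the sheer volume of the case analysis for $\phi = 4$ and $\phi = 5$, where up to four distinct decomposition types coexist within a single residue class of $g \bmod \phi$. The challenge will be to perform the enumeration without omission and to verify that the listed tuples are pairwise inequivalent as decomposition types; here the rigidity built into Corollary \ref{cor:kl+} (any alternative simple isotropic decomposition's constituents must reappear among the $E_i$ or as $E_{1,2}$ in the canonical form) will separate the cases.
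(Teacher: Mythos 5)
Your strategy for \emph{producing} the list is a genuinely different route from the paper's: the paper argues by induction on $\phi$, choosing an isotropic $E$ with $E\cdot H=\phi(H)$ and a suitable $k$ so that $H-kE$ either has smaller $\phi$-invariant (reducing to the previous cases) or falls under Proposition \ref{prop:bound}, whereas you propose to read everything off the canonical form of Corollary \ref{cor:kl+} together with \eqref{eq:calcphi2}. That half of the plan is workable in principle, modulo one slip: the constraints do \emph{not} bound $a$, and the admissible tuples are not finite in number (otherwise only finitely many $g$ would occur). What is true is that, in each of the three cases according to which of $E_1\cdot H=2a_0+a_2+\cdots+a_{10}$, $E_3\cdot H=a-a_3$, $E_{1,2}\cdot H=2a_1+2a_2+a_3+\cdots+a_{10}$ attains the minimum $\phi$, all but one coefficient is bounded by $\phi$, leaving finitely many one-parameter families; you would need to say this correctly for the enumeration to get off the ground.

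The genuine gap is in the ``only one'' half of the statement. You propose to deduce it from ``the rigidity built into Corollary \ref{cor:kl+}'', asserting that the constituents of any alternative simple isotropic decomposition of $H$ must reappear in \emph{the} canonical form. The corollary says no such thing: it says that each given decomposition can be completed to \emph{some} expression \eqref{eq:kl+} with respect to \emph{some} isotropic $10$-sequence, not that all decompositions of a fixed $H$ sit inside one and the same canonical form. Remark \ref{rem:notunique} exhibits a single class admitting two simple isotropic decompositions of different lengths and different coefficient multisets (even under \eqref{eq:imp}), so the canonical coefficient tuple is genuinely not an invariant of $H$, and your enumeration may a priori list the same $H$ under two distinct headings. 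Ruling this out for $\phi\leq 5$ requires an intrinsic argument; the paper does it by counting, for each pair of competing decompositions with the same $g$ and $\phi$, the number of effective isotropic numerical classes $F$ with $F\cdot H$ equal to a suitable value (using \cite[Lemma 2.1]{klvan}) and checking that these counts differ. Note also that verifying the listed tuples are ``pairwise inequivalent as decomposition types'' is not by itself enough: you must exclude that one and the same $H$ admits two inequivalent types, which is exactly the phenomenon of Remark \ref{rem:notunique}. Some substitute for the paper's counting argument is needed before the ``one and only one'' claim is established.
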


\begin{proof}
  The proof is tedious but straightforward and similar to  \cite[pf. of Prop. 1.4 in \S 2.2]{KL1},  and we therefore will leave most of it to the reader.  The idea is to pick an effective, isotropic $E$ such that $E \cdot H=\phi(H)$, find a suitable integer $k$ so that $\phi(H-kE) < \phi(H)$ (in which case we use the classification for lower $\phi$), or so that $\phi(H-kE)=\phi(H)$ and $H-kE$ is as in Proposition \ref{prop:bound}(i) or (ii). As a sample, we show how this works in the case $\phi(H)=5$ and $g \equiv 3 \; \mbox{mod} \; 5$. 

We pick an effective, isotropic $E$ such that $E \cdot H=\phi(H)=5$ and set $k:=\frac{g-13}{5}$. Then $(H-kE)^2=24$, so that $\phi(H-kE) \leqslant 4$ by \eqref{eq:bound}.   \medskip

Assume $\phi(H-kE) = 4$ and note that $E\cdot (H-kE)=E\cdot H=5$. By the classification in the case $\phi=4$, we have the three possibilities, where we use Notation \ref{not:int}:
\begin{itemize}
\item[(a)] $H-kE \sim 3F_1+2F_{1,2}$,
 \item[(b)] $H-kE  \equiv   2(F_1+F_2+F_3)$, 
\item[(c)] $H-kE \sim 2F_1+2F_2+F_{1,2}$.
\end{itemize}

Case (b) is impossible, as $5=E \cdot (H-kE)$.  

In case (a) we have $F_1 \cdot (H-kE)=4$ and $F_{1,2} \cdot H=6$, hence $E \not \equiv F_1,F_{1,2}$. Thus, $E \cdot F_1=E \cdot F_{1,2}=1$. Let $F:=F_1+F_{1,2}-E$. Then $F^2=0$, $E \cdot F=2$ and $F_1 \cdot F=1$, so that $F$ is effective, non--zero and we have
\[H \sim kE+3F_1+2F_{1,2} \sim (k+2)E+F_1+2F.\]
Using Notation \ref{not:int}, we set $E_1:=E$, $E_2:=F_1$ and $E_{1,3}:=F$ and, recalling that $k+2=\frac{g-3}{5}$, we obtain the desired form
\begin{equation} \label{eq:form1} 
 H \sim \frac{g-3}{5}E_1+E_2+2E_{1,3}.
\end{equation}
As $5 =\phi(H) \leqslant E_2 \cdot H=\frac{g-3}{5}+2$, we have $g \geqslant 18$. 

In case (c) we have $F_1 \cdot (H-kE)=F_2 \cdot (H-kE)=4$ and $F_{1,2} \cdot (H-kE)=8$, hence $E \not \equiv F_1,F_2,F_{1,2}$. Thus, $E \cdot F_1=E \cdot F_2 = E \cdot F_{1,2}=1$. Let $F:=F_2+F_{1,2}-E$. Then $F^2=0$, $E \cdot F=F_1 \cdot F=2$
and $F_2 \cdot F=1$ and we have
\[ H \sim kE+2F_1+2F_2+F_{1,2} \sim (k+1)E+2F_1+F_2+F.\]
Using Notation \ref{not:int}, we set $E_1:=E$, $E_2:=F_1$, $E_3:=F_2$ and $E_{1,2}:=F$ and, recalling that $k+1=\frac{g-8}{5}$, we obtain the desired form
\begin{equation} \label{eq:form2} 
H \sim \frac{g-8}{5}E_1+2E_2+E_3+E_{1,2}.
\end{equation}
As $5 =\phi(H) \leqslant E_2 \cdot H=\frac{g-8}{5}+3$, we have $g \geqslant 18$.

We claim that $H$ cannot simultaneously have a simple isotropic decomposition as in \eqref{eq:form1} and \eqref{eq:form2}. Indeed,  there are two (respectively, three) isotropic, effective classes $F \in \Num(S)$ such that $F \cdot H=\frac{g+7}{5}$ in case \eqref{eq:form2} if $g >18$ (resp., $g=18$), namely $F \equiv E_2,E_3$ (resp., $F \equiv E_1,E_2,E_3$), whereas there is only one (resp., two) such classes in case \eqref{eq:form1}, namely $F \equiv E_2$ (resp., $F \equiv E_1,E_2$), as  $E_{1,3} \cdot H= \frac{2g-1}{5}>\frac{g+7}{5}$  and $F \cdot H \geqslant \frac{g-3}{5}+1+2=\frac{g+12}{5}$ for $F \not \equiv E_1,E_2,E_{1,3}$ by \cite[Lemma 2.1]{klvan}. \medskip

Assume  $\phi(H-kE)  = 3$.  By the classification in the case $\phi=3$, we have the two possibilities:
\begin{itemize}
\item[(d)] $H-kE \sim 3F_1+F_{2}+F_3+F_4$,
 \item[(e)] $H-kE \sim 4F_1+3F_2$
\end{itemize}
 
In case (d) we have $F_1 \cdot (H-kE)=3$, hence $E \not \equiv F_1$. Thus, we must have $E \cdot F_1=1$ and, possily after rearranging indices, $E \cdot F_2 =E \cdot F_3=1$ and $E \equiv F_4$. Thus, using again Notation \ref{not:int}, we set $E_1:=E$, $E_2:=F_1$, $E_3:=F_2$ and $E_{4}:=F_3$ and, recalling that $k+1=\frac{g-8}{5}$, we obtain the desired form
\begin{equation} \label{eq:form3} 
H \sim \frac{g-8}{5}E_1+3E_2+E_3+E_{4},
\end{equation}
possibly after substituting $E_4$ with $E_4+K_S$. Since  $5 =\phi(H) \leqslant E_2 \cdot H=\frac{g-8}{5}+2$, we obtain $g \geqslant 23$. 
 
 Because of the different values of $\phi(H-kE)$, it is again not possible that $H$ can be written both as in \eqref{eq:form3} and as in \eqref{eq:form1} or \eqref{eq:form2}.  

In case (e) we have $F_1 \cdot (H-kE)=3$ and $F_2 \cdot (H-kE)=4$, whence $E \not \equiv F_1, F_2$. It follows that $E \cdot F_1>0$ and $E \cdot F_2 >0$, so that $5=E \cdot (H-kE) \geqslant 7$, a contradiction.\medskip

Assume  $\phi(H-kE)  = 2$.  By the classification in the case $\phi=2$, we have 
the two possibilities:
\begin{itemize}
\item[(f)] $H-kE \sim 6F_1+F_{1,2}$,
 \item[(g)] $H-kE \equiv 6F_1+2F_2$.
\end{itemize}
In both cases,  since $F_1 \cdot (H-kE)=2$, we have $E \not \equiv F_1$, whence the contradiction $5=E \cdot(H-kE) \geqslant 6 E \cdot F_1 \geqslant 6$.\medskip

Assume finally $\phi(H-kE) = 1$. By the classification in the case $\phi=1$, we have 
$H-kE \sim 12F_1+F_2$. As $F_1 \cdot (H-kE)=1$, we have $E \not \equiv F_1$, whence the contradiction $5=E \cdot(H-kE) \geqslant 12 E \cdot F_1 \geqslant 12$.
\end{proof}

\begin{remark} \label{rem:decomp} We will later use the observation immediately deduced from parts (i)-(ii) of Lemma \ref {lemma:decomp} that for $\phi(H)\leqslant 2$  there are at most three \color{black} distinct \color{black} numerical, effective, isotropic classes $E$ such that $E \cdot H \leqslant 2$.
\end{remark}

 We will now prove 
Proposition \ref{prop:kl+}. First we need three auxiliary results.

\begin{lemma} \label{lemma:ext1}
Let $\{E_1,\ldots,E_r\}$ be an isotropic $r$-sequence with $2
\leqslant r \leqslant 9$, 
and $F$ an isotropic divisor such that $F \cdot E_1=F \cdot E_2=2$ and
$F \cdot E_i=1$ for all $i \in\{3,\ldots, r\}$.
Then there is an isotropic $10$-sequence $\{E_1,\ldots,
E_r,E_{r+1},\ldots, E_{10}\}$ such that $F \cdot E_i=1$ for all $i
\in\{r+1,\ldots,10\}$.
\end{lemma}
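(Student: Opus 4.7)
The plan is to apply Lemma \ref{lemma:ceraprima}(b) to the auxiliary effective divisor $D := E_1 + E_2 + F$.

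First I would check the numerical preliminaries. A direct computation using the prescribed intersection numbers yields $D^2 = 2(E_1 \cdot E_2) + 2(E_1 \cdot F) + 2(E_2 \cdot F) = 2 + 4 + 4 = 10$, together with $E_i \cdot D = 3$ for every $1 \leqslant i \leqslant r$ and $F \cdot D = 4$. In particular $\phi(D) \leqslant 3$.

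The main technical step is showing $\phi(D) \geqslant 3$. I would proceed by contradiction, assuming that some primitive effective isotropic divisor $E$ satisfies $E \cdot D \leqslant 2$. Here I will invoke \cite[Lemma 2.1]{klvan}, according to which any two primitive effective isotropic divisors $A, B$ on an Enriques surface satisfy $A \cdot B \geqslant 0$, with equality if and only if $A \equiv B$. Writing $E \cdot D = E \cdot E_1 + E \cdot E_2 + E \cdot F$ as a sum of three nonnegative integers of total size $\leqslant 2$, a brief enumeration of the possible distributions of the values $0, 1, 2$ among the three summands shows that each case forces either two of $E_1, E_2, F$ to coincide numerically (contradicting $E_1 \cdot E_2 = 1$, $E_1 \cdot F = E_2 \cdot F = 2$), or $E$ to coincide numerically with some $E_i$ in such a way that the remaining intersection $E \cdot F$ becomes $1$ instead of the prescribed value $2$. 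This rules out every case, giving $\phi(D) = 3$.

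Once this is established, Lemma \ref{lemma:ceraprima}(b) produces an isotropic $10$-sequence $\{E'_1, \ldots, E'_{10}\}$ whose members are, up to numerical equivalence, exactly the primitive effective isotropic classes computing $\phi(D) = 3$. Since $E_i \cdot D = 3$ for $i = 1, \ldots, r$, after relabeling I may assume $E_i \equiv E'_i$; replacing each such $E'_i$ by the numerically equivalent effective representative $E_i$ preserves all intersection numbers of the $10$-sequence, and then renaming $E'_j$ as $E_j$ for $j > r$ yields an isotropic $10$-sequence $\{E_1, \ldots, E_{10}\}$ extending the given one. To conclude, I would use $F^2 = 0$ and $F \cdot D = 4$ together with the last statement of Lemma \ref{lemma:ceraprima}(b) to deduce that $F \equiv E_{i,j}$ for some $i \neq j$, where $E_{i,j}$ is the class defined in \eqref{eq:10-3}. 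The identities $F \cdot E_1 = F \cdot E_2 = 2$ combined with the intersection behaviour of the $E_{i,j}$'s then force $\{i,j\} = \{1,2\}$, so $F \equiv E_{1,2}$; consequently $F \cdot E_k = 1$ for all $k \geqslant 3$, and in particular for $k = r+1, \ldots, 10$, as required.
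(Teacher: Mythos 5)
Your proposal is correct and follows essentially the same route as the paper: both introduce the auxiliary divisor $D=E_1+E_2+F$ with $D^2=10$ and $\phi(D)=3$ and then invoke Lemma \ref{lemma:ceraprima}(b) to produce the extending $10$-sequence. The only (immaterial) difference is at the end, where the paper deduces $F\cdot E_i=1$ for $i\geqslant 3$ from $12=F\cdot(3D)=4+F\cdot(E_3+\cdots+E_{10})$ together with the positivity of each $F\cdot E_i$, while you instead identify $F\equiv E_{1,2}$ via the last statement of Lemma \ref{lemma:ceraprima}(b); you also spell out the verification of $\phi(D)=3$, which the paper leaves implicit.
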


\begin{proof}
  The divisor $D:=E_1+E_2+F$ satisfies $D^2=10$ and $\phi(D)=3=E_i \cdot D$ for all $i \in \{1,\ldots,r\}$. Thus, $3D \sim E_1+\cdots+E_{10}$ for an isotropic $10$-sequence $\{E_1,\ldots,  E_{10}\}$ by Lemma \ref{lemma:ceraprima}(b). Since $F \cdot D=4$, we have $F \not \equiv E_i$ for any $i$, hence $F \cdot E_i >0$ for all $i$ by \cite[Lemma 2.1]{klvan}. As 
$12= 3F \cdot D= F \cdot (3D)= 4+F \cdot (E_3+\cdots+E_{10})$, we must have $F \cdot E_i=1$ for all $i$.
\end{proof}

\begin{lemma} \label{lemma:ext0}
  Let $\{E_1,\ldots,E_8,F\}$ be an isotropic $9$-sequence. Then, for any extensison of $\{E_1,\ldots,E_8\}$ to an isotropic $10$-sequence $\{E_1,\ldots,E_{10}\}$, we have either
  \begin{itemize}
  \item[(i)] $F \equiv E_i$, for $i=9$ or $10$, or
  \item[(ii)] $F \cdot E_9=F \cdot E_{10}=2$.
  \end{itemize}
\end{lemma}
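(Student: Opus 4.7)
The plan is to use the distinguished divisor $D'$ of Lemma~\ref{lemma:ceraprima}(a) associated to the $10$-sequence $\{E_1,\ldots,E_{10}\}$ as the pivot of the argument. Recall that this gives a primitive effective class $D'$ with $3D' \sim E_1 + \cdots + E_{10}$ and a primitive effective isotropic divisor $E_{9,10}$ satisfying $D' \sim E_9 + E_{10} + E_{9,10}$, with $E_{9,10}\cdot E_i = 1$ for $i\le 8$ and $E_{9,10}\cdot E_9 = E_{9,10}\cdot E_{10} = 2$. Note also that $F$ is primitive (any common factor of $F$ would have to divide $F\cdot E_1 = 1$), so \cite[Lemma~2.1]{klvan} applies to $F$ against any other primitive effective isotropic class.

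The key computation is then to express $F \cdot D'$ in two ways. On the one hand, using $3D' \sim E_1 + \cdots + E_{10}$ and the fact that $F\cdot E_i = 1$ for $i = 1,\ldots,8$,
\[
F\cdot D' \;=\; \tfrac{1}{3}\bigl(8 + F\cdot E_9 + F\cdot E_{10}\bigr).
\]
On the other hand, using $D' \sim E_9 + E_{10} + E_{9,10}$,
\[
F\cdot D' \;=\; F\cdot E_9 + F\cdot E_{10} + F\cdot E_{9,10}.
\]
Subtracting one from the other yields the identity
\[
F\cdot E_{9,10} \;=\; \tfrac{1}{3}\bigl(8 - 2(F\cdot E_9 + F\cdot E_{10})\bigr).
\]

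Now $F$ and $E_{9,10}$ are both primitive effective isotropic, so by \cite[Lemma~2.1]{klvan} we have $F\cdot E_{9,10}\ge 0$, with equality exactly when $F\equiv E_{9,10}$. Hence $F\cdot E_9 + F\cdot E_{10}\le 4$, and since the identity forces $F\cdot E_9 + F\cdot E_{10}\equiv 1 \pmod 3$, the only possibilities are that this sum equals $1$ or $4$. If the sum is $1$, then one of the two intersection numbers vanishes, and \cite[Lemma~2.1]{klvan} again gives $F\equiv E_9$ or $F\equiv E_{10}$, which is case (i). If the sum is $4$, then $F\cdot E_{9,10}=0$, so $F\equiv E_{9,10}$, and consequently $F\cdot E_9 = F\cdot E_{10} = 2$, which is case (ii).

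There is no real obstacle here; the only point requiring care is the primitivity of $F$ (immediate from $F\cdot E_1 = 1$) needed to invoke \cite[Lemma~2.1]{klvan}, and making sure to use that $E_{9,10}$ itself is a primitive effective isotropic class, as supplied by Lemma~\ref{lemma:ceraprima}(a). Everything else is the two-way evaluation of $F\cdot D'$ and an elementary modular arithmetic check.
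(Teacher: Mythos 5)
Your proof is correct, but the decisive step is genuinely different from the one in the paper. Both arguments share the same skeleton: non-negativity of intersections of effective isotropic classes (\cite[Lemma 2.1]{klvan}) disposes of the case $F\cdot E_9=0$ or $F\cdot E_{10}=0$, and the $3$-divisibility of $E_1+\cdots+E_{10}$ forces $F\cdot(E_9+E_{10})\equiv 1 \pmod 3$. The difference lies in the upper bound. The paper proves $F\cdot E_i\leq 2$ for $i=9,10$ individually, by contradiction: if $n:=F\cdot E_9\geq 3$, the auxiliary divisor $B:=F+E_9-kE_1$ with $k=\lfloor (n-1)/2\rfloor$ satisfies $B^2\in\{2,4\}$ yet meets the seven classes $E_2,\ldots,E_8$ in at most one point each, contradicting Remark \ref{rem:decomp} (which rests on the classification in Lemma \ref{lemma:decomp}). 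You instead bound the sum $F\cdot(E_9+E_{10})\leq 4$ in one stroke, by evaluating $F\cdot D$ in two ways for the divisor $D$ of Lemma \ref{lemma:ceraprima}(a) and using $F\cdot E_{9,10}\geq 0$. Your route is shorter and purely lattice-theoretic, needing only Lemma \ref{lemma:ceraprima}(a) and \cite[Lemma 2.1]{klvan} rather than the low-$\phi$ classification; it also yields the sharper conclusion that in case (ii) one necessarily has $F\equiv E_{9,10}$, a fact the paper recovers separately through Lemma \ref{lemma:ceraprima}(b). What the paper's heavier argument buys is a demonstration of the technique, reused in the proof of Lemma \ref{lemma:ext2}, of deriving contradictions from Remark \ref{rem:decomp}. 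The only points of care in your version -- primitivity of $F$ (immediate from $F\cdot E_1=1$) and of $E_{9,10}$ (immediate from $E_1\cdot E_{9,10}=1$), both needed to convert $F\cdot E_{9,10}=0$ into $F\equiv E_{9,10}$ -- are handled correctly.
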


\begin{proof}
 If $F \cdot E_i=0$ for $i=9$ or $10$, then $F \equiv E_i$ by \cite[Lemma 2.1]{klvan} and we are done. Otherwise, as
$E_1 +\cdots+E_{10}$ is $3$-divisible by Lemma \ref{lemma:ceraprima}, we must have 
 \[
  F \cdot (E_9+E_{10}) \equiv 1 \; \mod 3 \; \; \mbox{and} \; \; F \cdot E_i>0 \; \;  \mbox{for}\; \;  i=9,10.
\]
We are therefore done if we show that 
\begin{equation}
  \label{eq:mendi3}
  F \cdot E_i \leqslant 2, \; \; \mbox{for} \; \; i \in \{9,10\}.
\end{equation}
To prove this, assume by contradiction that $n:=F \cdot E_9 \geqslant 3$, say. Set
$k= \lfloor \frac{n-1}{2} \rfloor \geqslant 1$ and $B:=F+E_9-kE_1$. Then $B^2 \in \{2,4\}$ and $E_i \cdot B=2-k \leqslant 1$ for all $i \in \{2,\ldots,8\}$, contradicting Remark \ref{rem:decomp}. This proves \eqref{eq:mendi3}, whence the lemma.
\end{proof}

\begin{lemma} \label{lemma:ext2}
  Let $F_1$ and $F_2$ be isotropic divisors such  that  $F_1 \cdot F_2=2$ and $\{E_1,\ldots,E_r\}$ be an isotropic $r$-sequence, with $0 \leqslant r \leqslant 8$, such that $F_i \cdot E_j=1$ for all $i \in \{1,2\}$, $j \in \{1,\ldots,r\}$. 
  
  Then, for $k=1$ or $2$, there is an isotropic $10$-sequence $\{F_k,E_1,\ldots,E_r,E_{r+1},\ldots,E_9\}$ such that, for $j \neq k$, $F_j \cdot E_i=1$ for $i \in\{r+1,\ldots,8\}$ and $F_j \cdot E_9=2$. 
\end{lemma}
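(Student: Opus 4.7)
My strategy is to reduce the statement to Lemma~\ref{lemma:ext1}. The key observation is that if one can produce an auxiliary primitive, effective, isotropic divisor $H$ with $H \cdot F_1 = 1$, $H \cdot F_2 = 2$, and $H \cdot E_i = 1$ for $i = 1, \ldots, r$, then $\{H, F_1, E_1, \ldots, E_r\}$ is an isotropic $(r+2)$-sequence, and $F_2$ meets its first two members in intersection $2$ and the remaining $r$ in $1$. For $r \leq 7$, so that $r+2 \in [2,9]$, Lemma~\ref{lemma:ext1} extends this to an isotropic $10$-sequence $\{H, F_1, E_1, \ldots, E_r, M_{r+3}, \ldots, M_{10}\}$ with $F_2 \cdot M_j = 1$ for every $j$. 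Relabelling $E_9 := H$ and $E_{r+i} := M_{r+2+i}$ for $i = 1, \ldots, 8 - r$ yields the required isotropic $10$-sequence, with $k = 1$.

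To construct $H$, I would extend the isotropic $(r+1)$-sequence $\{F_2, E_1, \ldots, E_r\}$, which has length $\leq 8 \neq 9$ when $r \leq 7$, to an isotropic $10$-sequence $\{L_1, \ldots, L_{10}\} := \{F_2, E_1, \ldots, E_r, K_{r+1}, \ldots, K_9\}$ via Proposition~\ref{prop:cd}. Set $D := \tfrac{1}{3}(L_1 + \cdots + L_{10})$; by Lemma~\ref{lemma:ceraprima}(a) one has $D^2 = 10$ and $\phi(D) = 3$, and we have at our disposal the primitive isotropic classes $L_{i,j} := D - L_i - L_j$. Since $F_1 \cdot F_2 = 2$, the class $F_1$ is not numerically equivalent to any $L_k$, so \cite[Lem.~2.1]{klvan} gives $F_1 \cdot L_k \geq 1$ for all $k$, whence $F_1 \cdot D \geq 4$. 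In the favorable case $F_1 \cdot D = 4$, Lemma~\ref{lemma:ceraprima}(b) forces $F_1 \equiv L_{1, k_0}$ for some $k_0 \in \{r+2, \ldots, 10\}$—the second index cannot be in $\{2, \ldots, r+1\}$ because $F_1 \cdot L_k = F_1 \cdot E_{k-1} = 1$ there. Pick any $k_1 \in \{r+2, \ldots, 10\}$ with $k_1 \neq k_0$, which exists as $9 - r \geq 2$, and define $H := L_{1, k_1}$. The overlap rule for $L_{i,j} \cdot L_{k,l}$ in Lemma~\ref{lemma:ceraprima}(a) (with the overlap $\{1, k_1\} \cap \{1, k_0\} = \{1\}$) gives $H \cdot F_1 = L_{1, k_1} \cdot L_{1, k_0} = 1$, and the remaining required intersections of $H$ are immediate from the definition of $L_{1, k_1}$.

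Two situations require additional treatment: the non-favorable case $F_1 \cdot D \geq 5$ and the boundary case $r = 8$, where Proposition~\ref{prop:cd} no longer guarantees an extension of the starting sequence. Both are handled by exploiting the symmetry between $F_1$ and $F_2$ in the hypotheses and repeating the entire construction with the roles of $F_1$ and $F_2$ exchanged, which corresponds to $k = 2$; the flexibility afforded by the statement (either $k = 1$ or $k = 2$) is essential here. To verify that at least one of the two attempts lands in the favorable situation, I would argue by contradiction in the style of the proof of Lemma~\ref{lemma:ext0}: from the simultaneous failure of both $F_1 \cdot D = 4$ and its symmetric analog, one can build an auxiliary divisor of low self-intersection and small $\phi$-invariant that is met in $\leq 2$ by too many primitive effective isotropic classes, violating Remark~\ref{rem:decomp}.

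The main obstacle is precisely this last step: controlling the intersection $F_{3-k} \cdot D$ and guaranteeing the favorable equality $F_{3-k} \cdot D = 4$ for at least one $k \in \{1, 2\}$. Once this is secured, Lemma~\ref{lemma:ceraprima}(b) delivers the $E_{i,j}$-type description of $F_{3-k}$ inside the extended $10$-sequence, which in turn powers the construction of $H$ and the final application of Lemma~\ref{lemma:ext1}.
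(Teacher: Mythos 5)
Your reduction is sound as far as it goes, and it is in fact the same reduction the paper uses: produce a primitive effective isotropic class $H$ (the paper's $A$) with $H\cdot F_1=1$, $H\cdot F_2=2$ and $H\cdot E_i=1$, then feed the $(r+2)$-sequence $\{H,F_1,E_1,\dots,E_r\}$ and the divisor $F_2$ into Lemma~\ref{lemma:ext1}. The computations in your favorable case (that $F_1\equiv L_{1,k_0}$, that $H:=L_{1,k_1}$ has the right intersections via the overlap rule of Lemma~\ref{lemma:ceraprima}(a), and the final relabelling with $E_9:=H$) are all correct.

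The genuine gap is the existence of the auxiliary class, i.e.\ precisely the step you flag as ``the main obstacle.'' Your favorable case requires not merely $F_1\cdot D\geqslant 4$ but equality, which amounts to saying that \emph{some} extension of $\{F_2,E_1,\dots,E_r\}$ to an isotropic $10$-sequence realizes $F_1$ as one of its classes $L_{1,k_0}$ --- but Proposition~\ref{prop:cd} gives no control whatsoever on the intersections of the added members $K_{r+1},\dots,K_9$ with $F_1$, so a given extension will generically have $F_1\cdot D>4$, and passing to the symmetric choice $k=2$ only trades one uncontrolled extension for another. Worse, the contradiction scheme you propose (``in the style of Lemma~\ref{lemma:ext0}, violating Remark~\ref{rem:decomp}'') cannot close this gap for small $r$: that scheme needs at least four primitive isotropic classes meeting an auxiliary divisor of small $\phi$ in $\leqslant 2$, and those classes are supplied by the $E_i$'s, of which there are $7$ when $r=8$ but possibly none when $r=0$. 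This is exactly why the paper's proof replaces your ``favorable extension'' claim by a minimization argument (choose $A$ in the set $\mathcal A$ with $A\cdot F_2$ minimal and prove the Claim $A\cdot F_2\leqslant 2$), whose verification occupies a page of case analysis relying on the minimality of $A$, Lemma~\ref{lemma:decomp} and Lemma~\ref{lemma:ceraprima}(b), followed by an induction on $r$ when $A\cdot F_2=1$. Note also that the statement you would need --- that $\{F_1,F_2,E_1,\dots,E_r\}$ extends to a maximal simple isotropic set with $F_1$ in the role of $E_{i,j}$ --- is essentially Proposition~\ref{prop:kl+}, which in the paper is \emph{deduced from} Lemma~\ref{lemma:ext2}; assuming it here would be circular. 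So the proposal correctly identifies the architecture of the proof but leaves its entire substance unproved.
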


\begin{proof}
Assume first that $r \leqslant 7$. 
By  Proposition \ref{prop:cd}, the set $\mathcal A$ of $A \in \Pic(S)$ such that 
\[ A^2=0,  \; A \cdot F_1=A \cdot E_1=\cdots= A \cdot E_r=1,
 \; A \not \equiv E_1+E_2+E_3-F_2 \; \mbox{if} \; r=3,
\]
is nonempty. Pick $A\in \mathcal A$ such that $A \cdot F_2$ is minimal. \medskip

\noindent {\bf Claim.}  $A \cdot F_2 \leqslant 2$. \medskip

Assume, to get a contradiction, that $n:=A \cdot F_2 \geqslant 3$. Let $k=\lfloor \frac{n-1}{3}\rfloor$ and set $B:=A+F_2-kF_1$. Then $2 \leqslant B^2 \leqslant 6$ and $B$ has a simple isotropic decomposition containing at least two summands.
None of these may be $F_2$, since $B-F_2=A-kF_1$ \color{black} has negative square\color{black}, unless $k=0$, in which case $B=F_2+A$ is not a simple isotropic decomposition.  

Since $F_2 \cdot B=n-2k$, the intersection of $F_2$ with each of the summands in the simple
isotropic decomposition of $B$ is smaller than $n$. 
Since $F_1 \cdot B=3$, there is at least one of these summands, say $E'$, such that $F_1 \cdot E'=1$. If $r=0$, since $F_2 \cdot E' <n$, the curve $E'$ contradicts the minimality of $A$ and finishes the proof in this case. 

If $r>0$, then, as $E_i \cdot B =2-k$ for any $i \in \{1,\ldots,r\}$, we must have $ k \leqslant 1$. \medskip

\noindent {\bf Case $k=0$.} Then $n=3$, $B \sim A+F_2$, $B^2=6$ and $\phi(B)=E_i \cdot B=2$. 
Thus, by Lemma \ref {lemma:decomp}\color{black}(ii)\color{black}, $B$ can be written as a sum of three isotropic divisors, containing all $E_i$ for  $i \in \{1,\ldots,r\}$. This implies $r\leqslant 3$. 
Since $F_i \cdot B=3$, for $i=1,2$, each summand has intersection one with $F_i$, for $i=1,2$. This implies $r=3$. Indeed, if $r<3$, then at least one of the summands of $B$, say $E'$, is different from the $E_i$s, and has $E'\cdot E_i=1$ for $i=1,\ldots, r$. Hence $E'\in \mathcal A$ and $E'\cdot F_2=1$, contradicting the minimality of $A$. Since $r=3$, we have $B \equiv E_1+E_2+E_3$. But then $A \equiv E_1+E_2+E_3-F_2$, thus $A\not\in \mathcal A$, a contradiction.  \medskip

\noindent {\bf Case $k=1$.}  One has $B \sim A+F_2-F_1$ and $\phi(B)=E_1 \cdot B=1$. Moreover $B^ 2=2n-6$, hence $(n,B^2) \in \{(4,2),(5,4),(6,6)\}$.\medskip

\noindent {\bf Subcase $(n,B^2) =(4,2)$.} As $E_i \cdot B=1$,  for $i \in \{2,\ldots,r\}$, by Lemma \ref {lemma:decomp}(i) we have $r\leqslant 2$ and, if $r=2$, we have $B\equiv E_1+E_2$. But $3=F_1\cdot B=F_1\cdot (E_1+E_2)=2$, a contradiction. Hence we have $r=1$ and $B \sim E_1+E_2'$ with ${E_2'}^2=0$ and $E_1 \cdot E_2'=1$. 

We have $F_1\cdot B=3$, and since $F_1\cdot E_1=1$, we have $F_1\cdot E'_2=2$. 
Since $F_2\cdot B=2$ and $F_2\cdot E_1=1$, we have  $F_2 \cdot E_2'=1$. 
Set $G:= F_1+F_2+E_2'$. Then $G^2=10$, $F_1 \cdot G=4$ and $\phi(G)=E_1 \cdot G = E_2' \cdot G = F_2 \cdot G=3$.  By Lemma \ref{lemma:ceraprima}(b), we have 
 $3G \sim E_1+E_2' + F_2+F'_1 + \cdots + F'_7$ for an isotropic $10$-sequence $\{E_1,E'_2,F_2,F'_1,\ldots,F'_7\}$.  As $F_1 \cdot (3G)=12$, and $F_1\cdot (E_1+E_2' + F_2)=5$, it follows that 
$F_1 \cdot (F'_1 + \cdots + F'_7)=7$, whence $F_1 \cdot F'_i=1$ for all $i \in\{1,\ldots,7\}$. Since $F_2\cdot F'_i=1$ for  all $i \in\{1,\ldots,7\}$, we find a contradiction to the minimality of $A$. \medskip

\noindent {\bf Subcase $(n,B^2) =(5,4)$.} As $E_i \cdot B=1$,  for $i \in \{2,\ldots,r\}$, by Lemma \ref {lemma:decomp}(i) we have $r=1$ and  $B \sim 2E_1+E_2'$ with ${E_2'}^2=0$ and $E_1 \cdot E_2'=1$.  
As $F_1 \cdot B=F_2 \cdot B=3$, it follows that
$F_1 \cdot E_2'=F_2 \cdot E_2'=1$, contradicting the minimality of $A$.\medskip

\noindent {\bf Subcase $(n,B^2) =(6,6)$.} As $E_1 \cdot B=1$ and $F_1 \cdot B=3$, we must have $B \equiv 3E_1+F_1$. But then we get  the contradiction
\[ 4=F_2 \cdot (A+F_2-F_1)=F_2 \cdot B = 3E_1 \cdot F_2+ F_1 \cdot F_2=5.\]

\medskip

Therefore, we have proved the claim that $A \cdot F_2 \leqslant 2$. \medskip

 Assume now that $A \cdot F_2 =2$.  By Lemma \ref{lemma:ext1}, the isotropic sequence $\{F_1,A,E_1,\ldots,E_r\}$ can be extended to an isotropic  $10$-sequence such that  $F_2 \cdot F_1=F_2 \cdot A\color{black}=2$ \color{black} and $F_2$ has intersection  one with the remaining divisors in the sequence. Hence, we are done.

 Assume next that $A \cdot F_2=1$. We then   repeat the process starting with the isotropic $(r+1)$-sequence $\{E_1,\ldots,E_r,E_{r+1}:=A\}$, unless $r+1=8$. We thus reduce to proving the  lemma when  $r=8$. 

 For the rest of the proof we therefore let $r=8$. Then  we can by  Proposition \ref{prop:cd} 
extend $\{E_1,\ldots, E_8\}$ to an isotropic $10$-sequence $\{E_1,\ldots,E_{10}\}$. We  claim that
\begin{equation}
  \label{eq:almenounozero}
 \mbox{there is an $i \in \{1,2\}$ and a $j \in \{9,10\}$ such that $F_i \equiv E_j$.}
\end{equation}
 Indeed, if not, by Lemma \ref{lemma:ext0} we must have 
all $F_i \cdot E_j=2$ for $i \in \{1,2\}, j \in \{9,10\}$. Set $B:= F_1+F_2+E_9+E_{10}-2E_1$. Then $B^2=6$ and $E_j \cdot B=2$ for all $j \in \{2,\ldots,8\}$, which is impossible by Remark \ref{rem:decomp}. This proves \eqref{eq:almenounozero}. 

By \eqref{eq:almenounozero} we have, say, $F_1 \equiv E_{10}$. Then $E_9 \not \equiv F_2$, so $F_2 \cdot E_9=2$ by Lemma \ref{lemma:ext0}.  Hence, $\{F_1,E_1,\ldots,E_8,E_9\}$ is the desired isotropic $10$-sequence.
\end{proof}

\begin{proof}[Proof of Proposition \ref {prop:kl+}.]  We first prove the first statement.  Consider the simple isotropic  set $\{E_1,\ldots,E_r\}$ satisfying \eqref {eq:int2-int}.  
If  $E_i\cdot E_j=1$  for all $i\neq j$, and if $r\neq 9$, we  apply  Proposition \ref {prop:cd}. If instead $r=9$, we  apply  Lemmas \ref {lemma:ext0} and \ref{lemma:ceraprima}(b).   If  
$E_1\cdot E_2=2$   and otherwise  $E_i\cdot E_j=1$   for $i\neq j$, we  apply  Lemmas \ref {lemma:ext2} and \ref{lemma:ceraprima}(b).   Finally, if  $E_1\cdot E_2=E_1\cdot E_3=2$  and otherwise  $E_i\cdot E_j=1$  for $i\neq j$, we apply  Lemmas \ref {lemma:ext1} and \ref{lemma:ceraprima}(b). 

 To prove the last statement, assume $\I$ satisfies the conditions therein, and that we have extended it to the maximal simple isotropic set $\I'=\{E_{1,2},E_1,\ldots, E_{10}\}$. We must have, possibly after reordering,
$\I=\{E_{1,2},E_1,E_3,\ldots,E_n\}$, for $n \leq 9$. Then $\I$ can also be extended to $\I''=\{E_{1,2},E_1,E_3,\ldots,E_9,E_{1,10},E_{2,10}\}$, which satisfies the condition that the only mutual intersections different from $1$ are $E_1 \cdot E_{1,2}=E_1 \cdot E_{1,10}=2$. 
\end{proof}

\section{Irreducibility,  unirationality and uniruledness of moduli spaces} \label{sec:bas0}

 To prove our results, we  extend a construction from \cite{Ve}. First we recall some basic facts about classical Enriques sextic surfaces in $\PP^ 3$ (see \cite {cd}). 

Fix homogeneous coordinates $(x_0:x_1:x_2:x_3)$ on $\PP^3$ and let
\[T =Z(x_0x_1x_2x_3)\]
 be the  \emph{coordinate tetrahedron}.
 We label by
$\ell_1,\ell_2,\ell_3,\ell'_1,\ell'_2,\ell'_3$ the edges of $T$, in
such a way that $\ell_1,\ell_2,\ell_3$ are coplanar, and $\ell'_i$ is
skew to $\ell_i$ for all $i=1,2,3$.

Consider the linear system $\mathcal S$ of surfaces of degree 6  that  are singular along the edges of $T$. They are called \emph {Enriques sextic surfaces}  and  have equations of the form
\begin{equation} \label{eq:sigma}
c_3(x_0x_1x_2)^2+c_2(x_0x_1x_3)^2+c_1(x_0x_2x_3)^2+c_0(x_1x_2x_3)^2+Q\color{black}(x_0,...,x_3)\cdot\color{black}x_0x_1x_2x_3=0,
\end{equation} 
where $Q\color{black}(x_0,...,x_3)\color{black}=\sum_{i \leqslant j}q_{ij}x_ix_j$ \color{black} and $c_0,\ldots,c_3,q_{ij}$ are constants. \color{black} 
This shows that $\dim(\mathcal S)=13$ and we may identify $\mathcal S$ with the $\PP^ {13}$ with  homogeneous coordinates
\[ q=(c_0:c_1:c_2:c_3:q_{00}:q_{01}:q_{02}:q_{03}:q_{11}:q_{12}:q_{13}:q_{22}:q_{23}:q_{33}). \]

If $\Sigma\in \mathcal S$ is a general surface, its normalization $\varphi:S\to \Sigma$ is an Enriques surface and  $H=\varphi^* (\mathcal O_\Sigma (1))$  is an ample divisor class with $H^ 2=6$ and $\phi(H)=2$. More precisely,  $H \sim E_1+E_2+E_3$, with the usual Notation \ref{not:int},  and the edges $\ell_i$ and $\ell'_i$  of $T$ are the images  by $\varphi$  of the curves $E_i$ and  $E_i' \sim E_i+K_S$, with $i=1,2,3$. (Recall that for a primitive, isotropic $E$, the complete linear system $|E+K_S|$ has a unique  element.)  

We  thus have a natural rational map
\[ p: \mathcal S  \dasharrow \E_{4,2},\]
assigning to a general surface $\Sigma \in \mathcal S$ the pair $(S,H)$, where $\varphi:S\to \Sigma$ is the normalization and  $H=\varphi^* (\mathcal O_\Sigma (1))$.  Composing with the forgetful map $\E_{4,2}\to \E$, we have a rational map
$ \mathcal S  \dasharrow \E$,
which is dominant. Indeed, given  a general, whence unnodal,  Enriques surface $S$, we can find a 3-isotropic sequence $\{E_1,E_2,E_3\}$.  If we set $H=E_1+E_2+E_3$, then $(S,H)\in  \E_{4,2}$ and the linear system $|H|$ determines a morphism $\varphi_H: S\to \PP^ 3$, cf., e.g., \cite[Thm. 4.6.3 and 4.7.2]{cd}, 
and, up to a change of coordinates, $\Sigma=\varphi_H(S)$ is an Enriques sextic surface. Accordingly, the map $p$ is dominant. If $(S,H)$ is a point of  $\E_{4,2}$, the fibre $p^ {-1}(S,H)$ consists of the orbit of $\Sigma=\varphi_H(S)$ via the $3$--dimensional group of projective transformations  fixing   $T$.

 Denote by $v$ the vertex of $T$ not contained
in the face spanned by $\ell_1,\ell_2,\ell_3$.
We define  $\F_i$, $i=0,1,2$,  to be  the family of \color{black} irreducible \color{black} cubic  (resp., quartic, quintic) curves $F \subset \PP^3$ \color{black} of arithmetic genus 1 \color{black} such that $v \not \in F$ and $F$ meets\\ 
\begin{inparaenum}
\item [$\bullet$] all edges of $T$ exactly once, if $i=0$;\\
\item [$\bullet$]  the edges $\ell_1$ and $\ell'_1$ of $T$ exactly twice, and the remaining edges exactly once, if $i=1$;\\
\item [$\bullet$] the edges $\ell_3$ and $\ell_3'$ of $T$ exactly once, and the remaining edges exactly twice, if $i=2$.
\end{inparaenum}

\color{black} Note that if $S$ is an Enriques surface that is
the normalization of a sextic $\Sigma \in \mathcal{S}$ containing an
elliptic curve $F$ as above,
then $\{E_1,E_2,E_3, F\}$ is a simple isotropic set on $S$,
where we still denote by $F$ the strict transform of $F\subset \Sigma$
in $S$. \color{black} In particular, since such simple isotropic sets exist with $F$ irreducible on an unnodal Enriques surface, the families $\F_i$ are non--empty. 
\color{black}

\begin{lemma} \label{lemma:irr+solouna}
(a) The family $\F_2$ is irreducible, $10$-dimensional and rational, and each $F \in \F_2$ is contained in a $3$-dimensional linear system of Enriques sextics.

(b) The family $\F_1$ is irreducible, $8$-dimensional and rational, and each $F \in \F_1$ is contained in a $5$-dimensional linear system of Enriques sextics.

(c) The family $\F_0$ is irreducible, $6$-dimensional and rational, and each $F \in \F_0$ is contained in a $7$-dimensional linear system of Enriques sextics.
\end{lemma}

\begin{proof} We first prove (b) (resp. (c)). Let $F \in \F_1$ (resp. $F \in \F_0$). 
The linear system $\mathcal S$ cuts out on $F$ a linear system of divisors with base locus (containing) $T \cap F$ and a moving part $\mathfrak g$ of degree (at most) $8$ (resp., $6$). Note that $\mathcal S$ contains the $9$--dimensional linear system formed by surfaces of the form $T+Q$, where $Q$ is a general quadric in $\PP^ 3$: looking at equation \eqref {eq:sigma}, these are the surfaces obtained by setting $c_i=0$, for $i=1,\ldots,4$. Since quadrics cut out on $F$ a complete linear system, we see that $\mathfrak g$ is complete, of dimension $7$ (resp. $5$). This proves that the linear system of Enriques sextics containing $F$ has dimension $5$ (resp., $7$). 

\color{black} We now prove the rest of (b). Given $F\in \mathcal F_1$, the intersection of $F$ with the edges of $T$ is a subscheme $Z$ of length 8 of the union of these edges off the vertices of $T$. Let $\mathcal Z$ be the Hilbert scheme of such subschemes. We have a restriction morphism
$\gamma: \mathcal F_1\to \mathcal Z$. \medskip

\noindent {\bf Claim.} The morphism $\gamma$ is injective and dominant. \medskip

Indeed, let $F$ be in $\mathcal F_1$ and let $Z=\gamma(F)$. To prove the injectivity, it suffices to prove that the linear system of quadrics passing through $Z$ has dimension 1. Suppose this is false. \color{black} Then there would be a net $\Q$ of quadrics through these $8$ points.  Fix the attention on a face $\Pi$ of $T$ containing four of these points (on three edges). \color{black} Then the quadrics in $\Q$ containing two fixed general points of $\Pi$ contain $\Pi$, because there is no conic containing the four points of $Z$ on $\Pi$ and two general points of $\Pi$. \color{black} 
Consequently, the remaining four of the eight points should be coplanar, a contradiction\color{black}, proving the injectivity. The dominance is then clear because, consequently, the quadrics containing a general $Z$ in $\mathcal Z$ form a pencil whose base locus is in $\mathcal F_1$. \medskip

Since $\mathcal Z$ is birational to \color{black} $\Sym^2(\ell_1) \x \Sym^2(\ell'_1) \x \ell_2 \x \ell'_2 \x \ell_3 \x \ell'_3 \cong \PP^8$, \color{black} the claim yields that $\mathcal F_1$ is irreducible, rational of dimension 8.  \color{black} This proves (b). 
 
\color{black} We next prove the rest of (c). \color{black}  If $F \in \F_0$, then $F$ spans a plane $\Pi_F \subset \PP^3$, which intersects the set of edges of $T$ in six \color{black} distinct \color{black} points. \color{black} These six points are the vertices of the quadrilateral cut out on $\Pi_F$ by the faces of $T$. Hence the cubic $F$ is smooth at these points, otherwise it would contain one of the sides of the above quadrilateral. Now we claim that \color{black} the set of plane cubics through these six points is a linear system of dimension 3. \color{black} Indeed, otherwise, the cubics in this linear system, of dimension $r\geq 4$, would cut out on $F$, off the six points, a $g^{r-1}_3$ with $r-1\geq 3$, which is impossible, since $F$ has arithmetic genus 1. \color{black} Thus, $\F_0$ is a $\PP^3$-bundle over \color{black} an open subset of \color{black} $|\O_{\PP^3}(1)| \cong \PP^3$, and is therefore irreducible, rational and $6$-dimensional. This proves (c).

As for item (a),  the fact that $\F_2$ is irreducible, $10$-dimensional and rational is proved in \cite[Prop. 1.1 and \S 2]{Ve}. The rest of the assertion is proved exactly in the same way we did it for cases (b) and (c) above.  \end{proof}

We next define $\F_{00}$ to be the family of ordered pairs $(F,F')$ of \color{black} irreducible \color{black} cubic curves $F, F' \subset \PP^3$ \color{black} of arithmetic genus 1 \color{black} such that $F, F' \in \F_0$ and $F$ and $F'$ intersect exactly in one point not on $T$, with distinct tangent lines. 

\color{black} Note that if $S$ is an Enriques surface that is
the normalization of a sextic $\Sigma \in \mathcal{S}$ containing a
pair $(F,F')$ of elliptic curves as above,
then $\{E_1,E_2,E_3, F, F'\}$ is a simple isotropic set on $S$,
where we still denote by $F$ and $F'$ the respective strict transforms
of $F$ and $F'$ in $S$. \color{black} As above, since such  isotropic sets of irreducible curves exist on an unnodal Enriques surface, we have that  $\F_{00}$ is non--empty. 
\color{black}

\begin{lemma} \label{lemma:irr+solouna2}
  The family $\F_{00}$ is irreducible, $11$-dimensional and rational and each pair $(F,F') \in \F_{00}$ is contained in a $2$-dimensional linear system of Enriques sextics.
\end{lemma}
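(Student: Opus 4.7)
The plan splits naturally into two parts, both following the template of Lemma~\ref{lemma:irr+solouna}.

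For the irreducibility, dimension and rationality of $\F_{00}$, I realize the family as birational to a tower of projective bundles over rational bases. A pair $(F,F')\in \F_{00}$ is specified by choosing successively: the plane $\Pi_F$ (parametrized by $(\PP^3)^*$); the cubic $F \subset \Pi_F$, varying in the $\PP^3$ of plane cubics through the six fixed points $T \cap \Pi_F$; a point $p \in F \setminus T$ ($1$-dimensional); a plane $\Pi_{F'}$ through $p$ (parametrized by a $\PP^2$); and $F' \subset \Pi_{F'}$, varying in the $\PP^2$ of plane cubics through the six points $T \cap \Pi_{F'}$ and through $p$. This yields total dimension $3+3+1+2+2=11$ and exhibits $\F_{00}$ as birational to a tower of projective bundles over rational bases, hence irreducible and rational. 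Standard general-position arguments then verify that the generic pair obtained in this way consists of two smooth cubics meeting transversely at the single point $p$, with distinct tangent lines there.

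For the second statement, let $\mathcal{S}_F \subset \mathcal{S}$ (respectively $\mathcal{S}_{F'}$, $\mathcal{S}_{F \cup F'}$) denote the sublinear system of sextics containing $F$ (respectively $F'$, both). By Lemma~\ref{lemma:irr+solouna}(c), both $\mathcal{S}_F$ and $\mathcal{S}_{F'}$ have projective dimension $7$, i.e.\ codimension $6$ in $\mathcal{S} \cong \PP^{13}$. The lower bound $\dim \mathcal{S}_{F \cup F'} \geqslant 2$ follows by noting that the $6$ linear conditions cutting out $\mathcal{S}_F$ from $\mathcal{S}$ and the $6$ cutting out $\mathcal{S}_{F'}$ share precisely the single linear functional ``evaluation at $p$'' (since $F \cap F' = \{p\}$), giving combined codimension at most $6+6-1=11$.

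The matching upper bound is the main obstacle. It is equivalent to the surjectivity of the restriction map
\[
  \rho \colon \mathcal{S}_F \longrightarrow H^0\bigl(F',\,\mathcal{O}_{F'}(6)\otimes\mathcal{O}_{F'}(-2T|_{F'}-p)\bigr),
\]
whose target is a complete linear series of degree $5$ on the elliptic curve $F'$, hence of vector space dimension $5$, and whose kernel is $\mathcal{S}_{F \cup F'}$ (any $\Sigma \in \mathcal{S}_F$ passes through $p$ and is doubly singular along $T \cap F'$, so $\rho$ is well defined). Granted surjectivity, $\mathcal{S}_{F\cup F'}$ has projective dimension $(8-5)-1=2$, as claimed. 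To prove surjectivity I plan to adapt the completeness argument from Lemma~\ref{lemma:irr+solouna}(c), by exhibiting explicit sublinear systems of $\mathcal{S}_F$ whose images together span the target; natural candidates are the sextics of the form $T+\Pi_F\cdot L$ with $L$ a linear form (giving a $\PP^3$ inside $\mathcal{S}_F$), combined with reducible sextics of the form $\Pi_F \cdot R$ with $R$ a quintic containing $\ell_1,\ell_2,\ell_3$ and singular along $\ell'_1,\ell'_2,\ell'_3$. Should a direct uniform argument prove delicate, upper semicontinuity of $(F,F')\mapsto \dim \mathcal{S}_{F \cup F'}$ on the irreducible family $\F_{00}$, combined with exhibition of a single pair attaining the lower bound, yields equality on a dense open subset of $\F_{00}$, which is all that is needed for the subsequent application to the moduli problem.
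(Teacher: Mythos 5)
The first half of your argument (irreducibility, dimension $11$, and rationality of $\F_{00}$) is essentially the paper's: both realize $\F_{00}$ as a tower of projective bundles over rational bases, with the same count $3+3+1+2+2=11$. Your lower bound $\dim \mathcal{S}_{F+F'}\geqslant 2$ is also sound and close in spirit to the paper's, which observes that containing $F'$ imposes at most $5$ conditions on the $7$-dimensional system $\mathcal{S}_F$ because $p=F\cap F'$ is already a base point of $\mathcal{S}_F|_{F'}$.

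The genuine gap is the upper bound $\dim \mathcal{S}_{F+F'}\leqslant 2$. You correctly reformulate it as surjectivity of the restriction map $\rho$, but you do not prove that surjectivity; you only name candidate subfamilies and a fallback, and both fall short. First, the family $\Pi_F\cdot R$ with $R$ a quintic containing $\ell_1,\ell_2,\ell_3$ and singular along $\ell'_1,\ell'_2,\ell'_3$ is not contained in $\mathcal{S}$: since the general plane $\Pi_F$ contains no edge of $T$, the product $\Pi_F\cdot R$ has multiplicity only $1$ along $\ell_1,\ell_2,\ell_3$, so it is not an Enriques sextic. (In fact the only quintics singular along all six edges are of the form $T+L$ with $L$ a plane, so reducible members of $\mathcal{S}$ with a general plane component give nothing beyond the $\PP^3$ of sextics $T+\Pi_F\cdot L$, whose image under $\rho$ is only $3$-dimensional, not $5$.) Second, the semicontinuity fallback still requires exhibiting one pair with $\dim\mathcal{S}_{F+F'}=2$, which you do not do, and would in any case only prove the statement for a general pair rather than for each pair as the lemma asserts. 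The paper closes this step by a different, genuinely geometric argument: writing $\ell=\Pi\cap\Pi'$, $F\cap\ell=\{a,b,p\}$ and $F'\cap\ell=\{a',b',p\}$, a general $\Sigma\in\mathcal{S}_{F+F'}$ meets $\ell$ in one further point $p'$, and the residual cubics $G=\Sigma\cap\Pi-F$ and $G'=\Sigma\cap\Pi'-F'$ are each forced through nine assigned points (the six edge points of the relevant plane plus $a',b',p'$, resp.\ $a,b,p'$), hence are uniquely determined by $p'$; so the restriction of $\mathcal{S}_{F+F'}$ to $\Pi\cup\Pi'$ is at most a pencil, and since the indeterminacy locus of the restriction map is the single surface $T\cup\Pi\cup\Pi'$, one gets $\dim\mathcal{S}_{F+F'}\leqslant 2$. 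You would need to supply an argument of comparable strength to complete your proof.
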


\begin{proof}
  The family  $\F_{00}$ can be constructed in the following way: fix a pair of general planes $\Pi$ and $\Pi'$ in $\PP^3$ intersecting along a line $\ell$, and fix a point $p \in \ell$. Consider in both $\Pi$ and $\Pi'$ the family of cubic curves passing through $p$ and the six intersection points of $\Pi$ and $\Pi'$, respectively, with the edges of $T$; each of these is a two-dimensional linear system. Varying  $\Pi$, $\Pi'$ and $p$ and  taking the two families of cubic curves, we obtain all elements of $\F_{00}$. This description shows the rationality and the dimension. 
  
  Now fix $(F,F') \in \F_{00}$ and  let $\mathcal{S}_{F+F'}$ be the linear system of Enriques sextics containing $F \cup F'$. First we prove that $\dim (\mathcal{S}_{F+F'})\geqslant 2$.  Indeed,  the linear system $\mathcal{S}_F$ of Enriques sextics containing $F$ is 7-dimensional by Lemma \ref{lemma:irr+solouna}(c). It cuts on $F'$ a linear system of divisors with base locus (containing) $T \cap F$ and $p=F \cap F'$ and
a moving part of degree (at most) $5$, hence of dimension at most $4$. Therefore, containing $F'$ imposes at most $5$ conditions on $\mathcal{S}_F$. 

Next we prove that $\dim (\mathcal{S}_{F+F'})\leqslant 2$, which will finish our proof. Consider the pair $F \subset \Pi$ and $F' \subset  \Pi'$ in $\F_{00}$, with the planes they span. Set $\ell=\Pi\cap \Pi'$ and $F \cap \ell=\{a,b,p\}$ and $F' \cap \ell = \{a',b',p\}$. Let $\Sigma \in \mathcal{S}_{F+F'}$ be general. Then $\ell$ intersects $\Sigma$ in
six points, among these are $\{a,b,a',b',p\}$; call $p'$ the sixth point. The surface $\Sigma$ intersects $\Pi$ (resp., $\Pi'$) in a cubic $G$ off $F$ (resp., $G'$ off $F'$), passing through $a'$, $b'$ and $p'$ (resp., $a$, $b$ and $p'$), in addition to the 
six intersection points of $\Pi$ (resp., $\Pi'$) with the edges of $T$. \color{black} Then 
 $G$ (resp. $G'$) is uniquely determined by the condition of passing through the six points $\Xi$ (resp. $\Xi'$) of intersection of $\Pi$ (resp. $\Pi'$) with the edges of $T$ and through $a'$, $b'$, $p'$ (resp., $a$, $b$, $p'$). Let us prove this for $G$ (the proof for $G'$ is identical). Suppose there is a pencil of cubics containing $\Xi$ and $a'$, $b'$, $p'$. Since $a'$, $b'$, $p'$ lie on $\ell$, there is a cubic in the pencil containing $\ell$. The remaining conic component of this cubic should pass through $\Xi$, and this is clearly impossibile. \color{black} Consequently, \color{black} as $\Sigma$ varies in $\mathcal{S}_{F+F'}$, the intersection \color{black} $\Sigma \cap (\Pi \cup \Pi')$  may at most vary with the point $p' \in \ell$. Thus the restriction 
$\mathcal{S}_{\Pi \cup \Pi'}$ of 
$\mathcal{S}_{F+F'}$ to $\Pi \cup \Pi'$ is at most one-dimensional. Consider the restriction map $\mathcal{S}_{F+F'}\dasharrow \mathcal{S}_{\Pi \cup \Pi'}$, which is linear, rational and surjective by assumption. Its indeterminacy locus is the unique surface  $T \cup \Pi \cup \Pi'$. Since $\dim(\mathcal{S}_{\Pi \cup \Pi'})\leqslant 1$, we deduce that $\dim(\mathcal{S}_{F+F'})\leqslant 2$, as desired. \end{proof}

We next define $\F_{0i}$, for $i=1,2$, to be the family of ordered pairs of
\color{black} irreducible \color{black}  curves $(F,F')$ in $\PP^3$ \color{black} of arithmetic genus 1 \color{black} such that $F \in \F_0$, $F' \in \F_1$ and $F$ and $F'$ intersect exactly in $i$ points not on $T$, with distinct tangent lines.

\color{black} Note that, as before, if $S$ is an Enriques surface that is
the normalization of a sextic $\Sigma \in \mathcal{S}$ containing a
pair $(F,F')$ of elliptic curves as above,
then $\{E_1,E_2,E_3, F, F'\}$ is a simple isotropic set on $S$,
where we still denote by $F$ and $F'$ the respective strict transforms
of $F$ and $F'$ in $S$.  \color{black} Since such  isotropic sets of irreducible curves exist on an unnodal Enriques surface, we have that  each $\F_{0i}$ is non--empty. 
\color{black}

\begin{lemma} \label{lemma:irr+solouna3}
  The family $\F_{0i}$ is irreducible, uniruled and $(14-i)$-dimensional and each pair $(F,F') \in \F_{0i}$ is contained in a linear system $\mathcal{S}_{F+F'}$ of Enriques sextics of dimension at least $i-1$. If $(F,F') \in \F_{0i}$ is contained in an Enriques sextic $\Sigma$ whose normalization $S$ is an Enriques surface, then $\mathcal{S}_{F+F'}$ has dimension exactly $i-1$, unless $F+F'$  is contained in only {\em nodal} Enriques sextics (that is, Enriques sextics whose normalizations \color{black} are nodal\color{black}). 
\end{lemma}

\begin{proof}
  We have a natural dominant map $q:\F_{0i} \to \F_1 \x (\PP^3)^{\vee}$ sending the pair $(F,F')$ to $F' \in \F_1$ and the plane $\Pi_F$ spanned by $F$ in $(\PP^3)^{\vee}$. 

For $i=1$, the fiber of $q$ over $(F',\Pi)$  consists of the union of four $2$-dimensional linear systems of cubics in $\Pi$ through the six intersection points of $\Pi$ with the edges of $T$ and one of the four intersection points of $\Pi$ with $F'$. This proves the irreduciblity because  the monodromy action of the four intersection points is the symmetric group \color{black}(see \cite[Lemma on p. 111]{ACGH}), \color{black} and shows also the uniruledness. The dimension also follows easily.

For $i=2$, the fiber of $q$ over $(F',\Pi)$  consists of the union of six $1$-dimensional linear systems of cubics in $\Pi$ through the six intersection points of $\Pi$ with the edges of $T$ and two of the four intersection points of $\Pi$ with $F'$. As above, this  proves  irreduciblity, uniruledness and the dimension.

The dimension of the linear system of Enriques sextics $\mathcal{S}_{F'}$ containing a fixed $F' \in \F_1$ is $5$ by Lemma \ref{lemma:irr+solouna}(b). Containing an additional cubic $F \in \F_0$ intersecting $F'$ in $i$ points, imposes at most $6-i$ conditions, arguing as in the proof of Lemma \ref{lemma:irr+solouna2}. Therefore, the linear system of Enriques sextics $\mathcal{S}_{F+F'}$ containing a pair $(F,F') \in \F_{0i}$ has dimension at least $5-(6-i)=i-1$.

Let $\Sigma$ be an Enriques sextic containing $F +F'$  such that its normalization $\varphi: S\to \Sigma$ is an  unnodal  Enriques surface.  
The linear system $\mathcal{S}$ cuts on $\Sigma$ a linear system whose pull--back on $S$ via $\varphi$  is the sublinear system of $|6(E_1+E_2+E_3)|$ with base locus twice the sum of the pullback of the edges of the tetrahedron, which is 
\[2\Big(E_1+E_2+E_3+(E_1+K_S)+(E_2+K_S)+(E_3+K_S)\Big) \sim 4(E_1+E_2+E_3).\]
Hence, the free part is
$|2(E_1+E_2+E_3)|$. So we have a  linear, rational restriction  map 
\[
\mathcal{S}_{F+F'}\dasharrow   |B|, \; \; \mbox{with} \; \; B:=2(E_1+E_2+E_3)-(F+F')
\]
 whose indeterminacy locus is just the surface $\Sigma$.

We have  $B^2=2(i-2)$. If  $i=1$ and $S$ is  unnodal, then  $|B|=\emptyset$,   which shows that $\mathcal{S}_{F+F'}=\{\Sigma\}$ has dimension 0, as wanted.  If $i=2$, then $B^2=0$ and $E_1 \cdot B=1$, hence $h^0(B)=1$ by
Riemann--Roch.   This yields
$\dim(\mathcal{S}_{F+F'})\leqslant 1$, proving the assertion. 
\end{proof}

Consider now the incidence varieties
\[ \G_i :=\{ (F,\Sigma) \in \F_i \x \SS \; | \; F \subset \Sigma\},\]
for $i=0,1,2$, 
and
\[ \G_{00} :=\{ (F,F',\Sigma) \in \F_{00} \x \SS \; | \; F+F' \subset \Sigma\},\]
which are irreducible, rational and $13$-dimensional, by Lemmas \ref{lemma:irr+solouna} and \ref{lemma:irr+solouna2}. Similarly, for $i=1,2$, let
\[ \G_{0i} :=\{ (F,F',\Sigma) \in \F_{0i} \x \SS \; | \; \Sigma \; \mbox{is unnodal}, \; F+F' \subset \Sigma\},\]
which are irreducible, uniruled and $13$-dimensional, by Lemma \ref{lemma:irr+solouna3}.

\begin{prop}\label{prop:domin} If $\G$ is any of the incidence varieties $\G_i$, for $i=0,1,2$, $\G_{00}$, $ \G_{0i}$, for $i=1,2$, the obvious projection  $\pi: \G\to \SS$ is dominant, hence generically finite. Accordingly, if $\xi\in \G$ is a general point, then $\Sigma=\pi(\xi)$ is a general element of $\SS$ and its normalization $S$ is a general Enriques surface.
\end{prop}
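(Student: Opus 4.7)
The plan is to reduce dominance to the nonemptiness of a single general fiber via a dimension count, then produce the required curve(s) on the normalization of a general Enriques sextic from the isotropic $10$-sequences supplied by Proposition \ref{prop:cd}. By Lemmas \ref{lemma:irr+solouna}--\ref{lemma:irr+solouna3} each of the incidence varieties $\G$ is irreducible of dimension $13 = \dim \SS$, so $\pi : \G \to \SS$ is dominant if and only if it is generically finite, and for this it suffices to exhibit a single $\Sigma \in \SS$ for which $\pi^{-1}(\Sigma)$ is nonempty; such a $\Sigma$ will then automatically be general.

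To produce such a $\Sigma$, I would start from a general, hence unnodal, Enriques surface $S$ equipped with an isotropic $3$-sequence $\{E_1, E_2, E_3\}$, set $H := E_1+E_2+E_3$, and realize $(S,H) \in \E_{4,2}$ via $\varphi_H : S \to \Sigma \subset \PP^3$ so that $\ell_i = \varphi_H(E_i)$ and $\ell_i' = \varphi_H(E_i + K_S)$. By Proposition \ref{prop:cd} one extends $\{E_1, E_2, E_3\}$ to an isotropic $10$-sequence $\{E_1, \ldots, E_{10}\}$ and considers the corresponding classes $E_{i,j}$ of Lemma \ref{lemma:ceraprima}. Since $S$ is unnodal, every primitive effective isotropic divisor is represented by a smooth irreducible curve of arithmetic genus $1$, whose image in $\PP^3$ is an irreducible elliptic curve of degree equal to its intersection with $H$. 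In each case I would then make the following choice of class(es), whose image under $\varphi_H$ should land in $\F_i$, $\F_{00}$, or $\F_{0i}$ respectively: for $\G_0$, take $E_4$, with intersection numbers $(1,1,1)$ against $(E_1, E_2, E_3)$; for $\G_1$, take $E_{1,4}$, with intersections $(2,1,1)$; for $\G_2$, take $E_{1,2}$, with intersections $(2,2,1)$; for $\G_{00}$, take the pair $(E_4, E_5)$ with $E_4 \cdot E_5 = 1$; for $\G_{01}$, take $(E_4, E_{1,5})$ with $E_4 \cdot E_{1,5} = 1$; for $\G_{02}$, take $(E_4, E_{1,4})$ with $E_4 \cdot E_{1,4} = 2$. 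Using $E_i \cdot (E_j + K_S) = E_i \cdot E_j$, one reads off at once that the intersection numbers with the edges $\ell_i$ and $\ell_i'$ match exactly the definitions of $\F_0$, $\F_1$, $\F_2$.

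The main obstacle, and the heart of the argument, is the verification of the generic position conditions needed to conclude that these image curves actually lie in the corresponding family: the image must be smooth, of the asserted degree and genus, avoid the vertex $v$, meet each edge of $T$ in exactly the prescribed number of reduced points, and---for the pair cases---have the correct number of intersection points off $T$ with distinct tangents; for the families $\G_{0i}$ one additionally needs $\Sigma$ to be unnodal. All of these conditions are open on $S$ and on the choice of isotropic classes, and none is forced to fail by any numerical constraint: they amount to transversality of intersections among the isotropic curves we have chosen, which on a general unnodal Enriques surface are in general linear position. Once these checks are made, $\pi$ is dominant and, being a map between irreducible varieties of the same dimension, automatically generically finite, so for a general $\xi \in \G$ the surface $\Sigma = \pi(\xi)$ is general in $\SS$. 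The concluding statement about the normalization $S$ being a general Enriques surface then follows from the already-established dominance of the composition $\SS \dashrightarrow \E$.
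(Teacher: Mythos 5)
Your proof takes essentially the same route as the paper's, which treats only $\G_{00}$ explicitly (via an isotropic $5$-sequence, $H=E_1+E_2+E_3$ and the images of $E_4,E_5$) and declares the remaining cases similar; your explicit choices of classes from an extended isotropic $10$-sequence ($E_4$ for $\G_0$, $E_{1,4}$ for $\G_1$, $E_{1,2}$ for $\G_2$, $(E_4,E_5)$, $(E_4,E_{1,5})$, $(E_4,E_{1,4})$ for the pair cases) all have the correct intersection numbers and fill in what the paper leaves implicit. One caveat: your opening reduction --- that it suffices to exhibit a \emph{single} $\Sigma\in\SS$ with nonempty fiber, which is then ``automatically general'' --- is false as a principle (the image of $\pi$ could a priori be a proper subvariety containing that $\Sigma$); what actually makes the argument work, and what the paper uses, is that your $\Sigma=\varphi_H(S)$ is constructed from a \emph{general} Enriques surface, and the map $p:\SS\dasharrow\E_{4,2}$ is dominant with $3$-dimensional fibres, so $\Sigma$ is general in $\SS$ and the fibre of $\pi$ over a general point is nonempty. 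With that correction the proof is the paper's proof, at the same (admittedly terse) level of detail concerning the general-position checks.
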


\begin{proof} We prove the assertion for $\G=\G_{00}$, the proof in the other cases being similar.

Let $S$ be a general Enriques surface. There is  an isotropic $5$--sequence $\{E_1,\ldots,E_5\}$ on $S$.  Set $H=E_1+E_2+E_3$. Then $\varphi_H: S\to \Sigma\subset \PP^ 3$ maps $S$, up to a projective transformation, to a general surface in 
$\mathcal S$. Moreover $E_4$, $E_5$ are mapped to two elliptic cubic curves $F,F'$ meeting at a point. This proves the assertion. \end{proof}

We now define various maps from these incidence varieties to some $\E_{g,\phi}$s, for various $g$ and $\phi$, which we eventually prove to be dominant, establishing irreducibility and unirationality or uniruledness. 

Consider a general element $(F,\Sigma)$ of $\G_i$, for $i=0,1,2$. Then the normalization $S$ of  $\Sigma$ is an Enriques surface and on $S$ we have the three curves $E_1,E_2,E_3$, plus the strict transform of $F$ which, by abuse of notation, we still denote by $F$. Similar convention we introduce for  $\G_{0i}$, for  $i=0,1,2$.

Fix four nonnegative integers $\alpha_1,\alpha_2,\alpha_3,\alpha_4$, at least two nonzero.  Then, for each $i=0,1,2$,  and $\varepsilon=0,1$,  we have a rational map
\[ f^i_{\alpha_1,\alpha_2,\alpha_3, \alpha_4; \varepsilon}: \G_i \dasharrow \E_{g,\phi}\]
sending the general point $(F, \Sigma)\in \G_i$
to $\left(S,\O_{S}(\alpha_1E_1+\alpha_2E_2+\alpha_3E_3+\alpha_4F+\varepsilon K_S)\right)$, where $g=p_a(\alpha_1E_1+\alpha_2E_2+\alpha_3E_3+\alpha_4F)$  and $\phi=\phi(\alpha_1E_1+\alpha_2E_2+\alpha_3E_3+\alpha_4F)$.

Next, fix five positive integers $\alpha_1,\ldots,\alpha_5$, at least two nonzero. For each $i=0,1,2$,  and $\varepsilon=0,1$,  we have a rational map
\[ f^{0i}_{\alpha_1,\alpha_2,\alpha_3, \alpha_4,\alpha_5; \varepsilon}: \G_{0i} \dasharrow \E_{g,\phi}\]
sending   a general  $(F,F', \Sigma)\in \G_{0i}$  to $\left(S,\O_{S}(\alpha_1E_1+\alpha_2E_2+\alpha_3E_3+\alpha_4F+\alpha_5F'+\varepsilon K_S)\right)$, where
$g=p_a(\alpha_1E_1+\alpha_2E_2+\alpha_3E_3+\alpha_4F+\alpha_5F')$ and $\phi:=\phi(\alpha_1E_1+\alpha_2E_2+\alpha_3E_3+\alpha_4F+\alpha_5F')$.

Let now $(F,\Sigma) \in \G_2$  be general  and consider  the curves $E_1,E_2,E_3, F $  on $S$. Then $E_1+E_2+F$ satisfies the conditions of Lemma \ref{lemma:ceraprima}(b). Since $E_i \cdot (E_1+E_2+F)=3$, for $i=1,2,3$, we obtain an isotropic $10$-sequence $\{E_1,E_2,E_3,E_4,\ldots, E_{10}\}$ such that 
\[ 3(E_1+E_2+F) \sim E_1 + \cdots + E_{10}.\]
\color{black} In particular, $F=E_{1,2}$. \color{black}
Note that each $E_i$ for $i \geqslant 4$ is uniquely determined up to numerical equivalence class and permutation of indices; in particular, $E_4+\cdots+E_{10} \sim
2E_1+2E_2+3F-E_3$ is a well-defined element of $\Pic (S)$. 
For any 
five nonnegative integers $\alpha_0,\ldots,\alpha_4$ such that at least one among $\alpha_0,\ldots,\alpha_3$ is zero, we can consider the rational map
\[ h_{\alpha_0,\alpha_1,\alpha_2,\alpha_3, \alpha_4; \varepsilon}: \G_2 \dasharrow \E_{g,\phi}\]
sending $(F,\Sigma)\in \G_2$ to $(S,\O_{S}(\alpha_0F+\alpha_1E_1+\alpha_2E_2+\alpha_3E_3+\alpha_4(E_4+\cdots+E_{10})+\varepsilon K_S))$, where
$g:=p_a(\alpha_0F+\alpha_1E_1+\alpha_2E_2+\alpha_3E_3+\alpha_4(E_4+\cdots+E_{10}))$ and 
 $\phi:=\phi(\alpha_0F+\alpha_1E_1+\alpha_2E_2+\alpha_3E_3+\alpha_4(E_4+\cdots+E_{10}))$.

Finally, let $(F,F',\Sigma) \in \G_{02}$ be a general point and consider $E_1,E_2,E_3, F,F'$ curves in $S$. Then $F+F'+E_1$ satisfies the conditions of Lemma \ref{lemma:ceraprima}(b). Since $F\cdot (E_1+F+F')=
E_i \cdot (E_1+F+F')=3$, for $i=1,2,3$, we obtain an isotropic $10$-sequence $\{E_1,E_2,E_3,E_4:=F,E_5,\ldots, E_{10}\}$ such that 
\[ 3(E_1+F+F') \sim E_1 + \cdots + E_{10}.\]
\color{black} In particular, $F'=E_{1,4}.$ \color{black}
Note that each $E_i$ for $i \geqslant 5$ is uniquely determined up to numerical equivalence class and permutation of indices; in particular, $E_5+\cdots+E_{10} \sim
2E_1+2F+3F'-E_2-E_3$ is a well-defined element of $\Pic (S)$. For any 
six nonnegative integers $\alpha_0,\ldots,\alpha_5$ such that at least one among $\alpha_0,\ldots,\alpha_4$ is zero, we have a map 
\[ h^0_{\alpha_0,\alpha_1,\alpha_2,\alpha_3, \alpha_4,\alpha_5; \varepsilon}: \G_{02} \dasharrow \E_{g,\phi}\]
sending $(F,F',\Sigma)$ to 
\begin{small}
$(S,\O_{S}(\alpha_0F'+\alpha_1E_1+\alpha_2E_2+\alpha_3E_3+\alpha_4F+\alpha_5(E_5+\cdots+E_{10})+\varepsilon K_S)),$
\end{small} where
$g:=p_a(\alpha_0F'+\alpha_1E_1+\alpha_2E_2+\alpha_3E_3+\alpha_4E_4+\alpha_5(E_5+ \cdots+E_{10}))$ and 
 $\phi:=\phi(\alpha_0F'+\alpha_1E_1+\alpha_2E_2+\alpha_3E_3+\alpha_4E_4+\alpha_5(E_5+\cdots+E_{10}))$.

Our main results, Theorem \ref{thm:dom} and Theorem \ref{thm:dom2}, are, respectively, immediate consequences of the following two propositions \color{black} and the fact that the varieties  $\mathcal G_i$ and $\mathcal G_{0,i}$ are irreducible and unirational or uniruled, as mentioned above. \color{black}  Let as usual $\varepsilon\in\{0,1\}$. 

\begin{proposition} \label{prop:dom} 
Let $i \in \{0,1,2\}$ and $\alpha_1,\ldots,\alpha_4\in \mathbb{N}$, at
least two nonzero.
  The map $f^i_{\alpha_1,\alpha_2,\alpha_3, \alpha_4; \varepsilon }$ 
is dominant onto the locus of pairs $(S,H) \in \E_{g,\phi}$ admitting
the same simple decomposition type as 
$\alpha_1E_1+\alpha_2E_2+\alpha_3E_3+\alpha_4F+\varepsilon K_S$.

Let $i \in \{0,1,2\}$ and $\alpha_1,\ldots,\alpha_5\in \mathbb{N}$, at
least two nonzero.
The map $f^{0i}_{\alpha_1,\alpha_2,\alpha_3, \alpha_4,\alpha_5; \varepsilon }$ 
is dominant onto the locus of pairs $(S,H) \in \E_{g,\phi}$  admitting the same  simple decomposition type as $\alpha_1E_1+\alpha_2E_2+\alpha_3E_3+\alpha_4F+\alpha_5F'+\varepsilon K_S$.
\end{proposition}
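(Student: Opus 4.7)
The plan is to combine a dimension count with an explicit inverse construction. Each of $\G_i$ (resp.\ $\G_{0i}$) is irreducible of dimension $13$ by Lemmas \ref{lemma:irr+solouna}--\ref{lemma:irr+solouna3}, and the fibres of the relevant map $f$ over a general point in $\E_{g,\phi}$ consist of the orbit of the associated Enriques sextic under the $3$-dimensional group of projective transformations fixing $T$, exactly as for the map $p:\SS\dasharrow\E_{4,2}$ recalled at the beginning of \S\ref{sec:bas0}. Hence $\dim\,\mathrm{im}(f)=10=\dim\E_{g,\phi}$, so the image is dense in a single irreducible component of $\E_{g,\phi}$. It remains to identify this component with the locus of the prescribed decomposition type.

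First I would check that $\mathrm{im}(f)$ is contained in that locus. For a general $\xi=(F,\Sigma)\in\G_i$, Proposition \ref{prop:domin} guarantees that the normalisation $\varphi\colon S\to\Sigma$ is a general (hence unnodal) Enriques surface; the pullbacks of the three coplanar edges of $T$ form an isotropic $3$-sequence $\{E_1,E_2,E_3\}$ with $E_1+E_2+E_3\sim\varphi^*\O_\Sigma(1)$, and the strict transform of $F$ is a smooth elliptic curve, hence a primitive effective isotropic divisor on $S$. Its intersection numbers with $E_j$ are determined by the incidences of $F\subset\PP^3$ with the edges of $T$ (using $F\cdot(E_j+K_S)=F\cdot E_j$, since $K_S\cdot D=0$ for every $D\in\Pic(S)$ on an Enriques surface), and these match the defining conditions of $\F_i$. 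Thus $f^i_{\alpha_1,\ldots,\alpha_4;\varepsilon}(\xi)$ has the asserted simple decomposition type. For the $\G_{0i}$ case one further notes that $F\cdot F'=i$ on $S$ since the two image curves meet transversally in exactly $i$ points off $T$ by definition.

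For the converse and heart of the argument, take any $(S,H)$ in the locus with $S$ unnodal, and write $H\sim\alpha_1E_1+\alpha_2E_2+\alpha_3E_3+\alpha_4F+\varepsilon K_S$ (resp.\ also with $+\alpha_5F'$). Since $S$ is unnodal and $\{E_1,E_2,E_3\}$ is an isotropic $3$-sequence, $E_1+E_2+E_3$ is ample with $\phi=2$ and defines the birational morphism of $S$ onto an Enriques sextic $\Sigma\subset\PP^3$ recalled at the beginning of \S\ref{sec:bas0}, singular along the edges of a coordinate tetrahedron. The image $\overline F$ of $F$ (resp.\ the pair $\overline F,\overline{F'}$) is a smooth elliptic curve of degree $F\cdot(E_1+E_2+E_3)$ whose incidences with the edges of $T$ are dictated by the prescribed numbers $F\cdot E_j$. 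A direct check then shows that $\overline F\in\F_i$ (resp.\ $(\overline F,\overline{F'})\in\F_{0i}$), exhibiting $(S,H)$ as the image of a point of $\G_i$ (resp.\ $\G_{0i}$), and completing the proof of dominance.

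The main obstacle is precisely this last geometric verification: that the image of $F$ under $\varphi_{E_1+E_2+E_3}$ is indeed a smooth irreducible elliptic curve meeting the edges of $T$ with exactly the required multiplicities, avoiding the distinguished vertex $v$, and---in the two-curve case---meeting the other image curve transversally in the correct number of points off $T$. This relies crucially on $S$ being unnodal (so that every primitive effective isotropic class on $S$ is represented by a smooth elliptic curve, and the pertinent linear systems are base-point free) together with standard properties of the Enriques sextic model.
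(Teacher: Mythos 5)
Your overall strategy is the same as the paper's: realize $S$ as an Enriques sextic via $|E_1+E_2+E_3|$ and check that the remaining isotropic divisor(s) land in $\F_i$ (resp.\ $\F_{0i}$). But there is a genuine gap in the converse direction, precisely at the point where you ``write $H\sim\alpha_1E_1+\alpha_2E_2+\alpha_3E_3+\alpha_4F+\varepsilon K_S$''. The hypothesis only says that $(S,H)$ admits the same simple decomposition type, and by Definition \ref{def:tutte1} a simple isotropic decomposition records only the summands with \emph{positive} coefficients. Since the proposition allows any two of the $\alpha_j$ to vanish, what you actually get on $S$ may be, say, $H\sim\alpha_1 G_1+\alpha_4 G_4+\varepsilon K_S$ with only $G_1\cdot G_4$ prescribed; to run the Enriques-sextic argument you must still produce two further primitive effective isotropic divisors completing $G_1$ to an isotropic $3$-sequence $\mathfrak{I}$ with $G_4\in\widetilde{\F}_i(\mathfrak{I})$. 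This is exactly what Corollary \ref{cor:kl+} supplies, resting on the nontrivial extension result Proposition \ref{prop:kl+} (proved via Lemmas \ref{lemma:ext1}, \ref{lemma:ext0} and \ref{lemma:ext2}), and the paper explicitly flags this as the reason the whole of Section \ref{sec:bas1} is needed. Your proposal never invokes any such result, and the cases with vanishing coefficients are not marginal: most of the decomposition types in Lemma \ref{lemma:decomp} (e.g.\ $\frac{g-1}{2}E_1+E_{1,2}$, of length $2$) are of this kind, so without this step the proof covers only the types of full length.

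Two secondary remarks. First, the opening dimension count is dispensable, and the claim that the general fibre of $f$ is exactly the $3$-dimensional orbit of $\Sigma$ would itself require an argument (a priori the same $(S,H)$ could arise from several isotropic $3$-sequences); once $\G_i$ is known to be irreducible and every general point of the locus is shown to lie in the image, dominance onto that locus follows directly. Second, the geometric verification you single out as ``the main obstacle'' is comparatively routine for unnodal $S$ and is disposed of in the paper by citing \cite[Thm.\ 4.6.3 and 4.7.2]{cd}; the real content of the proposition lies in the lattice-theoretic preparation described above.
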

 
\begin{proposition} \label{prop:dom2} 
Let $\alpha_0,\ldots, \alpha_4 \in \mathbb{N}$, with at least one among 
$\alpha_0,\ldots,\alpha_3$  being  nonzero.
  The map $h_{\alpha_0,\alpha_1,\alpha_2,\alpha_3, \alpha_4; \varepsilon }$
is dominant onto the locus of pairs $(S,H) \in \E_{g,\phi}$ admitting
the same simple decomposition type as
$\alpha_0F+\alpha_1E_1+\alpha_2E_2+\alpha_3E_3+\alpha_4(E_4+\cdots+E_{10})+\varepsilon K_S$.

Let $\alpha_0,\ldots, \alpha_5 \in \mathbb{N}$, with at least one among 
$\alpha_0,\ldots,\alpha_4$  being  nonzero.
The map $h^{0}_{\alpha_0,\alpha_1,\alpha_2,\alpha_3, \alpha_4,\alpha_5; \varepsilon }$ 
is dominant onto the locus of pairs $(S,H) \in \E_{g,\phi}$  admitting the same  simple decomposition type as $\alpha_0F'+\alpha_1E_1+\alpha_2E_2+\alpha_3E_3+\alpha_4F+\alpha_5(E_5+\cdots+E_{10})+\varepsilon K_S$.
\end{proposition}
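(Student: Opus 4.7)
The plan parallels the proof of Proposition~\ref{prop:dom}: for a general pair $(S,H)$ in the target locus we shall exhibit a preimage in $\G_2$ (respectively, $\G_{02}$). The containment of the image of $h$ (resp., $h^0$) in the stated locus is essentially automatic from the construction, since by Lemma~\ref{lemma:ceraprima}(b) the relation $3(E_1+E_2+F)\sim E_1+\cdots+E_{10}$ (resp., $3(E_1+F+F')\sim E_1+\cdots+E_{10}$), combined with the intersection numbers forced by $F\in \F_2$ (resp., $(F,F')\in \F_{02}$), produces an isotropic $10$-sequence on $S$ in which $F$ plays the role of $E_{1,2}$ (resp., $F$ plays that of $E_4$ and $F'$ that of $E_{1,4}$ in the notation of Lemma~\ref{lemma:ceraprima}), so that substitution into the expression for $H$ gives a simple isotropic decomposition of the prescribed type.

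Conversely, for a general $(S,H)$ in the target locus, Corollary~\ref{cor:kl+} supplies an isotropic $10$-sequence $\{E_1,\ldots,E_{10}\}$ on $S$ with $H$ expressed in the prescribed form modulo $\varepsilon K_S$, the parity shift $\varepsilon$ being dictated by the $2$-divisibility behavior of $H+K_S$ via Remark~\ref{rem:eps}. By genericity $S$ is unnodal, so $|E_1+E_2+E_3|$ is base-point-free and defines a birational morphism $\varphi:S\to \PP^3$ onto an Enriques sextic; after a projective change of coordinates we may suppose $\varphi(S)=\Sigma\in \SS$ with tetrahedron $T$ of edges $\ell_i=\varphi(E_i)$ and $\ell'_i=\varphi(E_i+K_S)$. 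For the first assertion, set $F:=\varphi(E_{1,2})$: it has degree $E_{1,2}\cdot (E_1+E_2+E_3)=5$ and incidence pattern $(2,2,2,2,1,1)$ along the six edges (computed as $E_{1,2}\cdot E_i$ and $E_{1,2}\cdot (E_i+K_S)$), so $F\in\F_2$ and $(F,\Sigma)\in\G_2$ maps to $(S,H)$ under $h$. For the second assertion, set $F:=\varphi(E_4)$, a smooth cubic with all six incidences equal to $1$ and hence in $\F_0$, and $F':=\varphi(E_{1,4})$, a smooth quartic with incidences $(2,2,1,1,1,1)$ and hence in $\F_1$; Lemma~\ref{lemma:ceraprima}(a) gives $F\cdot F'=E_4\cdot E_{1,4}=2$, so $F$ and $F'$ meet in exactly $2$ points on $S$, which generically lie off $T$ with distinct tangents, placing $(F,F',\Sigma)$ in $\G_{02}$ as a preimage under $h^0$.

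The main technical obstacle I foresee is verifying simultaneously the various generic geometric conditions: smoothness and irreducibility of each image curve, transversality with $T$ in the predicted multiplicity pattern, avoidance of the distinguished vertex $v$, and, in the $h^0$ case, the two intersection points of $F$ with $F'$ lying off $T$ with distinct tangents. These are open conditions on the moduli, handled by exhibiting a single good example in each component and invoking openness; the irreducibility of the target component is supplied by Proposition~\ref{prop:desccomp} at the numerical level and transferred to $\E_{g,\phi}$ via the \'etale double cover $\rho$ together with Lemma~\ref{lemma:H2div}.
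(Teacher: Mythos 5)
Your proposal is correct and follows essentially the same route as the paper: apply Corollary \ref{cor:kl+} to put $H$ in the canonical form attached to an isotropic $10$-sequence, use genericity (unnodality) so that $|E_1+E_2+E_3|$ realizes $S$ as an Enriques sextic, and check that the image of $E_{1,2}$ (resp.\ of the pair $(E_4,E_{1,4})$) lands in $\F_2$ (resp.\ $\F_{02}$), producing the required preimage in $\G_2$ (resp.\ $\G_{02}$). The degree and incidence computations you spell out are exactly what the paper's shorter argument delegates to the observation that $E_{1,2}\in\widetilde{\F}_2(E_1,E_2,E_3)$ and $(E_4,E_{1,4})\in\widetilde{\F}_{02}(E_1,E_2,E_3)$ together with the proof of Proposition \ref{prop:dom}.
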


The proofs of Propositions~\ref{prop:dom} and \ref{prop:dom2} require
the results of Section~\ref{sec:bas1}  (more precisely, Corollary \ref{cor:kl+})  to make sure we have enough
isotropic divisors in the decompositions of $H$ to map $S$ to an
Enriques sextic in the appropriate way.
For instance, if
$H  \sim  \alpha_1 E_1+\alpha_2 E_{1,2}$, one 
writes $H  \sim  \alpha_1 E_1+\alpha_2 E_{1,2}+0E_2+0E_3$ so that
$E_1+E_2+E_3$ defines a mapping of $S$ to an Enriques sextic
(following Notation~\ref{not:int} everywhere).

We use the following definition in the proofs of
Propositions~\ref{prop:dom} and \ref{prop:dom2}. 

\begin{definition} \label{def:triple}
 For an isotropic $3$-sequence $\mathfrak{I}=\{E_1,E_2,E_3\}$ on the Enriques surface $S$, we let $\widetilde{\F}_i(\mathfrak{I})$, $i=0,1,2$,   be  the set of all primitive, isotropic divisors $F$ on $S$ satisfying
\[
(F \cdot E_1, F \cdot E_2, F \cdot E_3)= 
\begin{cases} (1,1,1) & \mbox{if} \; \; F \in \widetilde{\F}_0 (\mathfrak{I}), \\
              (2,1,1) & \mbox{if} \; \; F \in \widetilde{\F}_1 (\mathfrak{I}), \\
(2,2,1) & \mbox{if} \; \; F \in \widetilde{\F}_2 (\mathfrak{I})
\end{cases}
\]
and  $\widetilde{\F}_{0i} (\mathfrak{I})$, $i=0,1,2$,  the set of all pairs $(F,F')$ of primitive, isotropic divisors $F, F'$ on $S$ such that
$F \in \widetilde{\F}_0 (\mathfrak{I})$ and
\begin{itemize}
\item $F' \in \widetilde{\F}_0 (\mathfrak{I})$ and $F \cdot F'=1$, if $i=0$,
\item $F' \in \widetilde{\F}_1 (\mathfrak{I})$ and $F \cdot F'=1$, if $i=1$,
\item $F' \in \widetilde{\F}_1 (\mathfrak{I})$ and $F \cdot F'=2$, if $i=2$.
\end{itemize}
\end{definition}

\begin{proof}[Proof of Proposition \ref{prop:dom}]
  Let $(S,H)$ be as in either of the statements of the proposition. In
particular, $H$ admits a simple decomposition type of length $n$, with
$2 \leqslant n \leqslant 5$.  By   Corollary \ref{cor:kl+}, if $n
\leqslant 4$, we may write $H  \sim  
\alpha_1E_1+\alpha_2E_2+\alpha_3E_3+\alpha_4F+\varepsilon K_S$ with $\mathfrak
I=\{E_1, E_2, E_3\}$ an isotropic $3$-sequence and $F \in
\widetilde{\F}_i(\mathfrak I)$,
possibly allowing some of the $\alpha_i$s to be $0$.  If $n=5$, we may
write $H  \sim  
\alpha_1E_1+\alpha_2E_2+\alpha_3E_3+\alpha_4F+\alpha_5F'+\varepsilon K_S$ with $(F,F')
\in \widetilde{\F}_{0i}(\mathfrak I)$.
We may assume $(S,H)$ to be general, in particular, $S$ is
unnodal. Then by \cite[Thm. 4.6.3 and 4.7.2]{cd} the complete linear
system $|E_1+E_2+E_3|$ maps $S$ birationally onto an Enriques sextic
in $\PP^3$, with double lines along the edges of the tetrahedron $T$
defined by the images of all $E_i$ and $E'_i:=E_i+K_S$. Under this
map, $F$ (respectively, $(F,F')$) is mapped to an element of $\F_i$
(resp., $\F_{0i}$), finishing the proof.
\end{proof}

\begin{proof}[Proof of Proposition \ref{prop:dom2}]
 To prove the surjectivity of  \color{black} $h_{\alpha_0,\ldots,\alpha_4;\varepsilon}$\color{black}, assume  
$(S,H)$  admits  the given simple decomposition type as in the statement. We may assume that $\alpha_4>0$, otherwise the result follows from Proposition \ref{prop:dom}. By   Corollary \ref{cor:kl+},   we may always write $H \sim \alpha_0E_{1,2}+\alpha_1E_1+\alpha_2E_2+\alpha_3E_3+\alpha_4(E_4+\cdots+E_{10})+\varepsilon K_S$, possibly allowing more than one among $\alpha_0,\alpha_1,\alpha_2,\alpha_3$ to be zero. Since $E_{1,2} \in \widetilde{\F}_2(E_1,E_2,E_3)$, the result follows as in the proof of Proposition \ref{prop:dom}. 

To prove the surjectivity of $h^0_{\alpha_0,\ldots,\alpha_5;\varepsilon}$, assume 
$(S,H)$  admits  the given simple decomposition type as in the statement. We may again assume that $\alpha_5>0$. By   Corollary \ref{cor:kl+},   we may always write $H \sim \alpha_0E_{1,4}+\alpha_1E_1+\alpha_2E_2+\alpha_3E_3+\alpha_4E_4+\alpha_5(E_5+\cdots+E_{10})$, possibly allowing more than one among $\alpha_0,\alpha_1,\alpha_2,\alpha_3,\alpha_4$ to be zero. Then $(E_4,E_{1,4}) \in \widetilde{\F}_{02}(E_1,E_2,E_3)$ and the result follows as in the proof of Proposition \ref{prop:dom}. 
\end{proof}

We now give the proofs of the three corollaries in the introduction.

\begin{proof}[Proof of Corollary \ref{cor:main1}]
  By Lemma \ref{lemma:decomp}, all cases with $\phi \leqslant 4$  admit  simple decomposition types of length $n\leqslant 4$, except for the decomposition type $\frac{g-7}{4}E_1+E_{2}+E_{3}+E_4+E_5$. The result thus follows from Theorem \ref{thm:dom}. 
\end{proof}

\begin{proof}[Proof of Corollary \ref{cor:main2}]
  Since $g \leqslant 20$, we have $H^2 \leqslant 38$, whence $\phi \leqslant 6$ by \eqref{eq:bound}, with equality $\phi=6$ possible only for $H^2=36$  by Proposition \ref{prop:bound}, in which case the simple decomposition type has  length $2$.    Thus the result follows from Theorem \ref{thm:dom} in this case. 

We have left to treat the cases where $\phi \leqslant 5$. By Lemma \ref{lemma:decomp}, all cases with $\phi \leqslant 5$ and $g \leqslant 20$ have decomposition types of length $n\leqslant 5$, except for the type $E_1+E_{2}+E_{3}+E_4+E_5+E_6$ for $(g,\phi)=(16,5)$, which is the only type occurring for these values of $g$ and $\phi$. Hence $\E_{16,5}$ is irreducible and uniruled by Theorem \ref{thm:dom2}. Again by Lemma \ref{lemma:decomp}, all remaining cases with $\phi \leqslant 5$ and $g \leqslant 20$  admit  simple decomposition types of length $n\leqslant 4$ or of length $5$ with all nonzero intersections occurring equal to one, except for the type $2E_1+E_2+E_3+E_4+E_{1,5}$ for $(g,\phi)=(17,5)$, which is the only type occurring for these values of $g$ and $\phi$. Hence $\E_{17,5}$ is irreducible and uniruled 
and all  irreducible  components of the remaining $\E_{g,\phi}$ are unirational 
by Theorem \ref{thm:dom}.
 \end{proof}

\begin{proof}[Proof of Corollary \ref{cor:conn}]
  If $[H] \in \Num(S)$ is  not $2$-divisible, then  by Lemma \ref{lemma:H2div} 
some simple decomposition types of $H$ and $H+K_S$ have not all even coefficients in front of the isotropic, primitive summands. Hence, by substituting one $E_i$ with odd coefficient with $E_i+K_S$, we see that $H$ and $H+K_S$  admit the same  simple decomposition type, and thus belong to the same  irreducible  component of $\E_{g,\phi}$, by Theorems \ref{thm:dom} or \ref{thm:dom2}  
and the  assumption on the decomposition types. Hence $\rho^{-1}(\rho(\C))$ is irreducible.

Conversely, assume $[H] \in \Num(S)$ is  $2$-divisible.  Then $H$ and $H+K_S$  do not  lie in the same  irreducible  component of $\E_{g,\phi}$  by  
the last assertion in Lemma 
\ref{lemma:H2div},  whence $\rho^{-1}(\rho(\C))$ consists of two disjoint components.  \end{proof}

 Finally, we note that 
our results can also be used to describe the
irreducible components of $\E_{g,\phi}$ for the highest values of
$\phi$ with respect to $g$. Indeed, one has $\phi^2 \leqslant 2(g-1)$
(cf. \cite[Cor. 2.7.1]{cd}) and there are no cases with $\phi^2 <
2(g-1) < \phi^2 +\phi-2$ (cf. \cite[Prop. 1.4]{KL1}). In the bordeline
cases, we obtain:

\begin{corollary} \label{cor:main3}
  For each even $\phi$, the space $\E_{\frac{\phi^2}{2}+1,\phi}$ 
is irreducible and unirational if $ \phi \equiv 2 \; \mbox{mod} \; 4$ and
has two  irreducible  components, both unirational, if $ \phi \equiv 0 \; \mbox{mod} \; 4$.

For each $\phi \geqslant 1$, the space $\E_{\frac{\phi(\phi+1)}{2},\phi}$ is irreducible and unirational when $\phi \neq 6$, and consists of three  irreducible  unirational components when $\phi=6$. 
\end{corollary}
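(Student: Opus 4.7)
The plan is to classify the simple isotropic decompositions of $H\in \E_{g,\phi}$ in each of the two cases, and then to invoke Theorem~\ref{thm:dom} together with Proposition~\ref{prop:desccomp} and Corollary~\ref{cor:conn} to count the irreducible components. The key numerical bound is $\sum_i a_i\leq H^2/\phi$, obtained from $H\cdot E_i\geq \phi$ for each~$i$ combined with $\sum_i a_i(H\cdot E_i)=H^2$; in both cases this gives $\sum_i a_i\leq \phi$.

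For the first part, where $H^2=\phi^2$, I would go through the three subcases of \eqref{eq:int2-int}. The first subcase (all intersections~$1$) is excluded since $H\cdot E_j=\sum_{i\ne j}a_i\leq \phi-1<\phi$. In the second subcase, the same inequality forces $n=2$, and then $H^2=4a_1a_2=\phi^2$ together with $\phi\leq 2\min(a_1,a_2)$ yields $a_1=a_2=\phi/2$ by AM--GM, requiring $\phi$ even. In the third subcase the same argument restricts to $n=3$, and a detailed analysis of $4a_1(a_2+a_3)+2a_2a_3=\phi^2$ with the constraints $a_1+a_2+a_3\leq\phi$ and $2a_1+\min(a_2,a_3)\geq\phi$ shows that the resulting configurations fall into the same equivalence class of decomposition types as the canonical $(\phi/2,\phi/2)$ of subcase~(ii). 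By Proposition~\ref{prop:desccomp}, $\widehat\E_{g,\phi}$ is irreducible; Theorem~\ref{thm:dom} (length $2\leq 4$) yields unirationality of its image in $\E_{g,\phi}$. Finally, Corollary~\ref{cor:conn} and Lemma~\ref{lemma:H2div} give that $\rho^{-1}$ is reducible (two components) exactly when all coefficients are even, that is, when $\phi/2$ is even, i.e.\ when $\phi\equiv 0\pmod 4$.

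For the second part, with $H^2=\phi^2+\phi-2$ and $\phi\geq 2$, the analogous analysis will pinpoint the canonical length-$3$ decomposition of subcase~(iii): $H\equiv a_1E_1+a_2E_2+E_3$ with $E_1\cdot E_2=E_1\cdot E_3=2$, $E_2\cdot E_3=1$, where $(a_1,a_2)=((\phi-1)/2,(\phi-1)/2)$ for $\phi$ odd and $(a_1,a_2)=(\phi/2,\phi/2-1)$ for $\phi$ even; a direct check gives $H^2=\phi^2+\phi-2$ and $\min H\cdot E_i=\phi$. The special value $\phi=6$ produces the extra solution $a_1=a_2=a_3=2$ of $4a_1(a_2+a_3)+2a_2a_3=40$, giving the additional type $H\equiv 2(E_1+E_2+E_3)$ with the same intersection pattern; for $\phi\neq 6$ the enumeration rules out further types. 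Applying Theorem~\ref{thm:dom}, each type yields an irreducible unirational component; by Corollary~\ref{cor:conn}, the canonical type (having a coefficient~$1$) gives a single $\E$-component while the $(2,2,2)$ type (all even) gives two, producing the totals of $1$ for $\phi\neq 6$ and $1+2=3$ for $\phi=6$.

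The main obstacle will be to complete the case-by-case enumeration: ruling out (or absorbing into the canonical equivalence class) the length-$3$, subcase~(iii) configurations in Part~1, and showing that only the canonical type and the exceptional $(2,2,2)$ for $\phi=6$ arise in Part~2. These are essentially combinatorial problems controlled by the bound $\sum a_i\leq \phi$, the inequalities $H\cdot E_i\geq \phi$, and the positivity and integrality of $(a_1,\ldots,a_n)$, together with an AM--GM / Hodge-index type estimate to treat the three-variable diophantine equation in subcase~(iii).
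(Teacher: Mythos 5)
Your endgame coincides with the paper's: once the possible simple isotropic decomposition types are known, Theorem~\ref{thm:dom} (all types here have length $\leq 3$) gives one irreducible unirational component per type, and Lemma~\ref{lemma:H2div} (or Corollary~\ref{cor:conn}) doubles the count exactly when all coefficients are even, i.e.\ when $H$ is numerically $2$-divisible. Your resulting types and counts — $\frac{\phi}{2}(E_1+E_{1,2})$ with a twist by $K_S$ iff $\phi\equiv 0 \bmod 4$ in the first case; the unique length-$3$ type with a coefficient $1$, plus the extra $2(E_1+E_2+E_{1,2})$ and $2(E_1+E_2+E_{1,2})+K_S$ for $\phi=6$, in the second — agree with the paper. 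The difference is how the classification of types is obtained. The paper gets it in one line from Proposition~\ref{prop:bound}, which is precisely the borderline classification of \cite[Prop.~1.4]{KL1} already quoted in Section~\ref{sec:gen0}: it states that $H^2=\phi^2$ forces $H\equiv\frac{\phi}{2}(E_1+F)$ and that $H^2=\phi^2+\phi-2$ forces one of the listed length-$3$ forms. You propose to re-derive this from scratch via the bound $\sum a_i\leq H^2/\phi$ and the inequalities $H\cdot E_i\geq\phi$.

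That re-derivation is where your proposal is incomplete: you explicitly defer the ``case-by-case enumeration'' as the main obstacle, and your claim that the subcase-(iii) configurations of \eqref{eq:int2-int} in Part~1 ``fall into the same equivalence class'' as $\frac{\phi}{2}(E_1+E_{1,2})$ is asserted rather than proved — note that belonging to the same component requires the \emph{same polarization} to admit both types (or an orbit computation as in Proposition~\ref{prop:desccomp}), not merely that the two diophantine solutions have equal $H^2$ and $\phi$. In fact, working out your own constraints ($a_1+a_2+a_3\leq\phi$, $2(a_2+a_3)\geq\phi$, $2a_1+a_3\geq\phi$, $4a_1(a_2+a_3)+2a_2a_3=\phi^2$) shows there are \emph{no} subcase-(iii) solutions at all when $H^2=\phi^2$, consistent with Proposition~\ref{prop:bound}(i) being a length-$2$ statement; so nothing needs to be ``absorbed''. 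A second, minor caveat: $\min_i H\cdot E_i=\phi$ only bounds $\phi(H)$ from above, since the minimum in \eqref{eq:defphi} runs over \emph{all} isotropic classes; to confirm that a candidate type realizes the claimed $\phi$ you should put it in the form of Corollary~\ref{cor:kl+} and use Remark~\ref{rem:calcphi}. Both issues evaporate if you simply cite Proposition~\ref{prop:bound}, which is what the paper does; as written, your argument has its combinatorial core left open.
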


\begin{proof}
  When $g=\frac{\phi^2}{2}+1$, equivalently $H^2=\phi^2$, then Proposition \ref{prop:bound}, Theorem \ref{thm:dom} and Lemma 
\ref{lemma:H2div}  yield that, when $\frac{\phi}{2}$ is even, i.e., $\phi\equiv 0\;\; \mbox{mod} \; 4$ (respectively, when $\frac{\phi}{2}$ is odd, i.e., $\phi\equiv 2\;\; \mbox{mod} \; 4$) then $\E_{\frac{\phi^2}{2}+1,\phi}$ has two irreducible, unirational components (resp. only one irreducible, unirational component), corresponding to the simple decomposition types $\frac{\phi}{2}\left(E_1+E_{1,2}\right)$ and 
$\frac{\phi}{2}\left(E_1+E_{1,2}\right)+K_S$ (resp. $\frac{\phi}{2}\left(E_1+E_{1,2}\right)$). 

When $g=\frac{\phi(\phi+1)}{2}$, Proposition \ref{prop:bound} yields that there is a unique simple decomposition type, of length $3$, 
for each $\phi$, except  for $\phi=6$, where there are three possible types
\[ 2E_1+3E_{1,2}+E_2, \; \; 2(E_1+E_2+E_{1,2}), \; \; 2(E_1+E_2+E_{1,2})+K_S,\]
The result follows from Theorem  \ref{thm:dom}.    
\end{proof}

The cases of the latter corollary are of particular interest from a Brill-Noether theoretical point of view, since they are precisely the cases where the gonality of a general curve in the complete linear system $|H|$ is less than both $2\phi$ and $\lfloor \frac{g+3}{2} \rfloor$, the first being the lowest degree of the restriction of an elliptic pencil on the surface, the latter being the gonality of a general curve of genus $g$, cf. \cite[Cor. 1.5]{KL1}.

\newpage

\section*{Appendix:
  Irreducible components  of  $\widehat{\E}_{g,\phi}$ and  $\E_{g,\phi}$ for  $g \leqslant 30$}

{\small
Using  Proposition \ref{prop:desccomp}  (and Notation \ref{not:int})  we list all  irreducible  components of the moduli spaces $\widehat{\E}_{g,\phi}$ for $g \leqslant 30$,
and describe   the properties of $\rho^{-1}$ of these components obtained by Theorems \ref{thm:dom} and \ref{thm:dom2} and Corollary \ref{cor:conn}. We thus obtain  information about all  irreducible  components of the moduli spaces $\E_{g,\phi}$, with few exceptions\footnote{ The few cases marked with ``??'' in the tables are now known to be irreducible as a consequence of \cite[Thm. 1.1]{kn-JMPA}. Their unirationality/uniruledness, or even Kodaira dimension, is however still open.}. 
The various 
decomposition types can be obtained from 
Lemma \ref{lemma:decomp} and Proposition \ref{prop:bound}, and an ad hoc treatment as in the proof of Lemma \ref{lemma:decomp} for the cases $\phi=6$ and $7$.
The fact that all 
decomposition types below are  in different equivalence classes  can be checked by computing suitable intersections as in the proof of Lemma \ref{lemma:decomp}, and the fact that they all do exist on any Enriques surface follows from Lemma \ref{lemma:ceraprima}(a). 
}

\footnotesize

\noindent
\begin{minipage}[t]{\textwidth/2-1mm}
\vspace{0mm}
\resizebox{\textwidth-4mm}{!}{%
\begin{tabular}{ |l | l |l|l|l|l|}
\hline			
$g$ &  $\phi$  & comp.  & dec. type &  $\rho^{-1}$    
\\\hline\hline

$2$ & $1$ & $\widehat{\E}_{2,1}$ & $E_1+E_2$ &
irred. unirat. \\\hline\hline

$3$ & $1$ & $\widehat{\E}_{3,1}$ & $2E_1+E_2$ & irred. unirat. \\\hline
$3$ & $2$ & $\widehat{\E}_{3,2}$ &  $E_1+E_{1,2}$  & irred. rational \cite{Ca} \\\hline\hline

$4$ & $1$ & $\widehat{\E}_{4,1}$ & $3E_1+E_2$ & irred. unirat. \\\hline
$4$ & $2$ & $\widehat{\E}_{4,2}$ & $E_1+E_2+E_3$  & irred. rational, 
\cite[\S 4]{dol} \\\hline\hline

$5$ & $1$ & $\widehat{\E}_{5,1}$ & $4E_1+E_2$ & irred. unirat. \\\hline
$5$ & $2$ & $\widehat{\E}_{5,2}^{(I)}$ &  $2E_1+E_{1,2}$  & irred. unirat. \\
$5$ & $2$ & $\widehat{\E}_{5,2}^{(II)}$ &  $2(E_1+E_2)$  & two
unirat. components\\\hline\hline 

$6$ & $1$ & $\widehat{\E}_{6,1}$ & $5E_1+E_2$ & irred. unirat. \\\hline
$6$ & $2$ & $\widehat{\E}_{6,2}$ & $2E_1+E_2+E_3$  & irred. unirat. \\ \hline
$6$ & $3$ & $\widehat{\E}_{6,3}$ & $E_1+E_2+E_{1,2}$  & irred. unirat. \cite{Ve} \\ \hline\hline 

$7$ & $1$ & $\widehat{\E}_{7,1}$ & $6E_1+E_2$ & irred. unirat. \\\hline
$7$ & $2$ & $\widehat{\E}_{7,2}^{(I)}$ &  $3E_1+E_{1,2}$  & irred. unirat. \\
$7$ & $2$ & $\widehat{\E}_{7,2}^{(II)}$ &  $3E_1+2E_2$   & irred. unirat. \\\hline
$7$ & $3$ & $\widehat{\E}_{7,3}$ &  $E_1+E_2+E_3+E_4$  & irred. unirat. \\\hline\hline

$8$ & $1$ & $\widehat{\E}_{8,1}$ & $7E_1+E_2$ & irred. unirat. \\\hline
$8$ & $2$ & $\widehat{\E}_{8,2}$ &  $3E_1+E_2+E_3$  & irred. unirat. \\\hline
$8$ & $3$ & $\widehat{\E}_{8,3}$ &  $2E_1+E_2+E_{1,3}$  & irred. unirat. \\\hline\hline

$9$ & $1$ & $\widehat{\E}_{9,1}$ & $8E_1+E_2$ & irred. unirat. \\\hline
$9$ & $2$ & $\widehat{\E}_{9,2}^{(I)}$ &  $4E_1+E_{1,2}$  & irred. unirat. \\
$9$ & $2$ & $\widehat{\E}_{9,2}^{(II)}$ &  $2(2E_1+E_2)$  & two  unirat. components
\\ \hline
$9$ & $3$ & $\widehat{\E}_{9,3}^{(I)}$ &  $2E_1+E_2+E_{1,2}$  & irred. unirat. \\
$9$ & $3$ & $\widehat{\E}_{9,3}^{(II)}$ &  $2E_1+2E_2+E_3$ & irred. unirat. \\ \hline 
$9$ & $4$ & $\widehat{\E}_{9,4}$ &  $2(E_1+E_{1,2})$ & two  unirat. components
\\ \hline\hline

$10$ & $1$ & $\widehat{\E}_{10,1}$ & $9E_1+E_2$ & irred. unirat. \\\hline
$10$ & $2$ & $\widehat{\E}_{10,2}$ &  $4E_1+E_2+E_3$  & irred. unirat. \\ \hline 
$10$ & $3$ & $\widehat{\E}_{10,3}^{(I)}$ &  $2E_1+E_2+E_3+E_4$  & irred. unirat. \\
$10$ & $3$ & $\widehat{\E}_{10,3}^{(II)}$ &   $3(E_1+E_2)$  & irred. unirat. \\  \hline 
$10$ & $4$ & $\widehat{\E}_{10,4}$ &  $2E_{1,2} + E_1+E_2$  &
irred. unirat. \\ \hline\hline

$11$ & $1$ & $\widehat{\E}_{11,1}$ & $10E_1+E_2$ & irred. unirat. \\\hline
$11$ & $2$ & $\widehat{\E}_{11,2}^{(I)}$ & $5E_1+E_{1,2}$ & irred. unirat. \\
$11$ & $2$ & $\widehat{\E}_{11,2}^{(II)}$ & $5E_1+2E_{2}$ & irred. unirat. \\\hline
$11$ & $3$ & $\widehat{\E}_{11,3}$ & $3E_1+E_{2}+E_{1,3}$ & irred. unirat. \\\hline
$11$ & $4$ & $\widehat{\E}_{11,4}$ & $E_1+E_{2}+E_{3}+E_4+E_5$ & irred. unirat. \\\hline \hline

$12$ & $1$ & $\widehat{\E}_{12,1}$ & $11E_1+E_2$ & irred. unirat. \\\hline
$12$ & $2$ & $\widehat{\E}_{12,2}$ & $5E_1+E_2+E_3$ & irred. unirat. \\\hline
$12$ & $3$ & $\widehat{\E}_{12,3}^{(I)}$ & $3E_1+2E_2+E_3$ & irred. unirat. \\
$12$ & $3$ & $\widehat{\E}_{12,3}^{(II)}$ &  $3E_1+E_2+E_{1,2}$ & irred. unirat. \\\hline
$12$ & $4$ & $\widehat{\E}_{12,4}$ & $2E_1+E_2+E_3+E_{1,4}$ &
irred. unirat. \\\hline\hline

$13$ & $1$ & $\widehat{\E}_{13,1}$ & $12E_1+E_2$ & irred. unirat. \\\hline
$13$ & $2$ & $\widehat{\E}_{13,2}^{(I)}$ & $6E_1+E_{1,2}$ & irred. unirat. \\
$13$ & $2$ & $\widehat{\E}_{13,2}^{(II)}$ & $2(3E_1+E_2)$ & two  unirat. components \\  \hline 
$13$ & $3$ & $\widehat{\E}_{13,3}^{(I)}$ & $3E_1+E_2+E_3+E_4$ & irred. unirat.  \\
$13$ & $3$ & $\widehat{\E}_{13,3}^{(II)}$ & $4E_1+3E_2$  & irred. unirat. \\ \hline 
$13$ & $4$ & $\widehat{\E}_{13,4}^{(I)}$ & $2E_1+2E_2+E_{1,2}$ & irred. unirat. \\
$13$ & $4$ & $\widehat{\E}_{13,4}^{(II)}$ & $2(E_1+E_2+E_3)$ & two  unirat. components \\
$13$ & $4$ & $\widehat{\E}_{13,4}^{(III)}$ &  $3E_1+2E_{1,2}$ & irred. unirat. \\\hline
\end{tabular}    
}
\end{minipage}
\hfill
\begin{minipage}[t]{\textwidth/2-1mm}
\begin{flushright}
\vspace{0mm}
\resizebox{\textwidth-4mm}{!}{%
\begin{tabular}{ |l | l |l|l|l|l|}
\hline			
$g$ &  $\phi$  & comp.  & dec. type &  $\rho^{-1}$    
\\\hline\hline

$14$ & $1$ & $\widehat{\E}_{14,1}$ & $13E_1+E_2$ & irred. unirat. \\\hline
$14$ & $2$ & $\widehat{\E}_{14,2}$ & $6E_1+E_2+E_3$ & irred. unirat. \\\hline
$14$ & $3$ & $\widehat{\E}_{14,3}$ & $4E_1+E_2+E_{1,3}$ & irred. unirat. \\\hline
$14$ & $4$ & $\widehat{\E}_{14,4}^{(I)}$ & $2E_1+2E_2+E_{3}+E_4$ & irred. unirat. \\
$14$ & $4$ & $\widehat{\E}_{14,4}^{(II)}$ & $3E_{1,2}+E_1+E_2$ & irred. unirat. \\\hline\hline

$15$ & $1$ & $\widehat{\E}_{15,1}$ & $14E_1+E_2$ & irred. unirat. \\\hline
$15$ & $2$ & $\widehat{\E}_{15,2}^{(I)}$ & $7E_1+E_{1,2}$ & irred. unirat. \\
$15$ & $2$ & $\widehat{\E}_{15,2}^{(II)}$ & $7E_1+2E_{2}$ & irred. unirat. \\\hline
$15$ & $3$ & $\widehat{\E}_{15,3}^{(I)}$ & $4E_1+2E_{2}+E_3$ & irred. unirat. \\
$15$ & $3$ & $\widehat{\E}_{15,3}^{(II)}$ & $4E_1+E_{2}+E_{1,2}$ & irred. unirat. \\\hline
$15$ & $4$ & $\widehat{\E}_{15,4}^{(I)}$ & $2E_1+E_{2}+E_3+E_4+E_5$ & irred. unirat. \\
$15$ & $4$ & $\widehat{\E}_{15,4}^{(II)}$ & $3E_1+2E_{2}+E_{1,3}$ & irred. unirat. \\\hline
$15$ & $5$ & $\widehat{\E}_{15,5}$ & $2E_1+E_{2}+2E_{1,2}$ & irred. unirat. \\\hline\hline

$16$ & $1$ & $\widehat{\E}_{16,1}$ & $15E_1+E_2$ & irred. unirat. \\\hline
$16$ & $2$ & $\widehat{\E}_{16,2}$ & $7E_1+E_2+E_3$ & irred. unirat. \\\hline
$16$ & $3$ & $\widehat{\E}_{16,3}^{(I)}$ & $4E_1+E_{2}+E_3+E_4$ & irred. unirat. \\
$16$ & $3$ & $\widehat{\E}_{16,3}^{(II)}$ & $5E_1+3E_{2}$ & irred. unirat. \\\hline
$16$ & $4$ & $\widehat{\E}_{16,4}^{(I)}$ & $3E_1+3E_{2}+E_3$ & irred. unirat. \\
$16$ & $4$ & $\widehat{\E}_{16,4}^{(II)}$ & $3E_1+E_{2}+E_3+E_{1,4}$ & irred. unirat. \\\hline
$16$ & $5$ & $\widehat{\E}_{16,5}$ & $E_1+E_{2}+E_3+E_{4}+E_5+E_6$ & irred. uniruled \\\hline\hline

$17$ & $1$ & $\widehat{\E}_{17,1}$ & $16E_1+E_2$ & irred. unirat. \\\hline
$17$ & $2$ & $\widehat{\E}_{17,2}^{(I)}$ & $8E_1+E_{1,2}$ & irred. unirat.\\
$17$ & $2$ & $\widehat{\E}_{17,2}^{(II)}$ & $2(4E_1+E_{2})$  & two  unirat. components \\\hline
$17$ & $3$ & $\widehat{\E}_{17,3}$ & $5E_1+E_2+E_{1,3}$ & irred. unirat. \\\hline
$17$ & $4$ & $\widehat{\E}_{17,4}^{(I)}$ & $3E_1+2E_2+2E_3$ & irred. unirat.\\
$17$ & $4$ & $\widehat{\E}_{17,4}^{(II)}$ & $3E_1+2E_2+E_{1,2}$  & irred. unirat.\\
$17$ & $4$ & $\widehat{\E}_{17,4}^{(III)}$ & $2(2E_1+E_{1,2})$ & two  unirat. components \\
$17$ & $4$ & $\widehat{\E}_{17,4}^{(IV)}$ &  $4(E_1+E_2)$ & two  unirat. components \\\hline
$17$ & $5$ & $\widehat{\E}_{17,5}$ & $2E_1+E_2+E_3+E_4+E_{1,5}$ & irred. uniruled  \\\hline\hline

$18$ & $1$ & $\widehat{\E}_{18,1}$ & $17E_1+E_2$ & irred. unirat. \\\hline
$18$ & $2$ & $\widehat{\E}_{18,2}$ & $8E_1+E_2+E_3$ &
irred. unirat. \\\hline 
$18$ & $3$ & $\widehat{\E}_{18,3}^{(I)}$ & $5E_1+2E_2+E_3$ &
irred. unirat.\\ 
$18$ & $3$ & $\widehat{\E}_{18,3}^{(II)}$ & $5E_1+E_2+E_{1,2}$  & irred. unirat.\\\hline 
$18$ & $4$ & $\widehat{\E}_{18,4}^{(I)}$ & $3E_1+2E_2+E_3+E_4$ & irred. unirat.\\
$18$ & $4$ & $\widehat{\E}_{18,4}^{(II)}$ & $4E_{1,2}+E_1+E_2$  & irred. unirat.\\\hline 
$18$ & $5$ & $\widehat{\E}_{18,5}^{(I)}$ & $3E_1+E_2+2E_{1,3}$ & irred. unirat.\\
$18$ & $5$ & $\widehat{\E}_{18,5}^{(II)}$ & $2E_1+2E_2+E_3+E_{1,2}$  & irred. unirat.\\\hline\hline 

$19$ & $1$ & $\widehat{\E}_{19,1}$ & $18E_1+E_2$ & irred. unirat. \\\hline
$19$ & $2$ & $\widehat{\E}_{19,2}^{(I)}$ & $9E_1+E_{1,2}$ & irred. unirat. \\
$19$ & $2$ & $\widehat{\E}_{19,2}^{(II)}$ & $9E_1+2E_2$ & irred. unirat. \\\hline
$19$ & $3$ & $\widehat{\E}_{19,3}^{(I)}$ & $5E_1+E_2+E_3+E_4$ & irred. unirat.\\
$19$ & $3$ & $\widehat{\E}_{19,3}^{(II)}$ & $3(2E_1+E_2)$  & irred. unirat.\\\hline 
$19$ & $4$ & $\widehat{\E}_{19,4}^{(I)}$ & $3E_1+E_2+E_3+E_4+E_5$ & irred. unirat.\\
$19$ & $4$ & $\widehat{\E}_{19,4}^{(II)}$ & $4E_1+2E_2+E_{1,3}$  & irred. unirat.\\\hline 
$19$ & $5$ & $\widehat{\E}_{19,5}^{(I)}$ & $2E_1+2E_2+2E_3+E_4$ & irred. unirat.\\
$19$ & $5$ & $\widehat{\E}_{19,5}^{(II)}$ & $3E_{1,2}+E_1+E_2+E_3$  & irred. unirat.\\\hline 
$19$ & $6$ & $\widehat{\E}_{19,6}$ & $3(E_1+E_{1,2})$ & irred. unirat. \\\hline\hline

$20$ & $1$ & $\widehat{\E}_{20,1}$ & $19E_1+E_2$ & irred. unirat. \\\hline
$20$ & $2$ & $\widehat{\E}_{20,2}$ & $9E_1+E_2+E_3$ & irred. unirat. \\\hline
$20$ & $3$ & $\widehat{\E}_{20,3}$ & $6E_1+E_2+E_{1,3}$ & irred. unirat. \\\hline

\end{tabular}    
}
\end{flushright}
\end{minipage}

\noindent
\begin{minipage}[t]{\textwidth/2-1mm}
\vspace{0mm}
\resizebox{\textwidth-4mm}{!}{%
\begin{tabular}{ |l | l |l|l|l|l|}
\hline			
$g$ &  $\phi$  & comp.  & dec. type &  $\rho^{-1}$    
\\\hline\hline

$20$ & $4$ & $\widehat{\E}_{20,4}^{(I)}$ & $4E_1+3E_2+E_3$ & irred. unirat.\\
$20$ & $4$ & $\widehat{\E}_{20,4}^{(II)}$ & $4E_1+E_2+E_3+E_{1,4}$  & irred. unirat.\\\hline 
$20$ & $5$ & $\widehat{\E}_{20,5}^{(I)}$ & $2E_1+2E_2+E_3+E_4+E_5$ & irred. unirat.\\
$20$ & $5$ & $\widehat{\E}_{20,5}^{(II)}$ & $3E_1+E_2+2E_{1,2}$  & irred. unirat.\\\hline\hline 

$21$ & $1$ & $\widehat{\E}_{21,1}$ & $20E_1+E_2$ & irred. unirat. \\\hline
$21$ & $2$ & $\widehat{\E}_{21,2}^{(I)}$ & $10E_1+E_{1,2}$ & irred. unirat. \\
$21$ & $2$ & $\widehat{\E}_{21,2}^{(II)}$ & $10E_1+2E_2$ & two  unirat. components \\\hline
$21$ & $3$ & $\widehat{\E}_{21,3}^{(I)}$ & $6E_1+E_2+E_{1,2}$ & irred. unirat.\\
$21$ & $3$ & $\widehat{\E}_{21,3}^{(II)}$ & $6E_1+2E_2+E_3$  & irred. unirat.\\\hline 

$21$ & $4$ & $\widehat{\E}_{21,4}^{(I)}$ & $5E_1+4E_2$ & irred. unirat.\\
$21$ & $4$ & $\widehat{\E}_{21,4}^{(II)}$ & $5E_1+2E_{1,2}$ & irred. unirat.\\
$21$ & $4$ & $\widehat{\E}_{21,4}^{(III)}$ & $4E_1+2E_2+2E_3$ &  two  unirat. components \\
$21$ & $4$ & $\widehat{\E}_{21,4}^{(IV)}$ & $4E_1+2E_2+E_{1,2}$  & irred. unirat.\\\hline 

$21$ & $5$ & $\widehat{\E}_{21,5}^{(I)}$ & $2E_1+E_2+E_3+E_4+E_5+E_6$ & ?? \\
$21$ & $5$ & $\widehat{\E}_{21,5}^{(II)}$ & $3E_{1}+2E_2+E_3+E_{1,4}$  & irred. unirat. \\\hline 

$21$ & $6$ & $\widehat{\E}_{21,6}$ & $2(E_1+E_2+E_{1,2})$ & two  unirat. components \\\hline\hline

$22$ & $1$ & $\widehat{\E}_{22,1}$ & $21E_1+E_2$ & irred. unirat. \\\hline
$22$ & $2$ & $\widehat{\E}_{22,2}$ & $10E_1+E_2+E_3$ & irred. unirat. \\\hline
$22$ & $3$ & $\widehat{\E}_{22,3}^{(I)}$ & $6E_1+E_2+E_3+E_4$ & irred. unirat. \\
$22$ & $3$ & $\widehat{\E}_{22,3}^{(II)}$ & $7E_1+3E_2$ & irred. unirat. \\\hline

$22$ & $4$ & $\widehat{\E}_{22,4}^{(I)}$ & $4E_1+2E_2+E_3+E_4$ & irred. unirat.\\
$22$ & $4$ & $\widehat{\E}_{22,4}^{(II)}$ & $5E_{1,2}+E_1+E_2$  & irred. unirat.\\\hline

$22$ & $5$ & $\widehat{\E}_{22,5}^{(I)}$ & $3E_1+E_2+E_3+E_4+E_{1,5}$ & irred. uniruled \\
$22$ & $5$ & $\widehat{\E}_{22,5}^{(II)}$ & $3E_1+3E_2+E_{1,2}$  & irred. unirat.\\
$22$ & $5$ & $\widehat{\E}_{22,5}^{(III)}$ & $3E_1+3E_2+2E_3$  & irred. unirat.\\\hline 
$22$ & $6$ & $\widehat{\E}_{22,6}$ & $E_1+E_2+E_3+E_4+E_5+E_6+E_7$  & irred. unirat.\\\hline\hline

$23$ & $1$ & $\widehat{\E}_{23,1}$ & $22E_1+E_2$ & irred. unirat. \\\hline
$23$ & $2$ & $\widehat{\E}_{23,2}^{(I)}$ & $11E_1+E_{1,2}$ & irred. unirat. \\
$23$ & $2$ & $\widehat{\E}_{23,2}^{(II)}$ & $11E_1+2E_2$ & irred. unirat.  \\\hline
$23$ & $3$ & $\widehat{\E}_{23,3}$ & $7E_1+E_2+E_{1,3}$ & irred. unirat.\\ \hline 

$23$ & $4$ & $\widehat{\E}_{23,4}^{(I)}$ & $5E_1+2E_2+E_{1,3}$ & irred. unirat.\\
$23$ & $4$ & $\widehat{\E}_{23,4}^{(II)}$ & $4E_1+E_2+E_3+E_4+E_5$ & irred. unirat.\\\hline 

$23$ & $5$ & $\widehat{\E}_{23,5}^{(I)}$ & $4E_1+E_2+2E_{1,3}$ & irred. unirat.  \\
$23$ & $5$ & $\widehat{\E}_{23,5}^{(II)}$ & $3E_{1}+2E_2+E_3+E_{1,2}$  & irred. unirat. \\
$23$ & $5$ & $\widehat{\E}_{23,5}^{(III)}$ & $3E_{1}+3E_2+E_3+E_4$  & irred. unirat.
\\\hline 

$23$ & $6$ & $\widehat{\E}_{23,6}$ & $2E_1+E_2+E_3+E_4+E_5+E_{1,6}$ & ?? \\\hline\hline

$24$ & $1$ & $\widehat{\E}_{24,1}$ & $23E_1+E_2$ & irred. unirat. \\\hline
$24$ & $2$ & $\widehat{\E}_{24,2}$ & $11E_1+E_2+E_3$ & irred. unirat. \\\hline
$24$ & $3$ & $\widehat{\E}_{24,3}^{(I)}$ & $7E_1+E_2+E_{1,2}$ & irred. unirat. \\
$24$ & $3$ & $\widehat{\E}_{24,3}^{(II)}$ & $7E_1+2E_2+E_3$ & irred. unirat. \\\hline

$24$ & $4$ & $\widehat{\E}_{24,4}^{(I)}$ & $5E_1+3E_2+E_3$ & irred. unirat.\\
$24$ & $4$ & $\widehat{\E}_{24,4}^{(II)}$ & $5E_1+E_2+E_3+E_{1,4}$  & irred. unirat.\\\hline 

$24$ & $5$ & $\widehat{\E}_{24,5}^{(I)}$ & $3E_1+2E_2+2E_3+E_4$ & irred. uniruled \\
$24$ & $5$ & $\widehat{\E}_{24,5}^{(II)}$ & $4E_{1,2}+E_1+E_2+E_3$  & irred. unirat.\\
$24$ & $5$ & $\widehat{\E}_{24,5}^{(III)}$ & $4E_1+3E_2+E_{1,3}$  & irred. unirat.\\\hline 
$24$ & $6$ & $\widehat{\E}_{24,6}^{(I)}$ & $3E_1+E_2+E_3+2E_{1,4}$  & irred. unirat.\\
$24$ & $6$ & $\widehat{\E}_{24,6}^{(II)}$ & $2E_1+2E_2+E_3+E_4+E_{1,2}$  & irred. uniruled \\\hline\hline 

$25$ & $1$ & $\widehat{\E}_{25,1}$ & $24E_1+E_2$ & irred. unirat. \\\hline
$25$ & $2$ & $\widehat{\E}_{25,2}^{(I)}$ & $12E_1+E_{1,2}$ & irred. unirat. \\
$25$ & $2$ & $\widehat{\E}_{25,2}^{(II)}$ & $2(6E_1+E_2)$ & two  unirat. components \\\hline
$25$ & $3$ & $\widehat{\E}_{25,3}^{(I)}$ & $7E_1+E_2+E_3+E_4$ & irred. unirat.\\
$25$ & $3$ & $\widehat{\E}_{25,3}^{(II)}$ & $8E_1+3E_2$  & irred. unirat.\\\hline 
$25$ & $4$ & $\widehat{\E}_{25,4}^{(I)}$ & $2(3E_1+2E_2)$ & two  unirat. components \\
$25$ & $4$ & $\widehat{\E}_{25,4}^{(II)}$ & $2(3E_1+E_{1,2})$ & two  unirat. components \\
$25$ & $4$ & $\widehat{\E}_{25,4}^{(III)}$ & $5E_1+2E_2+2E_3$ & irred. unirat.\\
$25$ & $4$ & $\widehat{\E}_{25,4}^{(IV)}$ & $5E_1+2E_2+E_{1,2}$  & irred. unirat.\\\hline 
$25$ & $5$ & $\widehat{\E}_{25,5}^{(I)}$ & $4E_1+E_2+2E_{1,2}$ & irred. unirat. \\
$25$ & $5$ & $\widehat{\E}_{25,5}^{(II)}$ & $3E_{1}+2E_2+E_3+E_4+E_5$  & irred. unirat. \\
$25$ & $5$ & $\widehat{\E}_{25,5}^{(III)}$ & $4E_{1}+4E_2+E_3$  & irred. unirat.
\\\hline 
$25$ & $6$ & $\widehat{\E}_{25,6}^{(I)}$ & $4E_1+3E_{1,2}$ & irred. unirat. \\\hline
\end{tabular}    
}
\end{minipage}
\hfill
\begin{minipage}[t]{\textwidth/2-1mm}
\vspace{0mm}
\begin{flushright}
\resizebox{\textwidth-4mm}{!}{%
\begin{tabular}{ |l | l |l|l|l|l|}
\hline			
$g$ &  $\phi$  & comp.  & dec. type &  $\rho^{-1}$    
\\\hline\hline
$25$ & $6$ & $\widehat{\E}_{25,6}^{(II)}$ & $2(E_1+E_2+E_3+E_4)$ & two  unirat. components \\
$25$ & $6$ & $\widehat{\E}_{25,6}^{(III)}$ & $3E_{1,2}+E_1+E_2+E_3+E_4$ & irred. uniruled  \\\hline\hline

$26$ & $1$ & $\widehat{\E}_{26,1}$ & $25E_1+E_2$ & irred. unirat. \\\hline
$26$ & $2$ & $\widehat{\E}_{26,2}$ & $12E_1+E_2+E_3$ & irred. unirat. \\\hline
$26$ & $3$ & $\widehat{\E}_{26,3}$ & $8E_1+E_2+E_{1,3}$ & irred. unirat. \\\hline
$26$ & $4$ & $\widehat{\E}_{26,4}^{(I)}$ & $5E_1+2E_2+E_3+E_4$ & irred. unirat.\\
$26$ & $4$ & $\widehat{\E}_{26,4}^{(II)}$ & $6E_{1,2}+E_1+E_2$  &
irred. unirat.\\\hline 
$26$ & $5$ & $\widehat{\E}_{26,5}^{(I)}$ & $3E_1+E_2+E_3+E_4+E_{5}+E_6$ & ?? \\
$26$ & $5$ & $\widehat{\E}_{26,5}^{(II)}$ & $4E_1+2E_2+E_3+E_{1,4}$  & irred. unirat.\\
$26$ & $5$ & $\widehat{\E}_{26,5}^{(III)}$ & $5(E_1+E_2)$  & irred. unirat.\\\hline 

$26$ & $6$ & $\widehat{\E}_{26,6}^{(I)}$ & $3E_1+E_2+E_3+2E_{1,2}$  & irred. unirat.\\
$26$ & $6$ & $\widehat{\E}_{26,6}^{(II)}$ & $2E_1+2E_2+2E_3+E_4+E_5$  & irred. unirat.\\\hline\hline 

$27$ & $1$ & $\widehat{\E}_{27,1}$ & $26E_1+E_2$ & irred. unirat. \\\hline
$27$ & $2$ & $\widehat{\E}_{27,2}^{(I)}$ & $13E_1+E_{1,2}$ & irred. unirat. \\
$27$ & $2$ & $\widehat{\E}_{27,2}^{(II)}$ & $13E_1+2E_2$ & irred. unirat. \\\hline
$27$ & $3$ & $\widehat{\E}_{27,3}^{(I)}$ & $8E_1+E_2+E_{1,2}$ & irred. unirat.\\ 
$27$ & $3$ & $\widehat{\E}_{27,3}^{(II)}$ & $8E_1+2E_2+E_{3}$ & irred. unirat.\\ \hline 

$27$ & $4$ & $\widehat{\E}_{27,4}^{(I)}$ & $6E_1+2E_2+E_{1,3}$ & irred. unirat.\\
$27$ & $4$ & $\widehat{\E}_{27,4}^{(II)}$ & $5E_1+E_2+E_3+E_4+E_5$ & irred. unirat.\\\hline 

$27$ & $5$ & $\widehat{\E}_{27,5}^{(I)}$ & $4E_1+E_2+E_3+E_4+E_{1,5}$ & irred. uniruled   \\
$27$ & $5$ & $\widehat{\E}_{27,5}^{(II)}$ & $4E_{1}+3E_2+E_{1,2}$  & irred. unirat. \\
$27$ & $5$ & $\widehat{\E}_{27,5}^{(III)}$ & $4E_{1}+3E_2+2E_3$  & irred. unirat.
\\\hline 

$27$ & $6$ & $\widehat{\E}_{27,6}^{(I)}$ & $3E_1+2E_2+2E_{1,2}$ & irred. unirat.  \\
$27$ & $6$ & $\widehat{\E}_{27,6}^{(II)}$ & $3E_{1}+2E_2+2E_3+E_{1,4}$  & irred. unirat. \\
$27$ & $6$ & $\widehat{\E}_{27,6}^{(III)}$ & $2E_1+2E_2+E_3+E_4+E_5+E_{6}$ & ?? \\\hline\hline

$28$ & $1$ & $\widehat{\E}_{28,1}$ & $27E_1+E_2$ & irred. unirat. \\\hline
$28$ & $2$ & $\widehat{\E}_{28,2}$ & $13E_1+E_2+E_3$ & irred. unirat. \\\hline
$28$ & $3$ & $\widehat{\E}_{28,3}^{(I)}$ & $8E_1+E_2+E_3+E_4$ & irred. unirat. \\
$28$ & $3$ & $\widehat{\E}_{28,3}^{(II)}$ & $3(3E_1+E_2)$ & irred. unirat. \\\hline

$28$ & $4$ & $\widehat{\E}_{28,4}^{(I)}$ & $6E_1+3E_2+E_3$ & irred. unirat.\\
$28$ & $4$ & $\widehat{\E}_{28,4}^{(II)}$ & $6E_1+E_2+E_3+E_{1,4}$  & irred. unirat.\\\hline 

$28$ & $5$ & $\widehat{\E}_{28,5}^{(I)}$ & $5E_1+E_2+2E_{1,3}$ & irred. unirat.\\
$28$ & $5$ & $\widehat{\E}_{28,5}^{(II)}$ & $4E_1+2E_2+E_3+E_{1,2}$  & irred. unirat.\\
$28$ & $5$ & $\widehat{\E}_{28,5}^{(III)}$ & $4E_1+3E_2+E_3+E_4$  & irred. unirat.\\\hline 

$28$ & $6$ & $\widehat{\E}_{28,6}^{(I)}$ & $2E_1+E_2+E_3+E_4+E_5+E_6+E_7$  &  irred. uniruled  \\
$28$ & $6$ & $\widehat{\E}_{28,6}^{(II)}$ & $3(E_1+E_2+E_3)$  & irred. unirat.\\
$28$ & $6$ & $\widehat{\E}_{28,6}^{(III)}$ & $3E_1+2E_2+E_3+E_4+E_{1,5}$  & irred. uniruled \\\hline 
$28$ & $7$ & $\widehat{\E}_{28,7}$ & $3E_1+E_2+3E_{1,2}$  & irred. unirat.\\\hline\hline 

$29$ & $1$ & $\widehat{\E}_{29,1}$ & $28E_1+E_2$ & irred. unirat. \\\hline
$29$ & $2$ & $\widehat{\E}_{29,2}^{(I)}$ & $14E_1+E_{1,2}$ & irred. unirat. \\
$29$ & $2$ & $\widehat{\E}_{29,2}^{(II)}$ & $2(7E_1+E_2)$ & two  unirat. components \\\hline
$29$ & $3$ & $\widehat{\E}_{29,3}$ & $9E_1+E_2+E_{1,3}$ & irred. unirat. \\\hline 

$29$ & $4$ & $\widehat{\E}_{29,4}^{(I)}$ & $7E_1+4E_2$ & irred. unirat.\\
$29$ & $4$ & $\widehat{\E}_{29,4}^{(II)}$ & $7E_1+2E_{1,2}$ & irred. unirat.\\
$29$ & $4$ & $\widehat{\E}_{29,4}^{(III)}$ & $2(3E_1+E_{2}+E_3)$ & two  unirat. components \\
$29$ & $4$ & $\widehat{\E}_{29,4}^{(IV)}$ & $6E_1+2E_2+E_{1,2}$  & irred. unirat.  \\\hline 

$29$ & $5$ & $\widehat{\E}_{29,5}^{(I)}$ & $4E_1+2E_2+2E_3+E_4$ & irred. unirat. \\
$29$ & $5$ & $\widehat{\E}_{29,5}^{(II)}$ & $5E_{1,2}+E_1+E_2+E_3$  & irred. unirat. \\
$29$ & $5$ & $\widehat{\E}_{29,5}^{(III)}$ & $5E_{1}+3E_2+E_{1,3}$  & irred. unirat.
\\\hline 

$29$ & $6$ & $\widehat{\E}_{29,6}^{(I)}$ & $3E_1+E_2+E_3+E_4+E_5+E_{1,6}$ & ?? \\
$29$ & $6$ & $\widehat{\E}_{29,6}^{(II)}$ & $2(2E_1+E_2+E_{1,3})$ & two  unirat. components \\
$29$ & $6$ & $\widehat{\E}_{29,6}^{(III)}$ & $3E_1+3E_2+E_3+E_{1,2}$ & irred. unirat. \\\hline\hline

$30$ & $1$ & $\widehat{\E}_{30,1}$ & $29E_1+E_2$ & irred. unirat. \\\hline
$30$ & $2$ & $\widehat{\E}_{30,2}$ & $14E_1+E_2+E_3$ & irred. unirat. \\\hline
$30$ & $3$ & $\widehat{\E}_{30,3}^{(I)}$ & $9E_1+2E_2+E_{3}$ & irred. unirat. \\
$30$ & $3$ & $\widehat{\E}_{30,3}^{(II)}$ & $9E_1+E_2+E_{1,2}$ & irred. unirat. \\\hline

$30$ & $4$ & $\widehat{\E}_{30,4}^{(I)}$ & $6E_1+2E_2+E_3+E_4$ & irred. unirat.\\
$30$ & $4$ & $\widehat{\E}_{30,4}^{(II)}$ & $7E_{1,2}+E_1+E_2$  & irred. unirat.\\\hline 
$30$ & $5$ & $\widehat{\E}_{30,5}^{(I)}$ & $5E_1+E_2+2E_{1,2}$ & irred. unirat. \\
$30$ & $5$ & $\widehat{\E}_{30,5}^{(II)}$ & $4E_1+2E_2+E_3+E_4+E_5$  & irred. unirat.\\
$30$ & $5$ & $\widehat{\E}_{30,5}^{(III)}$ & $5E_1+4E_2+E_3$  & irred. unirat.\\\hline 

$30$ & $6$ & $\widehat{\E}_{30,6}^{(I)}$ & $4E_1+E_2+E_3+2E_{1,4}$  & irred. unirat.\\
$30$ & $6$ & $\widehat{\E}_{30,6}^{(II)}$ & $3E_1+2E_2+E_3+E_4+E_{1,2}$  & irred. unirat.\\
$30$ & $6$ & $\widehat{\E}_{30,6}^{(III)}$ & $3E_1+3E_2+2E_3+E_4$  & irred. unirat.\\
$30$ & $6$ & $\widehat{\E}_{30,6}^{(IV)}$ & $4E_{1,2}+E_1+E_2+2E_3$  & irred. unirat.\\\hline 

$30$ & $7$ & $\widehat{\E}_{30,7}^{(I)}$ & $2E_1+E_2+E_3+E_4+E_5+E_6+E_{1,7}$ & irred. unirat. (cf. Rem. \ref{rem:notunique})\\
$30$ & $7$ & $\widehat{\E}_{30,7}^{(II)}$ & $2E_1+4E_2+E_3+E_4+E_5$  & irred. unirat. \\\hline
\end{tabular}    
}
\end{flushright}
\end{minipage}

\end{document}